\theoremstyle{plain}
\newtheorem{theorem}{Theorem}[section]
\newtheorem{corollary}{Corollary}[section]
\newtheorem{lemma}{Lemma}[section]
\newtheorem{proposition}{Proposition}[section]
\theoremstyle{definition}
\newtheorem{definition}{Definition}[section]
\theoremstyle{remark}
\newtheorem{remark}{Remark}[section]
\def\8{\infty}
\def\Res{\mathop{\hbox{\rm Res}}}
\newcommand{\C}{\mathbb C}
\newcommand{\Z}{\mathbb Z}
\newcommand{\half}{
        {\lower0.00ex\hbox{\raise.6ex\hbox{\the\scriptfont0 1}
                           \kern-.5em\slash\kern-.1em\lower.45ex
                                     \hbox{\the\scriptfont0 2}}}}
\newcommand{\quarter}{
        {\lower0.00ex\hbox{\raise.6ex\hbox{\the\scriptfont0 1}
                           \kern-.5em\slash\kern-.1em\lower.45ex
                                     \hbox{\the\scriptfont0 4}}}}
\newcommand{\tquarter}{
        {\lower0.00ex\hbox{\raise.6ex\hbox{\the\scriptfont0 3}
                           \kern-.5em\slash\kern-.1em\lower.45ex
                                     \hbox{\the\scriptfont0 4}}}}
\newcommand{\eighth}{
        {\lower0.00ex\hbox{\raise.6ex\hbox{\the\scriptfont0 1}
                           \kern-.5em\slash\kern-.1em\lower.45ex
                                     \hbox{\the\scriptfont0 8}}}}
\newcommand{\othird}{
        {\lower0.00ex\hbox{\raise.6ex\hbox{\the\scriptfont0 1}
                           \kern-.5em\slash\kern-.1em\lower.45ex
                                     \hbox{\the\scriptfont0 3}}}}
\newcommand{\ddoAW}{\mathbb{D}}
\newcommand{\moAW}{\mathbb{M}}
\begin{document}

\begin{frontmatter}
\title{On a Family of Integrals that extend the Askey-Wilson Integral}

\author{M. Ito}
\address{School of Science and Technology for Future Life, Tokyo Denki University, Tokyo 101-8457, Japan}
\ead{mito@cck.dendai.ac.jp}

\author{N.S.~Witte}
\address{Department of Mathematics and Statistics,
University of Melbourne,Victoria 3010, Australia}
\ead{nsw@ms.unimelb.edu.au}

\begin{abstract}
We study a family of integrals parameterised by $ N = 2,3,\dots $ generalising the Askey-Wilson integral $ N=2 $ 
which has arisen in the theory of $q$-analogs of monodromy preserving deformations of linear 
differential systems and in theory of the Baxter $Q$ operator for the $ XXZ $ open quantum spin chain. 
These integrals are particular examples of moments defined by weights generalising the
Askey-Wilson weight and we show the integrals are characterised by various $ (N-1) $-th order linear $q$-difference 
equations which we construct. In addition we demonstrate that these integrals can be evaluated as a finite 
sum of $ (N-1) $ $ BC_{1} $-type Jackson integrals or $ {}_{2N+2}\varphi_{2N+1} $ basic hypergeometric functions. 
\end{abstract}

\begin{keyword}
non-uniform lattices, divided difference operators, orthogonal polynomials, semi-classical weights, Askey-Wilson polynomials,
basic hypergeometric functions, basic hypergeometric integrals
\MSC[2000]  42A52, 05E35, 33C45, 39A05, 33D15, 33D60
\end{keyword}

\end{frontmatter}

\section{Introduction}\label{Intro}
\setcounter{equation}{0}

The Askey-Wilson integral occupies an important position in the theory of $q$-integrals and
$q$-Selberg integrals because the underlying weight stands at the head of the Askey Table
of basic and ordinary hypergeometric function orthogonal polynomial systems. 
We recall the Askey-Wilson weight \cite{AW_1985} itself has four parameters 
$ \{a_1,\ldots,a_4\} $ with base $ q $
\begin{equation} 
  w(x) = w(x;\{a_1,\ldots,a_4\}) 
  = \frac{(z^2,z^{-2};q)_{\infty}}{\sin(\theta)\prod^{4}_{j=1}(a_jz,a_jz^{-1};q)_{\infty}} ,
    \quad z=e^{i\theta}, x=\tfrac{1}{2}(z+z^{-1})=\cos\theta \in \mathfrak{G}=(-1,1) .
\label{AWwgt}
\end{equation}
We also recall that the Askey-Wilson integral is defined by
\begin{equation*}
  I_{2}(a_1,a_2,a_3,a_4) 
  := \int_{\mathbb{T}} \frac{dz}{2\pi \sqrt{-1} z} \frac{(z^{\pm 2};q)_{\infty}}
                                   {\prod^{4}_{j=1}(a_{j}z^{\pm 1};q)_{\infty}} ,                                   
\end{equation*}
with $ |a_j|<1 $ for $ j=1,\ldots,4 $ and has the evaluation
\begin{equation*}
  I_{2}(a_1,a_2,a_3,a_4) 
   = 2\frac{(a_1a_2a_3a_4;q)_{\infty}}{(q;q)_{\infty}\prod_{1\leq j<k\leq 4}(a_{j}a_{k};q)_{\infty}} .
\end{equation*}
We employ the standard notations for the $q$ Pochhammer symbol, their products, the $q$-binomial coefficient
and the elliptic theta function
\begin{gather*}
  (a;q)_n = \prod^{n-1}_{k=0}(1-aq^{k}) ,\\ 
  (a;q)_{\infty} = \prod^{\infty}_{k=0}(1-aq^{k}) ,\\
  (a_1,a_2,a_3,\ldots ;q)_n = (a_1;q)_n(a_2;q)_n(a_3;q)_n\cdots ,\\
  (az^{\pm 1};q)_{\infty}=(az,az^{-1};q)_{\infty} ,\\
  \begin{bmatrix} n \\ m \end{bmatrix}_{q} = \frac{(q;q)_{n}}{(q;q)_{m}(q;q)_{n-m}} ,\quad m=0,1,\ldots, n \in \mathbb{N} ,\\
  \theta(z;q):=(z;q)_\8(qz^{-1};q)_\8 .
\end{gather*}
Also $ \mathbb{T}=\{z\in \mathbb{C}:|z|=1\} $ and we will invariably assume that
$ |q|<1 $, and other conditions on $ a_1, a_2, \ldots $ to ensure convergence of the
infinite products.

Our study treats a family of integrals that extend the Askey-Wilson integral in a direction not considered before.
The next member of such a family is (labelled as $ N=3 $ whereas the Askey-Wilson integral is labelled by $ N=2 $)
\begin{equation}
   I_{3}(a_1,\ldots,a_6) :=
   \int_{\mathbb{T}\cup\mathbb{H}}\frac{dz}{2\pi \sqrt{-1} z} \frac{z-z^{-1}}{z^{3/2}-z^{-3/2}}
   \frac{(z^{\pm 3};q^{3/2})_{\infty}}{\prod^{6}_{j=1}(a_j z^{\pm 1};q)_{\infty}} .
\label{N=3wgt}
\end{equation}
where the contour integral is the sum of a circular integral and a "tail" integral. This integrand possesses 
sequences of poles which accumulate at either $ z=\infty $ and $ z=0 $, so the contour $ \mathbb{T} $ can separate
these two sets if $ |a_j|<1 $ and circle the origin in an anti-clockwise direction, as occurs in the Askey-Wilson
case. Let us define $ a_j = q^{s_j} $, so that in the usual case $ |q|<1 $ we require also that $ {\rm Re}(s_j)>0 $
for $ j=1,\ldots, 6 $. However this integrand also has a branch cut, conventionally taken to be $ (-\infty,0) $
and so we take the contour path to be of a Hankel type with a tail $ \mathbb{H} $ starting at $ \infty e^{-\pi i} $
and finishing at $ \infty e^{+\pi i} $. This contribution involves a integral on $ [1,\infty) $ with an integrand
whose leading term as $ x \to \infty $ is
\begin{equation*}
   -\frac{1}{\pi}\exp\left( \frac{8\pi^2}{9\log q} \right)q^{\frac{3}{8}+\frac{1}{2}\sum^{6}_{j=1}s_j(s_j-1)}
    (1-x^{-2})x^{-3+\sum^{6}_{j=1}s_j}\prod^{6}_{j=1}\frac{(-q^{1-s_j}x^{-1};q)_{\infty}}{(-q^{s_j}x^{-1};q)_{\infty}} ,
\end{equation*}
where we have employed the asymptotic expansion of \cite{OD_1994}, which is valid for 
$ |{\rm arg}(a_j)| \leq \pi $ . This integral clearly converges under the additional condition that 
$ \sum^{6}_{j=1} {\rm Re}(s_j) < 2 $.
Clearly from the prefactors of the above expression the contribution from the tail
is exponentially small as $ q \to 1^{-} $.

Our aim, in fact, is to study the generalisations for $ N\geq 3 $, namely 
\begin{equation}
   I_{N} = I_{N}(a_1,\ldots,a_{2N}) :=
   \int_{\mathbb{T}\cup\mathbb{H}}\frac{dz}{2\pi \sqrt{-1} z} \frac{z-z^{-1}}{z^{N/2}-z^{-N/2}}
   \frac{(z^{\pm N};q^{N/2})_{\infty}}{\prod^{2N}_{j=1}(a_j z^{\pm 1};q)_{\infty}} .
\label{Witte_integral}
\end{equation}
Let us denote the integrand as $\Phi(z)$, which is the function defined as 
\begin{equation}
  \Phi(z) := \frac{z-z^{-1}}{z^{N/2}-z^{-N/2}}\frac{(z^{\pm N};q^{N/2})_\8}{\prod_{j=1}^{2N}(a_jz^{\pm 1};q)_\8} ,\quad
  I_N := \int_C \frac{dz}{2\pi \sqrt{-1}z}\Phi(z) , 
\label{Phidefn}
\end{equation}
where $C$ is as given earlier and depends on $N$ being odd or even. Clearly there is a distinction to be made
between the even and odd cases of $ N $ and the tail contribution is absent for $ N $ even. The tail
contribution involves a integral on $ [1,\infty) $ with an integrand whose asymptotic expansion as $ x \to \infty $
is now
\begin{align*}
&    \frac{1}{\pi}\sin \left(\tfrac{1}{2}\pi N \right) \exp\left( \frac{\pi^2(N^2-1)}{3N\log q} \right)q^{\frac{N}{8}+\frac{1}{2}\sum^{2N}_{j=1}s_j(s_j-1)}
    (1-x^{-2})x^{-N+\sum^{2N}_{j=1}s_j}\prod^{2N}_{j=1}\frac{(-q^{1-s_j}x^{-1})_{\infty}}{(-q^{s_j}x^{-1})_{\infty}} \\
& \quad    \times \exp \left\{ \vphantom{\frac{1}{k\sinh \left(\displaystyle \frac{2\pi^2k}{\displaystyle \log q} \right)}
              \left[ -\sum^{2N}_{j=1} \right]}
\sum^{\infty}_{k=1} \frac{1+(-1)^k}{k\sinh \left(\frac{\displaystyle 2\pi^2k}{\displaystyle N\log q} \right) }
	\cos \left(2\pi k\frac{\log x}{\log q} \right) \right.  \notag \\
& \qquad     \left.  
	+  \sum^{\infty}_{k=1} \frac{1}{k\sinh \left(\displaystyle \frac{2\pi^2k}{\displaystyle \log q} \right)}
              \left[ -\cos \left(2\pi k\frac{\log x}{\log q}\right)\sum^{2N}_{j=1}\cos 2\pi ks_j
                     +\sin \left(2\pi k\frac{\log x}{\log q}\right)\sum^{2N}_{j=1}\sin 2\pi ks_j \right] \right\} , \notag 
\end{align*}
where again we have employed the asymptotic expansion of \cite{OD_1994}, which is valid for 
$ |{\rm arg}(a_j)| \leq \pi $ . This integral clearly converges under the additional condition that 
$ \sum^{2N}_{j=1} {\rm Re}(s_j) < N-1 $.
There also exist a class of somewhat more general integrals, of which the above are particular cases,
which are the general solutions of a common system of linear divided-difference equations involving
the Askey-Wilson divided-difference operator. We shall argue that these integrals are a natural extension
of the Askey-Wilson system, on the basis of the theory underlying these integrals.
 
The weights \eqref{Witte_integral} were introduced in an insightful observation made by Ismail et al \cite{ILR_2004}
in their investigation of second order Sturm-Liouville equations in the Askey-Wilson divided-difference operators.
Their work demonstrated that a particular generalisation of the Askey-Wilson weight has profound applications
in the theory of quantum integrable models soluble using the Bethe-Ansatz, in this case the XXZ spin chain.
The one-dimensional $U_q(sl_2({\bf C}))$-invariant XXZ model of spin $\frac{1}{2}$ of a size $2N$ with the
open (Dirichlet) boundary conditions can be diagonalised using the {\it Quantum Inverse Scattering Method}
\cite{Sk_1988, KS_1991}, which requires the solutions $ \lambda_k $, $ k=1,\ldots, n $
of the following set of 
nonlinear equations, known as the {\it Bethe Ansatz Equations} (given here for the inhomogeneous chain)
\begin{equation}
   \prod_{l=1}^{2N}\frac{\sin(\lambda_k+s_l\eta)}{\sin(\lambda_k-s_l\eta) }
  = \prod_{j\neq k,j=1}^n \frac{\sin(\lambda_k+\lambda_j+\eta)\sin(\lambda_k-\lambda_j+\eta)}
                               {\sin(\lambda_k+\lambda_j-\eta)\sin(\lambda_k-\lambda_j-\eta)},
    \quad 1 \leq k \leq n, \quad n\in \mathbb{N} .
\label{BAeqn}
\end{equation}
Here the $s_l$'s are $ 2N $ complex numbers (the spins at site $l$) and $ q = e^{2{\rm i}\eta} $ is fixed by 
the XXZ anisotropy parameter $ \Delta = \frac{1}{2}(q+q^{-1}) $. However in \cite{ILR_2004} they construct the
Bethe Ansatz equations from entirely analytic arguments without reference to the quantum spin chain. Their key
result is the following statement.
\begin{theorem}[\cite{ILR_2004}]
Let $ a_l=q^{-s_l}=e^{-2{\rm i}\eta s_l}$ for $ 1 \leq l \leq 2N $ and $ \vec{a} = (a_1, \ldots, a_{2N}) $.
Consider the second order $q$-Sturm-Liouville equation 
\begin{equation}
  \Pi(x)\ddoAW^2_x p(x) + \Phi(x)(\moAW_x\ddoAW_x p)(x) = r(x)p(x) ,
\label{qS-Leqn}
\end{equation}
where $ \ddoAW_x, \moAW_x $ are the Askey-Wilson divided-difference operators defined subsequently by
\eqref{DDoperator:a}, \eqref{DDoperator:b} and the coefficients $ \Pi(x;\vec{a}), \Phi(x;\vec{a}) $ are
polynomials in $ x $ defined by the relations
\begin{gather*}
  \Pi(x;\vec{a}) = \frac{1}{w(x;\vec{a})}\moAW_x w(x;q^{1/2}\vec{a}) ,
\\
  \Phi(x;\vec{a}) =  \frac{1}{w(x;\vec{a})}\ddoAW_x w(x;q^{1/2}\vec{a}) ,
\end{gather*}
with the weight function $ w $ given by
\begin{equation} 
   w(x;\vec{a} ) := \frac{(e^{{\rm i}N\theta},e^{-{\rm i}N\theta};q^{\frac{N}{2}})_\infty}
                          {\sin \frac{N\theta}{2}\prod_{j=1}^{2N}(a_j e^{{\rm i}\theta},a_j e^{-{\rm i}\theta};q)_\infty} .
\label{qS-Lweight}
\end{equation}
Then polynomial solutions $ p(x) $ of \eqref{qS-Leqn}, with arbitrary $ {\rm deg}_x p=n\geq 0 $, possess
zeros at $ x_k=\cos 2\lambda_k $ where the phases $ \lambda_k $ satisfy the Bethe Ansatz equations \eqref{BAeqn}.
\end{theorem}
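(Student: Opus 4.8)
\noindent\emph{Proof strategy.} The plan is to recognise \eqref{qS-Leqn} as a disguised three-term $q$-difference equation for the trigonometric realisation of $p$, and then to read off \eqref{BAeqn} by the analytic Bethe--Ansatz mechanism of evaluating that equation at the zeros of $p$. Write $z=e^{{\rm i}\theta}$, $x=\cos\theta=\tfrac12(z+z^{-1})$ and $\breve f(z):=f(\tfrac12(z+z^{-1}))$, so $\breve f$ is invariant under $z\mapsto z^{-1}$; if $p$ has degree $n$ with zeros $x_k=\cos\theta_k$ and $z_k:=e^{{\rm i}\theta_k}$, then $\breve p(z)=\mathrm{const}\cdot z^{-n}\prod_{k=1}^{n}(z-z_k)(z-z_k^{-1})$. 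The first move is to cast \eqref{qS-Leqn} in self-adjoint (divergence) form: since, by hypothesis, $\Pi(x;\vec a)=w(x;\vec a)^{-1}\moAW_x w(\cdot;q^{1/2}\vec a)$ and $\Phi(x;\vec a)=w(x;\vec a)^{-1}\ddoAW_x w(\cdot;q^{1/2}\vec a)$, multiplying \eqref{qS-Leqn} through by $w(x;\vec a)$ and invoking the Leibniz-type rule $\ddoAW_x(fg)=(\ddoAW_xf)(\moAW_xg)+(\moAW_xf)(\ddoAW_xg)$ collapses its left-hand side, yielding
\begin{equation*}
  \ddoAW_x\!\big[\,w(\cdot;q^{1/2}\vec a)\,\ddoAW_x p\,\big](x)=r(x)\,w(x;\vec a)\,p(x) ,
\end{equation*}
the spectral term being carried along unchanged.

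Next I would evaluate this identity at $x=x_k=\cos 2\lambda_k$, a zero of $p$, assuming the configuration is non-degenerate ($p$ with simple zeros and $z_k^2\notin\{1,q,q^{-1}\}$). The right-hand side then vanishes, so by the definition of $\ddoAW_x$ the function $g:=w(\cdot;q^{1/2}\vec a)\,\ddoAW_x p$ obeys $\breve g(q^{1/2}z_k)=\breve g(q^{-1/2}z_k)$; writing $\breve g$ out and using $\breve p(z_k)=p(x_k)=0$ to kill the middle values leaves the Baxter $TQ$-type relation evaluated at a zero,
\begin{equation*}
  \frac{\breve p(q z_k)}{\breve p(q^{-1} z_k)}
   = -\,\frac{\breve w(q^{-1/2}z_k;q^{1/2}\vec a)}{\breve w(q^{1/2}z_k;q^{1/2}\vec a)}\cdot
       \frac{q^{1/2}z_k-q^{-1/2}z_k^{-1}}{q^{-1/2}z_k-q^{1/2}z_k^{-1}} .
\end{equation*}
On the left I would substitute the factored form of $\breve p$ and use the elementary identity (with $\theta_k=2\lambda_k$, $q=e^{2{\rm i}\eta}$, $a_l=e^{-2{\rm i}\eta s_l}$)
\begin{equation*}
  \frac{(qz_k-z_j)(qz_k-z_j^{-1})}{(q^{-1}z_k-z_j)(q^{-1}z_k-z_j^{-1})}
   = q^{2}\,\frac{\sin(\lambda_k-\lambda_j+\eta)\,\sin(\lambda_k+\lambda_j+\eta)}{\sin(\lambda_k-\lambda_j-\eta)\,\sin(\lambda_k+\lambda_j-\eta)} ;
\end{equation*}
the stray powers of $q$ then cancel, the left-hand side becomes the full product over $j=1,\dots,n$, and peeling off the $j=k$ term, which is $-\sin(2\lambda_k+\eta)/\sin(2\lambda_k-\eta)$, leaves $-\dfrac{\sin(2\lambda_k+\eta)}{\sin(2\lambda_k-\eta)}$ times the scattering product of \eqref{BAeqn}.

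For the right-hand side, $\dfrac{q^{1/2}z_k-q^{-1/2}z_k^{-1}}{q^{-1/2}z_k-q^{1/2}z_k^{-1}}=\dfrac{\sin(2\lambda_k+\eta)}{\sin(2\lambda_k-\eta)}$, while from \eqref{qS-Lweight} the weight ratio factorises as
\begin{equation*}
  \frac{\breve w(q^{-1/2}z_k;q^{1/2}\vec a)}{\breve w(q^{1/2}z_k;q^{1/2}\vec a)}
   = \mathcal E(z_k)\,\prod_{l=1}^{2N}\frac{1-a_l z_k^{-1}}{1-a_l z_k} ,
\end{equation*}
in which the elliptic factor $\mathcal E(z_k)$ is what is left of the block $(e^{\pm{\rm i}N\theta};q^{N/2})_\infty/\sin\tfrac{N\theta}{2}$ after the $q$-Pochhammers attached to the $a_l$ have been extracted. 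A short computation — telescoping the two $q^{N/2}$-Pochhammer ratios down to the prefactors $1-z_k^{\pm N}$ and cancelling against the $z_k^{\pm N/2}$ coming from $\sin\tfrac{N\theta_k}{2}$ — gives $\mathcal E(z_k)=z_k^{2N}$, which cancels exactly the $z_k^{-2N}$ hidden in $\prod_{l}(1-a_l z_k^{-1})/(1-a_l z_k)=z_k^{-2N}\prod_{l}\sin(\lambda_k+s_l\eta)/\sin(\lambda_k-s_l\eta)$. Thus the right-hand side equals $-\dfrac{\sin(2\lambda_k+\eta)}{\sin(2\lambda_k-\eta)}\prod_{l=1}^{2N}\dfrac{\sin(\lambda_k+s_l\eta)}{\sin(\lambda_k-s_l\eta)}$, and cancelling the common nonzero factor $\sin(2\lambda_k+\eta)/\sin(2\lambda_k-\eta)$ from the two sides produces exactly \eqref{BAeqn}.

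The step I expect to be the main obstacle is the algebra underlying the last two paragraphs: first, establishing the divergence-form identity for the generalised weight \eqref{qS-Lweight} — where the block $(e^{\pm{\rm i}N\theta};q^{N/2})_\infty/\sin\tfrac{N\theta}{2}$ replaces the Askey--Wilson $(e^{\pm2{\rm i}\theta};q)_\infty/\sin\theta$, and where the hypothesis that $\Pi,\Phi$ are polynomials enters — and, second, the collapse $\mathcal E(z_k)=z_k^{2N}$, in which the half-integer powers $z^{N/2}$ occurring for odd $N$ must be handled carefully, most cleanly by performing that computation in the variable $\theta$. One should also dispose, by a limiting argument, of the degenerate configurations excluded above ($p$ with a repeated zero, a zero with $z_k^2\in\{1,q^{\pm1}\}$, or a $q$-string $qz_k=z_j^{\pm1}$); these are non-generic and do not affect the conclusion.
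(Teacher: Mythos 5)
The paper does not prove this theorem: it is imported verbatim from the cited reference \cite{ILR_2004} and used only as motivation for studying the weight \eqref{qS-Lweight}, so there is no in-paper proof to compare against. Your proposal is, however, a correct and complete reconstruction of the standard argument (essentially the one in the cited source). I checked the key steps: the Leibniz rule $\ddoAW_x(fg)=(\ddoAW_xf)(\moAW_xg)+(\moAW_xf)(\ddoAW_xg)$ is an exact identity for \eqref{DDoperator:a}--\eqref{DDoperator:b}, so multiplying \eqref{qS-Leqn} by $w(x;\vec a)$ does collapse the left side to $\ddoAW_x[w(\cdot;q^{1/2}\vec a)\,\ddoAW_xp]$; evaluating at a simple zero $x_k$ with $z_k^2\neq1$ gives exactly your $TQ$-type relation with the stated sign; the factorisation $x-x_j=\tfrac{1}{2z}(z-z_j)(z-z_j^{-1})$ makes the left side $q^{-2n}$ times the product you write, and your trigonometric identity (with its $q^2$ per factor) is correct, as is the extraction of the $j=k$ term $-\sin(2\lambda_k+\eta)/\sin(2\lambda_k-\eta)$; on the right, $\prod_l(1-a_lz_k^{-1})/(1-a_lz_k)=z_k^{-2N}\prod_l\sin(\lambda_k+s_l\eta)/\sin(\lambda_k-s_l\eta)$ and the telescoping of the $(z^{\pm N};q^{N/2})_\infty/\sin\tfrac{N\theta}{2}$ block indeed yields $\mathcal E(z_k)=z_k^{2N}$ (the two $-z^N$ signs and the two $\sin\tfrac{N\theta}{2}$-type ratios cancel cleanly, with no residual half-integer-power ambiguity once written multiplicatively). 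The common factor $\sin(2\lambda_k+\eta)/\sin(2\lambda_k-\eta)$ then cancels to leave \eqref{BAeqn}. The only caveats are the degenerate configurations (repeated zeros, $z_k^2\in\{1,q^{\pm1}\}$, $q$-strings, or a zero colliding with a pole of the weight), which you have explicitly flagged and which are handled by genericity/continuity; so I see no gap.
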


The weight \eqref{qS-Lweight} is precisely the object of our study. The subject matter that is underlying this
result is the Heine-Stieltjes electrostatic interpretation of polynomial solutions to second order Sturm-Liouville
equations generalised from differential operators to divided-difference operators on quadratic lattices. In the
original setting the Sturm-Liouville equation, see for example Szeg\H{o}'s book {\it Orthogonal Polynomials}
\cite{ops_Sz} (p. 151, Chapter VI, Section 6.8), is
\begin{equation}
  \Pi(x)\frac{d^2 p}{dx^2}+\Phi(x)\frac{dp}{dx}+r(x)p = 0, 
\label{H-S} 
\end{equation}
where $\Pi(x)$ and $\Phi(x)$ are given polynomials of degrees $N$ and $N-1$. Heine \cite{HeineKF_1961}
(volume 1, pp. 472-479) proved that given any nonnegative integer $n$, there exists at most $ \binom{N+n-2}{N-2} $
choices of the polynomial $r(x)$ such that \eqref{H-S} has a polynomial solution with degree $ n $. Stieltjes
\cite{St_1885}, \cite{StieltjesOC_1993} (volume 1, pp. 434-439) continued this line of inquiry and 
showed that if one assumes that $\Pi(x)$ and $\Phi(x)$ have only real and simple zeros and their zeros interlace, 
then there are precisely $ \binom{N+n-2}{N-2} $ polynomials $r(x)$ which will permit a polynomial solution of 
\eqref{H-S} with degree $ n \in \mathbb{N} $. Whether or not such polynomial sequences form an orthogonal system is 
not essential for our purposes.
In the Sturm-Liouville problem where the coefficients $\Pi(x), \Phi(x), r(x)$ of \eqref{H-S} are polynomials in 
$x$ with the degrees $ {\rm deg}\,\Pi = 1+{\rm deg}\, \Phi = 2+{\rm deg}\, r = N \geq 2 $
a natural question arises: does replacing $\Pi_2, \Phi_1$ and $r_0$ (i.e. the classical case) by $\Pi_{N} $, 
$ \Phi_{N-1} $, and $r_{N-2}$ in \eqref{H-S} lead to more general orthogonal polynomials? The answer was given 
in the negative with Bochner's Theorem \cite{Bo_1929}. Another natural question also arises: is there a $q$ 
analogue of Bochner's Theorem? This again was answered in the negative by Gr\"unbaum and Haine \cite{GH_1996} who
proved that the only orthogonal polynomial solutions to the $q$-Sturm-Liouville equation after the replacements 
$(\Pi_2, \Phi_1,r_0) \to (\Pi_{N}, \Phi_{N-1}, r_{N-2})$ are the Askey-Wilson polynomials or special and limiting 
cases of them.

Conforming with standard notation \cite{GR_2004} we define the basic hypergeometric function 
$ {}_{r+1}\varphi_{r} $ by the series 
\begin{equation}
   {}_{r+1}\varphi_{r}\left[
                   \begin{array}{cccc}
                     a_{1}, & a_{2}, & \ldots, & a_{r+1} \\
                     b_{1}, & b_{2}, & \ldots, & b_{r} 
                   \end{array} ; q,z
                   \right] 
  = \sum^{\infty}_{n=0} \frac{(a_{1},a_{2},\ldots,a_{r+1};q)_n}{(q,b_{1},b_{2},\ldots,b_{r};q)_n} z^n ,
\label{phiDefn}  
\end{equation}
which is convergent for $ |z| < 1 $. 
The very-well-poised basic hypergeometric function $ {}_{r+1}W_{r} $ is a specialisation 
of the above
\begin{equation}
   {}_{r+1}W_{r}(a_1;a_4,a_5,\ldots,a_{r+1};q,z)
  = {}_{r+1}\varphi_{r}\left[
                   \begin{array}{cccccc}
                     a_{1}, & q\sqrt{a_{1}}, & -q\sqrt{a_{1}}, & a_4, & \ldots, & a_{r+1} \\
                     \sqrt{a_{1}}, & -\sqrt{a_{1}}, & qa_{1}/a_4, & \ldots, & qa_{1}/a_{r+1} & \\
                   \end{array} ; q,z
                   \right]  
\end{equation}
so that $ qa_{1} = a_{2}b_{1} = a_{3}b_{2} = \cdots = a_{r+1}b_{r} $ and 
$ a_{2} = q\sqrt{a_{1}} $, $ a_{3} = -q\sqrt{a_{1}} $.
The $ {}_{r+1}\varphi_{r} $  or $ {}_{r+1}W_{r} $ functions may be balanced, whereby $ z=q $ and
$ \prod^{r}_{j=1}b_{j} = q\prod^{r+1}_{j=1}a_j $. The bilateral basic hypergeometric function $ {}_{r}\psi_{r} $ is defined by
\begin{equation}
{}_{r}\psi_{r}
\left[\!\!\begin{array}{c}
		a_{1},a_{2},\ldots, a_{r}\\
		b_{1},b_{2},\ldots, b_{r} 
	\end{array}\!\!
	;q,z\right]
  := \sum_{n =-\infty }^{\infty}
	\frac{(a_1,a_2,\ldots,a_r;q)_n}
	{(b_1,b_2,\ldots,b_r;q)_n}z^n . 
\label{psiDefn}
\end{equation}
This new family of integrals differs from other generalisations of the Askey-Wilson integral, such as the
Nassrallah-Rahman integral \cite{NR_1985}, the integral representations of the very-well poised $ {}_8W_{7} $
or of the combination of two balanced, very-well-poised $ {}_{10}W_{9} $ functions \cite{Ra_1986b}, or of
the very-well-poised bilateral $ {}_{8}\psi_{8} $ \cite{Ra_1996} and
the elliptic $ {}_{12}V_{11} $ functions \cite{vBRS_2007,vBR_2009}.

The plan of our study is as follows: In \S \ref{OPS_NULonQuadLattice} we give a brief account of some definitions
and results for the discrete calculus on the $q$-quadratic lattice - the master case of the quadratic lattices in
the classification of Magnus \cite{Ma_1988} - which will require the introduction of the Askey-Wilson operators.
We then formulate a moment problem on the $q$-quadratic lattice and its associated orthogonal polynomial system, and 
this yields certain integral representations for the moments which form the subject of our study. The notion of a 
semi-classical weight is central here and we give a general, linear recurrence relation for the moments \eqref{Mrecur}
of such a system, first derived in \cite{Wi_2010a}. In \S \ref{differenceIN} we derive a $ (N-1) $-th order linear
$q$-difference equation for $ I_{N} $ using two different methods (see \eqref{I_q-DifferenceEqn} and \eqref{eq:g02-1}
respectively) one of which follows from the theory of \S \ref{OPS_NULonQuadLattice}. In addition we derive a
$ (N-1) $-th order mixed, linear partial $q$-difference equation for $ I_{N} $ (see \eqref{eq:IN-c} and
\eqref{eq:coeff-c}) and show that a vector of $q$-shifted integrals $ I_{N} $ satisfies a first order, linear matrix 
$q$-difference system (see \eqref{eq:1st-diff}). Finally we give in \S \ref{BC1evalIN} an explicit evaluation of
$ I_{N} $ as a sum of $ (N-1) $ $ BC_{1}$-type Jackson integrals or $ {}_{2N+2}\varphi_{2N+1} $ basic hypergeometric
functions (see \eqref{eq:IN03}).

\section{The Moment Problem for Semi-classical Weights on the \texorpdfstring{$q$}{q}-Quadratic Lattice}\label{OPS_NULonQuadLattice}
\setcounter{equation}{0}

Our theory will treat the case of the $q$-quadratic lattice and the Askey-Wilson
divided-difference calculus, and in order to simplify the description and to conform to
convention we will employ the canonical, that is to say the centred and symmetrised forms
of the lattice and the divided-difference operators.
Let us define the base $ q=e^{2i\eta} $ although we will not restrict ourselves to 
$q$-domains such as $ 0< \Re(q) < 1 $ except to avoid special degenerate 
cases and to ensure convergence.
Consider the projection map from unit circle $ \mathbb{T} \to [-1,1] $
\begin{equation*}
  z=e^{i\theta}, \quad \theta \in [-\pi,\pi) ,\qquad x=\tfrac{1}{2}(z+z^{-1})=\cos\theta \in [-1,1] . 
\end{equation*}
We denote the unit circle by $ \mathbb{T} $ and the unit open disc by $ \mathbb{D} $.
The inverse of the projection map defines a two-sheeted Riemann surface, one of which
corresponds to the interior of the unit circle and the other to the exterior. Thus we take
$ x $-plane to be cut along $ [-1,1] $ and will usually give results for the second sheet
i.e. when $ |x|\to \infty $ as $ z\to \infty $. To any function of $x$, say $f(x)$, we denote
the corresponding function of $z$ by $\tilde{f}(z) = f(x)$.
Define the shift operators $ E^{\pm}_{x} $ by
\begin{equation}
  E^{\pm}_{x}f(x) = f(\tfrac{1}{2}[q^{\pm 1/2}z+q^{\mp 1/2}z^{-1}]) ,
\label{DD_xshift}
\end{equation} 
and set $ y_{\pm} = E^{\pm}_{x} x $. 
This implies that 
\begin{gather*}
   y_{+}+y_{-} = (q^{1/2}+q^{-1/2})x = 2\cos\eta\; x ,
\\
   \Delta y := y_{+}-y_{-} = \tfrac{1}{2}(q^{1/2}-q^{-1/2})(z-z^{-1}) = -2\sin\eta \sin\theta ,
\\
   \Delta y^2 = (q^{1/2}-q^{-1/2})^2(x^2-1) ,
\\
   y_{+}y_{-} = x^2+\tfrac{1}{4}(q^{1/2}-q^{-1/2})^2 = x^2-\sin^2\eta .
\end{gather*} 
The Askey-Wilson divided-difference operators are defined as
\begin{align}
  \ddoAW_{x}f(x) & = \frac{E^{+}_{x}f(x)-E^{-}_{x}f(x)}{E^{+}_{x}x-E^{-}_{x}x} ,
\label{DDoperator:a} \\
  \moAW_{x}f(x) & = \tfrac{1}{2}\left[ E^{+}_{x}f(x)+E^{-}_{x}f(x) \right] .
\label{DDoperator:b}
\end{align} 
One has a parameterisation of the non-uniform $q$-quadratic lattice ($ \theta=2s\eta $, $ s\in \mathbb{Z} $)
\begin{gather*}
  x(s) = x_s = \tfrac{1}{2}(q^{s}+q^{-s}) = \cos(2\eta s) ,
 \\
  y_{\pm}(s) = \tfrac{1}{2}(q^{s\pm 1/2}+q^{-s\mp 1/2}) = \cos(2\eta [s\pm \tfrac{1}{2}]) .
\end{gather*}
Here the direct lattice is 
$ \mathfrak{G}[x(a)=\tfrac{1}{2}(a+a^{-1})] = \{\tfrac{1}{2}(q^{r/2}a+q^{-r/2}a^{-1}): r\in 2\Z\} $.
For $ |q|=1 $ and $ \eta $ not a rational multiple of $ \pi $ then the lattice densely fills the interval 
$ [-1,1] $. However if $ \eta $ is a rational multiple of $ \pi $ then one has the root of unity case 
$ q^N=1 $ and a finite lattice. We will restrict ourselves to the class of functions $ f $ where the sum 
over the $q$-quadratic lattice is defined and is equal to the Riemann-Stieltjes integral 
\begin{equation*}
	\int_\mathfrak{G}\ddoAW x\,f(x) := \sum_{s\in \mathbb{Z}} \Delta y(x_s)f(x_s) = \frac{\sin\eta}{\eta}\int^{1}_{-1}dx\,f(x) .
\end{equation*}

Let $ \{\phi_n(x;a)\}^{\infty}_{n=0} $ be a polynomial basis of $ L^2(w(x)\ddoAW x,\mathfrak{G}) $, 
where the support is $ \mathfrak{G} = \{E^{+k}_x x: k \in 2\Z \} $
and $ a $ denotes the parameter characterising the lattice. The Heine function basis 
\begin{equation*}
   \phi_n(x;a) = (az^{\pm 1};q)_n ,\quad a\neq 0, \infty ,\: n \in \mathbb{Z}_{\geq 0} ,
\end{equation*}
has the following properties:
\begin{enumerate}
\item[(1)]
$ \phi_n $ is of exact degree $ n $ with
\begin{equation*}
   \phi_n(x;a) = (-2a)^nq^{\frac{1}{2}n(n-1)}x^n + {\rm O}(x^{n-1}) ,
\end{equation*}  
\item[(2)]
$ \ddoAW_x $ is an exact lowering operator in this basis 
\begin{equation*}
   \ddoAW_{x} \phi_n(x;a) = -2a\frac{q^n-1}{q-1} \phi_{n-1}(x;q^{1/2}a) ,
\end{equation*}
\item[(3)]
$ \phi_n $ satisfies the linearisation formula
\begin{equation*}
   x\phi_n(x;a) = -\frac{1}{2aq^n}\phi_{n+1}(x;a)+\tfrac{1}{2}(aq^n+a^{-1}q^{-n})\phi_{n}(x;a) .
\end{equation*}
\end{enumerate}
A general solution to the first two requirements above is the following product expression
\begin{equation*}
   \phi_n(x;a) = (-2a)^nq^{\frac{1}{2}n(n-1)}\prod^{n-1}_{k=0}[x-(E^{+}_{x})^{2k} x(a)] ,
\end{equation*}
where the base point $ x(a) $ is parameterised by $ a $. We note that the limit $ \lim_{n\to\infty}\phi_n(x;a) $
exists for all $ a,x \in \C $ and $ |q|<1 $, and we denote this by $ \phi_{\infty}(x;a) $.
This enables us to define an analytic continuation of $ \phi_n(x;a) $ beyond positive integer $ n $
\begin{equation}
  \phi_r(x;a) = \frac{(az^{\pm 1};q)_{\infty}}{(aq^rz^{\pm 1};q)_{\infty}} ,
\end{equation} 
for all $ r\in\C $ but subject to $ |q|<1 $. We record for subsequent use the action of the divided-difference
operators on the basis functions
\begin{align}
   \ddoAW_{x} \phi_r(x;a)
 & = 2a\frac{1-q^r}{q-1}\phi_{r-1}(x;q^{1/2}a) ,
\label{DDO_basis:a}
 \\
   \moAW_{x} \phi_r(x;a)
 & = \tfrac{1}{2}(1+q^{-r})\phi_r(x;q^{1/2}a)+\tfrac{1}{2}(1-q^{-r})(1-a^2q^{2r-1})\phi_{r-1}(x;q^{1/2}a) , 
\label{DDO_basis:b}
 \\
   \ddoAW_{x} \phi_{\infty}(x;a)
 & = \frac{2a}{q-1}\phi_{\infty}(x;q^{1/2}a) ,
\label{DDO_basis:c}
 \\
   \moAW_{x} \phi_{\infty}(x;a)
 & = \phi_{\infty}(x;q^{-1/2}a) .
\label{DDO_basis:d}
\end{align}

Our study requires a revision of a number of standard results in orthogonal polynomial theory \cite{ops_Sz}, 
\cite{ops_Fr}, \cite{Ismail_2005} so we briefly recount the formulation given in \cite{Wi_2010a}. The latter 
work treats the theory of $\delta$-analogs and $q$-analogs for monodromy preserving deformations of linear differential systems
which is not of direct concern here, however much of the foundational material is still relevant.
A number of these results have been found by earlier studies, most notably by the Soviet school of Nikiforov,
Suslov and Uvarov \cite{NSU_1984,NS_1985,NSU_1986,NS_1986a,NS_1986b}, who were primarily concerned with 
hypergeometric type orthogonal polynomial system on non-uniform lattices and in the 1988 work of Magnus \cite{Ma_1988}. 
Consider the general {\it orthogonal polynomial system} $ \{p_n(x)\}^{\infty}_{n=0} $ defined by the orthogonality 
relations
\begin{equation}
  \int_{\mathfrak{G}} \ddoAW x\; w(x)p_n(x)\,\phi_m(x;a) = \begin{cases} 0, \quad 0\leq m<n \\ h_n(a), \quad m=n \end{cases},
  \quad n \geq 0 ,\quad a \neq 0, \infty ,
\label{ops_orthog}
\end{equation}
with $ \mathfrak{G} $ denoting the support of the {\it weight} $ w(x) $. 
Our system of orthogonal polynomials have a distinguished singular point at $ x=\infty $ and possess
expansions about this point which can characterise solutions uniquely. This is related to the fact that 
orthogonal polynomials are the denominators of single point Pad\'e approximants and that point is conventionally 
set at $ x=\infty $. We give special notation for the coefficients of $ x^{n} $ and $ x^{n-1} $ in $ p_n(x) $,
\begin{equation*}
   p_n(x) = \gamma_n x^{n} + \gamma_{n,1}x^{n-1} + \ldots, \quad n \geq 0 ,\quad \gamma_{0,1}=0 .
\label{ops_poly}
\end{equation*}
A consequence of the orthogonality relation is the three term recurrence relation
\begin{equation*}
   a_{n+1}p_{n+1}(x) = (x-b_n)p_n(x) - a_np_{n-1}(x), \quad n \geq 0,
\label{ops_threeT}
\end{equation*}
and we consider the sequence of orthogonal polynomials generated from the initial values $ p_{-1} = 0 $ and 
$ p_0 = \gamma_0 $. The three term recurrence coefficients are related to the leading and sub-leading
polynomial coefficients by \cite{ops_Sz}, \cite{ops_Fr}
\begin{equation*}
   a_n = \frac{\gamma_{n-1}}{\gamma_n}, \quad
   b_n = \frac{\gamma_{n,1}}{\gamma_n}-\frac{\gamma_{n+1,1}}{\gamma_{n+1}}, \quad n \geq 1 ,\quad
   b_0 = -\frac{\gamma_{1,1}}{\gamma_{1}} .
\label{ops_coeffRn}
\end{equation*}

The orthogonality relation (\ref{ops_orthog}) is derived from the linear functional on the space of polynomials
\begin{equation*}
    {\mathcal L}: p\in \Pi \mapsto \C ,
\label{ops_linearF}
\end{equation*}
and we employ our basis polynomials as an expansion basis although not necessarily with the same parameter 
used in \eqref{ops_orthog}
\begin{equation*}
   p_n(x) = \sum^{n}_{k=0} c_{n,k}(b)\phi_k(x;b)
  \quad n \geq 0 .
\label{ops_expand}
\end{equation*}
Consequently we define the {\it moments} $ \{m_{j,k}\}_{j,k=0,1,\ldots,\infty} $ of the weight as the 
action of this functional on products of the basis polynomials, defined as
\begin{equation}
   m_{j,k} = m_{j,k}(a,b) := \int_{\mathfrak{G}} \ddoAW x\; w(x)\,\phi_{j}(x;a)\phi_{k}(x;b) ,
  \quad j,k \geq 0 .
\label{ops_moment}
\end{equation}
The relevance of the moment problem to the associated orthogonal polynomial system arises through the 
{\it moment determinants}
\begin{equation*}
   \Delta_n(a,b) := \det[ m_{j,k}(a,b) ]_{j,k=0,\ldots,n-1}, \quad n\geq 1,
   \quad \Delta_0 := 1 ,
\label{ops_Hdet}
\end{equation*}
and for $  n\geq 1, j=0,\ldots n-1 $
\begin{equation*}
   \Sigma_{n,j}(a,b) := \det\left(
               \begin{array}{ccccccc}
               m_{0,0}(a,b)   & \cdots & m_{0,j-1}(a,b)   & []     & m_{0,j+1}(a,b)   & \cdots & m_{0,n}(a,b)   \\
               \vdots    & \vdots & \vdots      & \vdots & \vdots      & \cdots & \vdots    \\
               m_{n-1,0}(a,b) & \cdots & m_{n-1,j-1}(a,b) & []     & m_{n-1,j+1}(a,b) & \cdots & m_{n-1,n}(a,b) \\
               \end{array} \right) ,
\label{ops_Sdet}
\end{equation*}
defined in terms of the moments above. Obviously $ \Delta_n = \Sigma_{n,n} $ and we set $ \Sigma_{0,0} := 0 $.
The expansion coefficients $ \{c_{n,j}\}^{n}_{j=0} $ are given in terms of these determinants by
\begin{equation*}
   c_{n,j}(b) = (-)^{n+j}h_n(a)\frac{\Sigma_{n,j}(a,b)}{\Delta_{n+1}(a,b)} ,\quad
   c_{n,n}(b) = h_n(a)\frac{\Delta_{n}(a,b)}{\Delta_{n+1}(a,b)} .
\label{ops_expCff}
\end{equation*} 
It follows from (\ref{ops_orthog}) that
\begin{equation*}
  \int_{\mathfrak{G}} \ddoAW x\; w(x)[p_n(x)]^2 = c_{n,n}(a)h_n(a),
  \quad n \geq 0 ,
\end{equation*}
and thus for $ p_n(x) $ to be normalised as well as orthogonal we set $ c_{n,n}(a)h_n(a)=1 $. We also have
moment determinant representations of the polynomials
\begin{equation*}
   p_{n}(x) = \frac{c_{n,n}(b)}{\Delta_{n}(a,b)}
               \det\left(
               \begin{array}{cccccc}
               m_{0,0}(a,b)   & \cdots & m_{0,j}(a,b)   & \cdots & m_{0,n}(a,b)   \\
               \vdots    & \vdots & \vdots    & \cdots & \vdots    \\
               m_{n-1,0}(a,b) & \cdots & m_{n-1,j}(a,b) & \cdots & m_{n-1,n}(a,b) \\
               \phi_{0}(x;b)     & \cdots & \phi_{j}(x;b)     & \cdots & \phi_{n}(x;b)     \\
               \end{array} \right) , \quad n\geq 0.
   \label{ops_polyDet}
\end{equation*}
The normalisation and the three-term recurrence coefficients are related to these determinants by
\begin{alignat*}{2}
   \gamma_n^2 & =   (4ab)^{n} q^{n(n-1)}    \frac{\Delta_{n}(a,b)}{\Delta_{n+1}(a,b)}, & \quad n & \geq 0 ,
%   \label{ops_gammaDelta}
\\
   a^2_n & = \frac{1}{4ab q^{2n-2} } \frac{\Delta_{n+1}(a,b)\Delta_{n-1}(a,b)}{\Delta^2_{n}(a,b)} , & \quad n &\geq 1 ,
   \label{ops_aSQDelta}\\
   b_n & = \tfrac{1}{2} (a q^{n} + a^{-1}q^{-n} ) - \frac{1}{2 a q^{n} } \frac{\Sigma_{n+1,n}(a,b)}{\Delta_{n+1}(a,b)}
 + \frac{1}{2 a q^{n-1} }  \frac{\Sigma_{n,n-1}(a,b)}{\Delta_n(a,b)} , & \quad n & \geq 0 . 
%   \label{ops_bDelta}
\end{alignat*}
Naturally $ p_{n} $, the normalisation and three-term recurrence coefficients are independent of the choices of 
$ a, b $ as can be easily verified from their determinantal definitions given previously.

We also need the definition of the {\it Stieltjes function}
\begin{equation*}
   f(x) \equiv \int_{\mathfrak{G}} \ddoAW y\;\frac{w(y)}{x-y} , \quad x \notin \mathfrak{G} ,
\label{ops_stieltjes}
\end{equation*}
which is a moment generating function in the following sense - it has the following formal expansion as 
$ x \to \infty $ with $ x \neq x_k(b) $, for $ k\in 2\mathbb{Z} $ (i.e. $ \phi_{k}(x;b)\neq 0 $, $ \phi_{\infty}(x;b)\neq 0 $)
and $ b \neq 0, \infty $
\begin{gather}
  f(x) = \frac{f_{\infty}(x;b)}{\phi_{\infty}(x;b)}-2b\sum^{\infty}_{n=0}\frac{q^n}{\phi_{n+1}(x;b)}m_{0,n}(.,b) , \quad x \not \in \mathfrak{G},
\label{xLarge_SF:a} \\
\noalign{\hbox{where}}
  f_{\infty}(x;b) = \int_{\mathfrak{G}} \ddoAW y\;w(y)\frac{\phi_{\infty}(y;b)}{x-y} .
\label{xLarge_SF:b}
\end{gather}
Thus the Stieltjes function splits into two parts - one part being a series with inverse basis polynomials and
a remainder which may be absent for some cases. The Stieltjes function can also be generalised thus
\begin{equation*}
    f_{j}(x;a) := \int_{\mathfrak{G}} \ddoAW y\; w(y)\frac{\phi_j(y;a)}{x-y}, \quad j \geq 1 ,\quad f_0=\phi_0 f .
\label{ops_}
\end{equation*}
Likewise this function has a generating function expansion analogous to (\ref{xLarge_SF:a})
\begin{equation*}
   f_j(x;a) = \frac{f_{j,\infty}(x;a,b)}{\phi_{\infty}(x;b)} - 2b \sum^{\infty}_{n=0} \frac{q^{n}}{\phi_{n+1}(x;b)}m_{j,n}(a,b),
   \quad x \notin \mathfrak{G},\quad x \to \infty .
\label{Sfj_expand}
\end{equation*}

The first crucial structure in our description is the notion of the {\it $\ddoAW$-semi-classical weight},
as given by the following definition of Magnus \cite{Ma_1988}.
\begin{definition}[\cite{Ma_1988}]\label{spectral_DD}
The {\it $\ddoAW$-semi-classical weight} satisfies a first order divided-difference equation
\begin{equation}
    W\ddoAW_{x} w = 2V\moAW_{x} w ,
\label{spectral_DD_wgt:a}
\end{equation}
or equivalently
\begin{equation}
    \frac{w(y_+)}{w(y_-)} = \frac{W+\Delta yV}{W-\Delta yV}(x) ,
\label{spectral_DD_wgt:b}
\end{equation}
with $ W(x),V(x) $ being irreducible polynomials in $ x $, which we will call {\it spectral data polynomials}.
Furthermore we assume $ W\pm \Delta yV \neq 0 $ for all $ x \in \mathfrak{G} $. For minimal degrees of $ W, V $ this 
is the analogue of the Pearson equation. We shall see that the minimal degrees in the generic case are
$ {\rm deg}_{x}W = 2 > {\rm deg}_{x}V = 1 $, i.e. the $ N=2 $ Askey-Wilson situation.
\end{definition}

\begin{remark}
In a series of works Suslov and collaborators \cite{AS_1988,Su_1989,AS_1990,RS_1994a,RS_1994b} have studied
the discrete analogs of the Pearson equation for all of the lattices admissible in the classification and
sought solutions for the weight functions given suitable polynomials for $ W, V $. In terms of our own variables
those found in \cite{Su_1989} are given by
\begin{equation*}
  \sigma = E^{-}_x(W-\Delta yV), \qquad
  \tau = \frac{E^{+}_x(W+\Delta yV)(x)-E^{-}_x(W-\Delta yV)(x)}{\Delta y} .
\end{equation*}
\end{remark}

From (\ref{xLarge_SF:a}) we recognise $ f(x) $ as a moment generating function and a key element in our theory
are the systems of divided-difference equations satisfied by the moments. Before we state such systems we
need to note the following preliminary result.
\begin{lemma}[\cite{Su_1989},\cite{Wi_2010a}]
Let $ \phi_k(x;a) $ be a canonical basis polynomial. For any $ k\in\mathbb{Z}_{\geq 0} $ and 
$ a\in\mathbb{C}\backslash\{0,\infty\} $ the $\ddoAW$-semi-classical weight satisfies the integral equation
\begin{equation}
   \int_{\mathfrak{G}}\ddoAW{x}\,\frac{1}{\Delta y}
   E^{-}_x\Big( w(x) \left[ E^{+}_x \phi_k(x;a)(W+\Delta yV)(x)-E^{-}_x \phi_k(x;a)(W-\Delta yV)(x) \right] \Big)
   = 0 . 
\label{SCwgtEqn}
\end{equation}
\end{lemma}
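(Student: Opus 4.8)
\emph{Proof sketch (proposed).} The lemma is a discrete summation-by-parts statement, and the plan is to reduce it term by term to the Pearson-type equation \eqref{spectral_DD_wgt:b}. First I would unfold the left-hand side with the defining formula $\int_\mathfrak{G}\ddoAW x\,f(x)=\sum_{s\in\mathbb{Z}}\Delta y(x_s)f(x_s)$. Since the prefactor $1/\Delta y$ is a function of the integration variable, it cancels the lattice measure $\Delta y(x_s)$ (the isolated lattice points at which $\Delta y(x_s)=0$ contribute nothing and are simply dropped), so the left-hand side collapses to the bare series $\sum_{s\in\mathbb{Z}}G\big(y_-(x_s)\big)$, where $G(x):=w(x)\big[E^{+}_x\big(\phi_k(x;a)(W+\Delta yV)(x)\big)-E^{-}_x\big(\phi_k(x;a)(W-\Delta yV)(x)\big)\big]$. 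From \eqref{DD_xshift} one reads off $y_-(x_s)=x_{s-1/2}$, that $E^{+}_x$ resp.\ $E^{-}_x$ evaluated at $x_{s-1/2}$ carries the argument to $x_s$ resp.\ $x_{s-1}$, and that $\Delta y$ at $x_{s-1/2}$ is $x_s-x_{s-1}$; hence the series becomes $\sum_{s}w(x_{s-1/2})\big[\phi_k(x_s;a)(W+\Delta yV)(x_s)-\phi_k(x_{s-1};a)(W-\Delta yV)(x_{s-1})\big]$.

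The heart of the argument is a single relabelling of the summation index. Write the series as $\sum_s A_s-\sum_s B_s$, with $A_s:=w(x_{s-1/2})\,\phi_k(x_s;a)\,(W+\Delta yV)(x_s)$ and $B_s:=w(x_{s-1/2})\,\phi_k(x_{s-1};a)\,(W-\Delta yV)(x_{s-1})$, and replace $s$ by $s+1$ in the second sum, so that $B_{s+1}=w(x_{s+1/2})\,\phi_k(x_s;a)\,(W-\Delta yV)(x_s)$. Then $A_s-B_{s+1}=\phi_k(x_s;a)\big[w(x_{s-1/2})(W+\Delta yV)(x_s)-w(x_{s+1/2})(W-\Delta yV)(x_s)\big]$, and the bracket vanishes by \eqref{spectral_DD_wgt:b} in its cleared-denominator form $w(y_+)(W-\Delta yV)=w(y_-)(W+\Delta yV)$, evaluated at $x=x_s$ (where $y_{\pm}(x_s)=x_{s\pm 1/2}$). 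Hence $A_s=B_{s+1}$ for every $s$, so after the relabelling every summand vanishes and the series is $0$. Equivalently, setting $F(x):=w(x)\,E^{-}_x\big(\phi_k(x;a)(W-\Delta yV)(x)\big)$ one has $B_s=F(x_{s-1/2})$ and, now using \eqref{spectral_DD_wgt:b}, $A_s=F(x_{s+1/2})$, so the series is $\sum_s\big[F(x_{s+1/2})-F(x_{s-1/2})\big]=\int_\mathfrak{G}\ddoAW x\,\ddoAW_x F$, which vanishes by the fundamental theorem of the $\ddoAW$-calculus; I would keep whichever of these two phrasings sits most naturally beside \eqref{DDoperator:a}--\eqref{DDoperator:b}.

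The one place I expect real care to be needed is the legitimacy of the relabelling $\sum_s B_s=\sum_s B_{s+1}$ (equivalently, the telescoping), which requires absolute convergence of the series, i.e.\ $B_s\to 0$ as $s\to\pm\infty$. This is exactly where the hypotheses enter: $\phi_k(\cdot;a)$, the spectral data polynomials $W,V$, and $\Delta y$ grow only polynomially along $\mathfrak{G}$, while a $\ddoAW$-semi-classical weight $w$ of the type under consideration decays faster than any polynomial at the accumulation points of $\mathfrak{G}$ (for $|q|<1$, at $x=\infty$); the condition $W\pm\Delta yV\neq 0$ on $\mathfrak{G}$ in Definition~\ref{spectral_DD} ensures $w$ is well defined along the whole lattice. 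When $|q|=1$ and $\mathfrak{G}$ is dense, one instead invokes the standing assumption that the lattice sum exists and coincides with the convergent Riemann--Stieltjes integral, which again licenses the manipulation. Granting these convergence provisos, \eqref{SCwgtEqn} follows.
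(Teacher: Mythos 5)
Your proof is correct. Note that the paper itself offers no proof of this lemma --- it is quoted from Suslov and from \cite{Wi_2010a} --- so there is nothing internal to compare against; your summation-by-parts argument (unfold the lattice sum, cancel $1/\Delta y$ against the measure, shift the index in the $E^{-}_x$ term, and invoke the Pearson relation \eqref{spectral_DD_wgt:b} in the cleared form $w(y_+)(W-\Delta yV)(x)=w(y_-)(W+\Delta yV)(x)$ to make the series telescope) is the standard route and each step checks out. Two small remarks. First, you have implicitly resolved an ambiguity in the statement by reading $E^{\pm}_x$ as acting on the entire product $\phi_k(x;a)(W\pm\Delta yV)(x)$ rather than on $\phi_k$ alone; this is the correct reading (it is the one consistent with the expansions \eqref{Sum_Bexp}--\eqref{Diff_Bexp} and with Corollary~\ref{momentRecur}, and the alternative reading would not telescope), but it is worth stating explicitly. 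Second, your identification of the convergence/decay of $w$ along the lattice as the only genuinely delicate point is exactly right, and your appeal to the paper's standing assumption that the lattice sum exists and equals the Riemann--Stieltjes integral is the appropriate way to discharge it.
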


An equivalent equation for general quadratic lattices has been derived by Suslov \cite{Su_1989} in the special
case of $ k=0 $ of (\ref{SCwgtEqn}).
From the above result we see the need to express $ W\pm\Delta yV $ in terms of canonical basis polynomials 
and therefore make two definitions in the following way
\begin{gather} 
E^{+}_x(W+\Delta yV)(x)+E^{-}_x(W-\Delta yV)(x) =: 2\sum^{N}_{l=0}f_{N,l}(a)\phi_l(x;a) =: H(x) ,
\label{Sum_Bexp}
  \\
E^{+}_x(W+\Delta yV)(x)-E^{-}_x(W-\Delta yV)(x) =: (z-z^{-1}) \sum^{N-1}_{l=0}g_{N,l}(a)\phi_l(x;a) =: (z-z^{-1})G(x) .
\label{Diff_Bexp}
\end{gather} 
Here the cutoff in the sums are determined by $ {\rm deg}_{x}W = N $, $ {\rm deg}_{x}V = N-1 $.

\begin{corollary}[\cite{Wi_2010a}]\label{momentRecur}
Let us assume that $ a_j \neq a_i $. For all $ k,l \in\mathbb{Z}_{\geq 0} $ the moments $ m_{k,l}(a_j,a_i) $ 
are characterised by the linear recurrence relations
\begin{multline}
   \tfrac{1}{2}(1+q^{-k})\sum^{N-1}_{l=0} g_{N,l}(a_i)m_{k,l}(a_j,a_i)
  \\
  +q^{-1}(1-q^k)a_j\sum^{N}_{l=0} \left[ f_{N,l}(a_i)+\tfrac{1}{2}(q^{k-1}a_j-q^{-k+1}a_j^{-1})g_{N,l}(a_i) \right]
                 m_{k-1,l}(a_j,a_i)
  = 0 .
\label{Mrecur}
\end{multline}
For $ k=0 $ the second term is absent. In the generic case $ N-1 $ initial values of the $ m_{k,l} $ need to
be specified.
\end{corollary}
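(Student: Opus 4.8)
The plan is to derive \eqref{Mrecur} directly from the integral identity \eqref{SCwgtEqn} of the preceding Lemma, by substituting the Heine-basis expansions \eqref{Sum_Bexp}--\eqref{Diff_Bexp}, applying the known action of the Askey--Wilson operators on the basis, and matching the resulting vanishing integrand against the moment definition \eqref{ops_moment}.

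First I would choose the free parameter in $\phi_k(x;a)$ in \eqref{SCwgtEqn} to be the appropriate $q^{1/2}$-multiple of $a_j$; this is forced, since both $\ddoAW_x$ and $\moAW_x$ send $a\mapsto q^{1/2}a$ in \eqref{DDO_basis:a}--\eqref{DDO_basis:b}, so that after the manipulation the first basis factor is exactly $\phi_k(x;a_j)$ or $\phi_{k-1}(x;a_j)$. Writing $E^{\pm}_{x}\phi_k=\moAW_x\phi_k\pm\tfrac{1}{2}\Delta y\,\ddoAW_x\phi_k$ inside the bracket of \eqref{SCwgtEqn} splits it (up to the common factor $\Delta y$) into a piece $2V\,\moAW_x\phi_k$ and a piece $W\,\ddoAW_x\phi_k$; evaluating $\moAW_x\phi_k$ and $\ddoAW_x\phi_k$ via \eqref{DDO_basis:a}--\eqref{DDO_basis:b} then produces precisely the scalar prefactors $\tfrac{1}{2}(1+q^{-k})$, $q^{-1}(1-q^k)a_j$ and $\tfrac{1}{2}(q^{k-1}a_j-q^{-k+1}a_j^{-1})$ that appear in \eqref{Mrecur}. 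Since $1-q^k$ vanishes at $k=0$ and the lowering term $\phi_{-1}$ is then absent, the second sum drops out in that case, as stated.

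Next I would invoke \eqref{Sum_Bexp}--\eqref{Diff_Bexp}, which are set up exactly for this substitution, to replace the residual polynomial factors $W\pm\Delta yV$ (suitably shifted by the outer $E^{-}_{x}$) by their finite $\phi_l(\cdot\,;a_i)$-expansions $2\sum_{l=0}^{N}f_{N,l}(a_i)\phi_l(x;a_i)$ and $(z-z^{-1})\sum_{l=0}^{N-1}g_{N,l}(a_i)\phi_l(x;a_i)$. The extra factor $z-z^{-1}$ cancels against the $1/\Delta y$ in the integrand, because $\Delta y=\tfrac{1}{2}(q^{1/2}-q^{-1/2})(z-z^{-1})$, so that the whole integrand of \eqref{SCwgtEqn} becomes a finite linear combination of $w(x)\phi_k(x;a_j)\phi_l(x;a_i)$ and $w(x)\phi_{k-1}(x;a_j)\phi_l(x;a_i)$. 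Integrating term by term over $\mathfrak{G}$ and using \eqref{ops_moment} turns the vanishing of the left-hand side of \eqref{SCwgtEqn} into the recurrence \eqref{Mrecur}; the hypothesis $a_j\neq a_i$ keeps the two parameter families genuinely distinct, so the two indices do not merge. The count of $N-1$ initial values then follows by observing that \eqref{Mrecur} is first order in $k$ yet couples $N$ consecutive second-index moments at level $k$ to $N+1$ at level $k-1$; eliminating the top second-index moments via the linearisation formula (property~(3) of the Heine basis) reduces the doubly-indexed array to a scalar recurrence of order $N-1$.

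The step I expect to be the real work is the second one: distributing the shift operators $E^{\pm}_{x}$ across the product $w\cdot\phi_k\cdot(W\pm\Delta yV)$, commuting the outer $E^{-}_{x}$ and the $1/\Delta y$ past the $z-z^{-1}$ produced by \eqref{Diff_Bexp}, and checking that all the spurious half-shifts $q^{\pm1/2}$ in the parameters — and any stray Jacobian-type factor arising from $E^{-}_{x}$ acting on $w\,\Delta y$ — cancel, so that one lands exactly on $m_{k,l}(a_j,a_i)$ and $m_{k-1,l}(a_j,a_i)$ with the coefficients displayed in \eqref{Mrecur}. Granting \eqref{SCwgtEqn} and the basis identities \eqref{DDO_basis:a}--\eqref{DDO_basis:b}, the rest is bookkeeping.
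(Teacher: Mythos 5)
Your derivation is correct and follows exactly the route the paper indicates: the paper's own ``proof'' is a one-line citation of Corollary 4.1 of \cite{Wi_2010a}, which is itself obtained from \eqref{SCwgtEqn} together with the expansions \eqref{Sum_Bexp}--\eqref{Diff_Bexp} and the operator actions \eqref{DDO_basis:a}--\eqref{DDO_basis:b}, precisely as you describe. The coefficients do check out --- with base parameter $q^{-1/2}a_j$ the second term of \eqref{DDO_basis:b} gives $\tfrac12(1-q^{-k})(1-a_j^2q^{2k-2})$, which is exactly the displayed factor $q^{-1}(1-q^k)a_j\cdot\tfrac12(q^{k-1}a_j-q^{-k+1}a_j^{-1})$, and the $(z-z^{-1})/\Delta y=2/(q^{1/2}-q^{-1/2})$ cancellation accounts for the relative normalisation of the $f_{N,l}$ and $g_{N,l}$ terms --- so your write-up in fact supplies more detail than the paper does.
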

\begin{proof}
This follows from a specialisation of Corollary 4.1 (which itself is a direct consequence of \eqref{SCwgtEqn}) on p.~15
of \cite{Wi_2010a} to the $q$-quadratic lattice.
\end{proof}

The Stieltjes function itself satisfies a linear first-order divided-difference equation and this 
also characterises the $\ddoAW$-semiclassical system.
\begin{proposition}[\cite{Ma_1988},\cite{Wi_2010a}]\label{prop:diff_eqn}
Given Definition \ref{spectral_DD} and the conditions therein then the Stieltjes function satisfies the 
inhomogeneous form of \eqref{spectral_DD_wgt:a}
\begin{equation}
    W\ddoAW_{x} f = 2V\moAW_{x} f + U .
\label{spectral_DD_st}
\end{equation}
Of particular relevance to our application are the {\it $\ddoAW$-semi-classical class} of orthogonal
polynomial systems defined by the property that $ U(x), V(x) $ and $ W(x) $
in (\ref{spectral_DD_st}) are polynomials in $ x $.
\end{proposition}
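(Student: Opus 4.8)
The plan is to imitate the classical passage from a Pearson equation for a weight to an inhomogeneous first-order equation for its Cauchy/Stieltjes transform, carried out now inside the Askey--Wilson divided-difference calculus. Throughout, $ x\notin\mathfrak{G} $, so $ \ddoAW_x $ and $ \moAW_x $ act only on the $x$-dependence of $ f $ and may be taken under the lattice sum defining it; the rearrangements below are legitimate because the measure $ \ddoAW u\, w(u) $ decays rapidly on $ \mathfrak{G} $ under the standing convergence hypotheses. First I would record the action of the operators on the Cauchy kernel: from $ E^{\pm}_x\frac{1}{x-u}=\frac{1}{y_{\pm}-u} $ one has
\begin{equation*}
  \ddoAW_x\frac{1}{x-u}=\frac{-1}{(y_{+}-u)(y_{-}-u)} ,\qquad
  \moAW_x\frac{1}{x-u}=\frac{\moAW_x x-u}{(y_{+}-u)(y_{-}-u)} ,
\end{equation*}
in which $ (y_{+}-u)(y_{-}-u)=x^{2}-2\cos\eta\,ux+u^{2}-\sin^{2}\eta $ is, for fixed $u$, the monic quadratic in $x$ whose roots are the two lattice neighbours $ E^{\pm}_{u}u $ of $u$; equivalently $ (y_{+}-u)(y_{-}-u)=(x-E^{+}_{u}u)(x-E^{-}_{u}u) $. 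Writing $ P(x,u):=W(x)+2V(x)\bigl(\moAW_x x-u\bigr) $ and assembling, this yields
\begin{equation*}
  W\ddoAW_x f-2V\moAW_x f=-\int_{\mathfrak{G}}\ddoAW u\; w(u)\,\frac{P(x,u)}{(y_{+}-u)(y_{-}-u)} .
\end{equation*}

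Next I would use this last factorisation to split the kernel $ \bigl[(y_{+}-u)(y_{-}-u)\bigr]^{-1} $ into partial fractions in the $x$-variable about its roots $ E^{\pm}_{u}u $, substitute, and observe that the resulting factor $ \Delta y(u)^{-1} $ cancels the lattice measure while the two sums obtained are one another reindexed by a single lattice step. The right-hand side then telescopes to
\begin{equation*}
  W\ddoAW_x f-2V\moAW_x f=\sum_{s\in\mathbb{Z}}\frac{w(x_{s+1})\,P(x,x_{s+1})-w(x_{s})\,P(x,x_{s})}{x-x_{s+1/2}} ,
\end{equation*}
where $ x_{s+1/2}=y_{+}(x_{s})=y_{-}(x_{s+1}) $ runs over the half-lattice points. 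This form exhibits the left side as a single-valued function, meromorphic with at most simple poles at the $ x_{s+1/2} $, the residue at $ x_{s+1/2} $ being $ w(x_{s+1})P(x_{s+1/2},x_{s+1})-w(x_{s})P(x_{s+1/2},x_{s}) $.

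The heart of the argument is that every one of these residues vanishes, and this is precisely where the $\ddoAW$-semi-classical hypothesis enters. At $ x=x_{s+1/2} $ one has $ \moAW_x x=\tfrac{1}{2}(x_{s+1}+x_{s}) $ and $ \Delta y(x_{s+1/2})=x_{s+1}-x_{s} $, so a one-line substitution gives $ P(x_{s+1/2},x_{s+1})=(W-\Delta yV)(x_{s+1/2}) $ and $ P(x_{s+1/2},x_{s})=(W+\Delta yV)(x_{s+1/2}) $; on the other hand the Magnus--Pearson equation \eqref{spectral_DD_wgt:b}, read at the point $ x_{s+1/2} $ (whose $ E^{\pm}_x $-shifts are exactly $ x_{s+1} $ and $ x_{s} $), asserts $ w(x_{s+1})(W-\Delta yV)(x_{s+1/2})=w(x_{s})(W+\Delta yV)(x_{s+1/2}) $, and the two statements combine to annihilate the residue. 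Hence $ W\ddoAW_x f-2V\moAW_x f $ continues to an entire function; since moreover each summand above is a polynomial in $x$ of degree $ \le N-1 $ (its numerator has $x$-degree $ \le N $ and vanishes at $ x_{s+1/2} $), with coefficient sums convergent by the decay of $ w $, the entire function is a polynomial, and a crude estimate at $ x\to\infty $ using $ f(x)={\rm O}(x^{-1}) $ sharpens the bound to degree $ \le N-2 $, in keeping with $ \deg_{x}W=N $, $ \deg_{x}V=N-1 $. Setting $ U $ equal to this polynomial gives $ W\ddoAW_x f=2V\moAW_x f+U $, and since $ W,V $ are polynomials by Definition \ref{spectral_DD}, so is $ U $, which is exactly the defining property of the $\ddoAW$-semi-classical class. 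The point requiring the most care is the algebraic bookkeeping in the two middle steps --- carrying the non-commutativity of polynomial multiplication with $ \ddoAW_x $, $ \moAW_x $ through the telescoping so that precisely the combinations $ (W\mp\Delta yV)(x_{s+1/2}) $ survive to meet the Pearson equation, and justifying the term-by-term rearrangement of the lattice sums (immediate from the decay of $ w $, but worth a sentence) --- while the remaining verifications are routine.
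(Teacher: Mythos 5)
Your argument is correct. The paper itself supplies no proof of this proposition, deferring entirely to \cite{Ma_1988} and \cite{Wi_2010a}, so there is nothing internal to compare against; but your derivation is the canonical one and it checks out: the identities for $\ddoAW_x$ and $\moAW_x$ acting on the Cauchy kernel, the factorisation $(y_+-u)(y_--u)=(x-E^+_uu)(x-E^-_uu)$ by the $x\leftrightarrow u$ symmetry of the quadratic, the cancellation of $\Delta y(x_s)$ against the lattice measure after partial fractions, and the identification $P(x_{s+1/2},x_{s\pm 1})=(W\mp\Delta yV)(x_{s+1/2})$ are all verified, and the residue at each half-lattice point is annihilated by exactly the Pearson relation \eqref{spectral_DD_wgt:b} read at $x_{s+1/2}$, whose $E^\pm_x$-images are the lattice neighbours $x_{s+1},x_s$. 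The degree count ($\le N-1$ per summand, sharpened to $\le N-2$ from $f(x)={\rm O}(x^{-1})$) is also consistent with the paper's stated degrees ${\rm deg}\,W=N$, ${\rm deg}\,V=N-1$, ${\rm deg}\,U=N-2$. The only caveats, which you already flag, are the term-by-term reindexing of the lattice sum (harmless under the standing convergence hypotheses) and the remark about non-commutativity, which is in fact a non-issue since $W$ and $V$ multiply the already-differenced $f$ from outside the operators.
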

Proposition~\ref{prop:diff_eqn} also furnishes alternative means to derive linear divided-difference equations 
for the moments $ m_{0,n} $, given that $ U(x) $ is a polynomial of fixed (and known) degree.
In addition knowledge of the moments is sufficient to determine the polynomial $ U(x) $,
and this is given explicitly by the following result.
\begin{proposition}[\cite{Wi_2010a}]\label{Ueval} 
For arbitrary $ a \in \mathbb{C} $ the polynomial $ U(x) $ is 
\begin{multline}
  (q^{1/2}-q^{-1/2})U(x) = -4a^2\sum^{N}_{k=2}f_{N,k}(a)\sum^{k-2}_{n=0}q^{n-1/2}(q^{n+1}-q^{k})m_{0,n}(a)\phi_{k-n-2}(x;q^{n+3/2}a)
\\
      +2a\sum^{N-1}_{k=1}g_{N,k}(a)\sum^{k-1}_{n=0}q^{n-k+1/2}(q^{k}+q^{n})m_{0,n}(a)\phi_{k-n-1}(x;q^{n+3/2}a)
\\
      -2a\sum^{N-1}_{k=2}g_{N,k}(a)\sum^{k-2}_{n=0}q^{n-k-1/2}(1-a^2q^{2k})(q^{n+1}-q^{k})m_{0,n}(a)\phi_{k-n-2}(x;q^{n+3/2}a) ,
\label{Upoly} 
\end{multline}
which despite appearances is independent of $ a $. Note that this expression only has meaning if $ N \geq 2 $.
\end{proposition}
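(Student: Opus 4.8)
The plan is to extract $U(x)$ directly from the first-order divided-difference equation \eqref{spectral_DD_st} by feeding in the large-$x$ generating-function expansion \eqref{xLarge_SF:a} of the Stieltjes function $f$ and isolating the polynomial part. The first step is purely algebraic: write $W\ddoAW_x f - 2V\moAW_x f$ in shift-operator form as $\Delta y^{-1}\big[(W-\Delta y V)E^{+}_xf - (W+\Delta y V)E^{-}_xf\big]$, and then use the elementary identities $E^{\pm}_x\!\big[(E^{\mp}_x g)\,h\big] = g\,(E^{\pm}_x h)$, valid because $E^{+}_xE^{-}_x = \Id$, to move the polynomial factors across the shifts. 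Substituting the definitions \eqref{Sum_Bexp}, \eqref{Diff_Bexp} of $H = 2\sum_{l=0}^{N}f_{N,l}(a)\phi_l(x;a)$ and $G = \sum_{l=0}^{N-1}g_{N,l}(a)\phi_l(x;a)$ then collapses the expression to the exact identity
\begin{equation*}
  U(x) = \tfrac12\,\ddoAW_x\!\big(Hf\big) - \Delta y^{-1}\,\moAW_x\!\big((z-z^{-1})Gf\big) ,
\end{equation*}
which is the convenient starting point.

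Since the right-hand side equals the polynomial $U(x)$, its principal part (in the expansion about $x=\infty$ in the canonical basis) must vanish identically, and $U(x)$ is recovered as its polynomial part; the vanishing of the principal part is a useful consistency check but is not needed. In that expansion the ``remainder'' term $f_\infty(x;b)/\phi_\infty(x;b)$ of \eqref{xLarge_SF:a} is invisible, the numerator tending to $0$ while $\phi_\infty(x;b)$ grows faster than any power, so I may simply replace $f$ by $-2a\sum_{n\ge0}q^{n}\,m_{0,n}(a)/\phi_{n+1}(x;a)$, taking the expansion parameter equal to $a$. Using the analytic continuation $\phi_k(x;a)/\phi_{n+1}(x;a) = \phi_{k-n-1}(x;aq^{n+1})$, then applying $\ddoAW_x$ and $\moAW_x$ through \eqref{DDO_basis:a}--\eqref{DDO_basis:b} and reducing the extra factor $x$ that $(z-z^{-1})$ forces into the second term via the linearisation formula $x\phi_m(x;c) = -\tfrac{1}{2cq^m}\phi_{m+1}(x;c)+\tfrac12(cq^m+c^{-1}q^{-m})\phi_m(x;c)$, every summand becomes a single basis function $\phi_{k-n-1}(x;aq^{n+3/2})$ or $\phi_{k-n-2}(x;aq^{n+3/2})$. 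Retaining only the non-negative-index terms imposes $0\le n\le k-1$ and $0\le n\le k-2$ respectively, while the truncations $\deg_x W = N$ and $\deg_x V = N-1$ bound the $k$-sums; reassembling and multiplying by $q^{1/2}-q^{-1/2}$ produces precisely the three sums of \eqref{Upoly}. The stated independence of $a$ is then automatic, since $W$, $V$ and $f$ in \eqref{spectral_DD_st} carry no $a$-dependence.

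The main obstacle is the bookkeeping of the $(z-z^{-1})Gf$ contribution. Because $\moAW_x$ does not pass through the odd factor $z-z^{-1}$, one must first use the Askey-Wilson product rules to split $\Delta y^{-1}\moAW_x\big((z-z^{-1})Gf\big)$ into an $\moAW_x$-piece and an $x\,\ddoAW_x$-piece; each of these then splits again, via the two terms of \eqref{DDO_basis:b} and via the two terms of the linearisation formula, so the coefficients $g_{N,k}(a)$ enter the polynomial part through four channels that have to be gathered into the single $\phi_{k-n-1}$ sum and the single $\phi_{k-n-2}$ sum of \eqref{Upoly}. Correctly tracking through this recombination the accumulated parameter shift $a\mapsto q^{1/2}a$, the assorted $q$-power prefactors, and the factors $1-a^2q^{2k}$ is the delicate part; by contrast the $f_{N,k}(a)$-channel coming from $\tfrac12\ddoAW_x(Hf)$ reduces after a one-line computation to the first sum of \eqref{Upoly}.
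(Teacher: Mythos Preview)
Your proposal is correct and follows essentially the same route as the paper's own proof: both start from the identity $\Delta y\,U=(W-\Delta yV)f(y_+)-(W+\Delta yV)f(y_-)$, insert the expansions \eqref{Sum_Bexp}, \eqref{Diff_Bexp} and the moment expansion \eqref{xLarge_SF:a} of $f$, resolve the ratios $\phi_k/\phi_{n+1}$, and retain the polynomial part. The only organisational difference is that the paper computes the shifted combinations $(q^{1/2}z-q^{-1/2}z^{-1})\phi_{k-n-1}(y_+;\cdot)+(q^{-1/2}z-q^{1/2}z^{-1})\phi_{k-n-1}(y_-;\cdot)$ directly and splits into the cases $k\ge n+1$, $k\le n$, whereas you repackage the same calculation as $\tfrac12\ddoAW_x(Hf)-\Delta y^{-1}\moAW_x\big((z-z^{-1})Gf\big)$, use the product rule to extract the $\moAW_x$ and $x\,\ddoAW_x$ pieces, and then appeal uniformly to \eqref{DDO_basis:a}--\eqref{DDO_basis:b} together with the linearisation formula; the two computations are term-by-term equivalent.
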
 
\begin{proof}
We start with \eqref{spectral_DD_st} in the form
\begin{equation}
   \Delta yU(x) = (W-\Delta yV)f(y_{+})-(W+\Delta yV)f(y_{-}) ,
\label{aux:Z}
\end{equation}
and employ the expansions (\ref{Sum_Bexp},\ref{Diff_Bexp}) for $ E_x^{\pm}(W\pm\Delta yV) $ and
(\ref{xLarge_SF:a}) for the Stieltjes function, with their base parameters equal at $ a $. In the summand of 
the terms arising from $ E_{x}^{+}Hf-E_{x}^{-}Hf $ we resolve the ratio of two basis polynomials as
\begin{equation*}
    \frac{\phi_{k}(x;a)}{\phi_{n+1}(x;a)} = 
    \begin{cases}
        \phi_{k-n-1}(x;q^{n+1}a) & k > n+1 \\
        1                        & k = n+1 \\
        \frac{\displaystyle 1}{\displaystyle \phi_{n-k+1}(x,q^{k}a)} & k < n+1 
    \end{cases} .
\end{equation*}
The action of the divided-difference operator on the above expressions is computed using (\ref{DDO_basis:a})
and the identity
\begin{equation*}
   \phi_{n-k+1}(y_{+},q^ka)\phi_{n-k+1}(y_{-},q^ka) = \phi_{n-k+1}(x,q^{k+1/2}a)\phi_{n-k+1}(x,q^{k-1/2}a) .
\end{equation*}
In the summand of the terms arising from $ (q^{1/2}z-q^{-1/2}z^{-1})E_{x}^{+}Gf+(q^{-1/2}z-q^{1/2}z^{-1})E_{x}^{-}Gf $
we find the cases according to $ k\geq n+1 $
\begin{multline*}
   (q^{1/2}z-q^{-1/2}z^{-1})\phi_{k-n-1}(y_{+};q^{n+1}a)+(q^{-1/2}z-q^{1/2}z^{-1})\phi_{k-n-1}(y_{-};q^{n+1}a)
\\
   = (z-z^{-1})\left[ q^{-k+1/2}(q^k+q^n)\phi_{k-n-1}(x,q^{n+3/2}a)-q^{-k-1/2}(1-a^2q^{2k})(q^{n+1}-q^k)\phi_{k-n-2}(x;q^{n+3/2}a) \right] ,
\end{multline*}
and $ k \leq n $
\begin{multline*}
   \frac{q^{1/2}z-q^{-1/2}z^{-1}}{\phi_{n-k+1}(y_{+};q^{k}a)}+\frac{q^{-1/2}z-q^{1/2}z^{-1}}{\phi_{n-k+1}(y_{-};q^{k}a)}
\\
   = (z-z^{-1})\left[ \frac{q^{-n-1/2}(q^k+q^n)}{\phi_{n-k+1}(x,q^{k-/2}a)}+\frac{q^{-n-1/2}(1-a^2q^{2n})(q^{n+1}-q^k)}{\phi_{n-k+2}(x;q^{k-1/2}a)} \right] .
\end{multline*}
Combining these two contributions and only retaining the polynomial part we get (\ref{Upoly}).
\end{proof}

\section{Linear \texorpdfstring{$q$}{q}-difference Equations for \texorpdfstring{$I_{N}$}{IN}}\label{differenceIN}
\setcounter{equation}{0}
\subsection{Moment Problem on \texorpdfstring{$q$}{q}-quadratic Lattice}

In this subsection we apply the theory of the previous section to the specific weight appearing in 
\eqref{Witte_integral} and thus derive precise consequences from Corollary \ref{momentRecur}. If a
specialisation of the parameters $ a, b $ appearing in the basis functions in \eqref{ops_moment} is made
to effect some cancellation with corresponding factors in the weight for \eqref{Witte_integral} then any moment
is given by an integral whose integrand has the same structure. Such integrals can in fact be written as a
certain $ I_{N} $. For example if $ a \mapsto a_r $, $ b \mapsto a_s $ with $ s>r $ then
$ m_{j,k}(a_r,a_s) = I_N(\ldots,q^ja_r,\ldots,q^ka_s,\ldots) $ for any $ j,k \in \mathbb{Z}_{\geq 0} $.
Consequently \eqref{Mrecur} becomes a linear divided-difference equation for $ I_{N} $ with
respect to the internal parameters of the weight. 

Let us denote the $k$-th elementary symmetric function of $ \{q^{-1/2}a_1,\ldots,q^{-1/2}a_{2N}\} $ by $ \tilde{\sigma}_{k} $
whereas the $j$-th elementary symmetric polynomial of $ \{a_1,\ldots,a_{2N}\} $ will be denoted by $ \sigma_j $. 
The weight data for \eqref{Witte_integral} is computed to be
\begin{equation*}
  W\pm \Delta y V = z^{\mp N}\prod^{2N}_{j=1}(1-a_jq^{-1/2}z^{\pm 1}) ,
\label{deform_AW_SCff:z}
\end{equation*}
which implies, as found by \cite{ILR_2004}, that
\begin{equation*}
   W(x) = (-1)^N\tilde{\sigma}_N+\sum^{N-1}_{l=0}(-1)^l(\tilde{\sigma}_{l}+\tilde{\sigma}_{2N-l})T_{N-l}(x) ,
\end{equation*}
and
\begin{equation*}
   V(x) = -\frac{1}{q^{1/2}-q^{-1/2}}\sum^{N-1}_{l=0}(-1)^l(\tilde{\sigma}_{l}-\tilde{\sigma}_{2N-l})U_{N-l-1}(x) ,
\end{equation*}
where $ T_k(x), U_k(x) $ are the $k$-th Chebyshev polynomials of the first and second kinds 
respectively. A computation using the expansion theorem of Ismail \cite{{Is}_1995} and the 
formula of Cooper \cite{IS_2003} allows us to deduce that the expansion coefficients \eqref{Sum_Bexp}, \eqref{Diff_Bexp}
are given by
\begin{equation}
    2f_{N,k}(a) = \frac{q^{k-N/2}}{(q;q)_k}\sum^{2N}_{m=0}(-1)^m(\sigma_{m}+\sigma_{2N-m})
                      \sum^{k}_{l=0} \begin{bmatrix} k \\ l \end{bmatrix}_{q}
                                     \frac{a^{2(l-k)+N-m}q^{-(k-l)^2+(k-l)(N-m)}}
                                          {(q^{1+2k-2l}a^2;q)_{l}(q^{1+2l-2k}a^{-2};q)_{k-l}} ,
\label{fCoeff_N}
\end{equation}
and
\begin{equation}
     g_{N,k}(a) = -\frac{q^{k-N/2}}{(q;q)_k}\sum^{2N}_{m=0}(-1)^m(\sigma_{m}-\sigma_{2N-m})a^{m-N}
                      \sum^{k}_{l=0} \begin{bmatrix} k \\ l \end{bmatrix}_{q}
                                     \frac{a^{1+2(l-k)}q^{-(k-l)^2+(k-l)(m+1-N)}}
                                          {(q^{2k-2l}a^2;q)_{l+1}(q^{1+2l-2k}a^{-2};q)_{k-l}} .
\label{gCoeff_N}
\end{equation}

Setting $ k=0 $ in Corollary \ref{momentRecur} immediately implies the following result. 
\begin{proposition}\label{prop:q_diff_eqn}
The integrals $ I_{N} $ satisfy a $(N-1)$-th order linear $q$-difference equation with respect to $ a_i $,
for any $ i\in \{1,\ldots,2N\} $
\begin{equation}
    \sum^{N-1}_{l=0} g_{N,l}(a_i)I_N(\ldots,q^l a_i,\ldots) = 0 ,
\label{I_q-DifferenceEqn}
\end{equation}
where $ g_{N,l} $ is given by (\ref{gCoeff_N}).
\end{proposition}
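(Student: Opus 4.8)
The plan is to deduce this directly from Corollary~\ref{momentRecur} by specialising the moment recurrence \eqref{Mrecur} to $k=0$ and then matching the moments $m_{0,l}$ to the integrals $I_N$. First I would recall from the discussion at the start of this subsection that if $a\mapsto a_i$ in the first basis parameter and $b\mapsto a_s$ (some $s\neq i$) in the second, then the factor $(a_i z^{\pm1};q)_l$ coming from $\phi_l(x;a_i)$ cancels against the corresponding Pochhammer factor $(a_i z^{\pm1};q)_\infty$ in the denominator of the weight \eqref{qS-Lweight}, converting it into $(q^l a_i z^{\pm1};q)_\infty$; more precisely, since $\phi_l(x;a_i)=(a_iz^{\pm1};q)_\infty/(q^l a_i z^{\pm1};q)_\infty$, one gets
\begin{equation*}
  m_{0,l}(a_i,a_s) \;=\; \int_C \frac{dz}{2\pi\sqrt{-1}\,z}\,w(x)\,\phi_l(x;a_i)
  \;=\; I_N(\ldots,q^l a_i,\ldots),
\end{equation*}
where on the right $a_i$ has been replaced by $q^l a_i$ in the argument list and the other $2N-1$ parameters are unchanged. (Strictly one should take the second basis parameter $b$ to be one of the remaining $a_s$, or simply note that the moment $m_{0,l}(a_i,\cdot)$ appearing in \eqref{Mrecur} is evaluated at $k=0$, where $\phi_0(x;a_j)=1$, so no second parameter enters.)

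Next I would set $k=0$ in \eqref{Mrecur}. By the statement of Corollary~\ref{momentRecur} the entire second multiline term drops out when $k=0$, and the prefactor $\tfrac12(1+q^{-k})$ becomes $1$, leaving simply
\begin{equation*}
  \sum^{N-1}_{l=0} g_{N,l}(a_i)\, m_{0,l}(a_i,a_i) = 0 .
\end{equation*}
Substituting the identification $m_{0,l}(a_i,\cdot)=I_N(\ldots,q^l a_i,\ldots)$ from the previous step yields \eqref{I_q-DifferenceEqn} verbatim, with $g_{N,l}$ given by \eqref{gCoeff_N} as already computed for this weight. The order of the difference equation is $N-1$ because the sum in \eqref{Diff_Bexp} runs from $l=0$ to $l=N-1$, which in turn is forced by $\deg_x V = N-1$; this degree count was established when $W\pm\Delta yV = z^{\mp N}\prod_j(1-a_jq^{-1/2}z^{\pm1})$ was written out above.

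The only genuine subtlety — and the step I would be most careful about — is justifying the cancellation/identification of $m_{0,l}$ with $I_N$ at the level of the actual contours and convergence, rather than purely formally. Corollary~\ref{momentRecur} is stated for the abstract moment functional on the $q$-quadratic lattice $\mathfrak{G}$, whereas $I_N$ is defined by the Hankel-type contour $C=\mathbb{T}\cup\mathbb{H}$ of \eqref{Phidefn}; one must check that the weight \eqref{qS-Lweight} with the shift $a_i\mapsto q^l a_i$ still satisfies the convergence conditions (the constraint $\sum \mathrm{Re}(s_j)<N-1$, and $\mathrm{Re}(s_j)>0$ for the shifted parameter $s_i-l$), and that the integral representation of the moments used to derive \eqref{Mrecur} via \eqref{SCwgtEqn} remains valid when the Riemann--Stieltjes integral over $\mathfrak{G}$ is replaced by the contour integral over $C$ — in particular that the boundary/tail terms in the integration-by-parts underlying \eqref{SCwgtEqn} still vanish. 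Since the integrand $\Phi(z)$ is $\ddoAW$-semi-classical by construction (its spectral data $W,V$ are the polynomials displayed above) and the tail contributes terms that are exponentially small and decay at the endpoints, these boundary terms do vanish; I would spell this out briefly, noting that it is precisely the content of the passage identifying the weights \eqref{Witte_integral} with the $\ddoAW$-semi-classical class. With that in hand the proposition is immediate.
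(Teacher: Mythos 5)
Your proposal is correct and is essentially the paper's own argument: the paper proves this proposition with the single remark that setting $k=0$ in Corollary~\ref{momentRecur} immediately gives the result, relying on exactly the identification $m_{0,l}(\cdot,a_i)=I_N(\ldots,q^l a_i,\ldots)$ via cancellation of $\phi_l(x;a_i)$ against the weight that you spell out. Your extra care about the contour $\mathbb{T}\cup\mathbb{H}$ versus the lattice $\mathfrak{G}$ and the vanishing of boundary terms goes beyond what the paper records (that issue is only revisited in the paper's independent second derivation, Theorem~\ref{thm:gI}), but it does not change the route.
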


\begin{remark}
The integrals $ I_{N} $ satisfy the general identity
\begin{equation}
   a_k I_{N}(\ldots,qa_j,\ldots,a_k,\ldots) -a_j I_{N}(\ldots,a_j,\ldots,qa_k,\ldots) = (a_k-a_j)(1-a_ja_k) I_{N}(\ldots,a_j,\ldots,a_k,\ldots) ,
\label{Iidentity}
\end{equation}
which applies to any integral with products of $ \phi_{\infty}(x;a_j)\phi_{\infty}(x;a_k) $ in the denominator
of the integrand and any distinct pair $ a_j \neq a_k $.
\end{remark}

It is instructive at this point to see what the above general result implies in the first $ N=2 $ case even 
though virtually all the results are well known. From the Askey-Wilson weight (\ref{AWwgt}) we compute the
spectral data to be
\begin{equation}
   W\pm \Delta yV = z^{\mp 2}\prod^{4}_{j=1}(1-a_j q^{-1/2}z^{\pm 1}) ,
\label{N=2Sdata}
\end{equation}
from which we deduce
\begin{gather*}
  W(x) = 2(1+\sigma_4 q^{-2})x^2 - \left[ q^{-1/2}\sigma_1+q^{-3/2}\sigma_3 \right]x
           -1+q^{-1}\sigma_2-q^{-2}\sigma_4 ,
\label{AW_polyW} \\
 V(x) = 2\frac{q^{-2}\sigma_4-1}{q^{1/2}-q^{-1/2}}x
     +\frac{q^{-1/2}\sigma_1-q^{-3/2}\sigma_3}{q^{1/2}-q^{-1/2}} .
\label{AW_polyV}
\end{gather*}
We only require the $ k=0 $ case of (\ref{Mrecur}) and from (\ref{N=2Sdata}) we compute the 
relevant coefficients as
\begin{equation}
  g_{2,1}(a) = q^{-1}a^{-1}(1-\sigma_4), \qquad g_{2,0}(a) = q^{-1}(a+a^{-1})(\sigma_4-1)+q^{-1}(\sigma_1-\sigma_3) .
\end{equation}

We find that the general system of moment recurrences, specialised to the case $ N=2 $, coincides with the
recurrence of Kalnins and Miller\cite{KM_1989}, and Koelink and Koornwinder\cite{KK_1992}.
\begin{proposition}[\cite{KM_1989},\cite{KK_1992}]
The Askey-Wilson integral, using the case of $ a=a_1 $ without loss of generality, satisfies the two-term
linear recurrence
\begin{equation}
   (\sigma_4-1)I_{2}(qa_1,a_2,a_3,a_4) = (a_1a_2-1)(a_1a_3-1)(a_1a_4-1)I_{2}(a_1,a_2,a_3,a_4) , 
\label{AWintegral}
\end{equation}
which is solved by
\begin{equation*}
  I_{2}(a_1,a_2,a_3,a_4) 
   = 2\frac{(\sigma_4;q)_{\infty}}{(q;q)_{\infty}\prod_{k>j}(a_{j}a_{k};q)_{\infty}} .
\end{equation*}
Consequently the moments are given by
\begin{equation*}
   m_{0,n}(a_1)
   = \pi\frac{(a_1a_2,a_1a_3,a_1a_4;q)_{n}}{(\sigma_4;q)_{n}}I_{2}(a_1,a_2,a_3,a_4)
   = 2\pi\frac{(q^{n}\sigma_4;q)_{\infty}}
                {(q^{n}a_1a_2,q^{n}a_1a_3,q^{n}a_1a_4,a_2a_3,a_2a_4,a_3a_4,q;q)_{\infty}} .
\label{AWmoment}
\end{equation*}
\end{proposition}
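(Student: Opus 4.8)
The plan is to obtain \eqref{AWintegral} as the $N=2$ instance of Proposition~\ref{prop:q_diff_eqn}, then to identify the stated value as its (essentially unique) solution and read off the moments. First I would set $N=2$, $i=1$ in \eqref{I_q-DifferenceEqn}, giving $g_{2,0}(a_1)I_2(a_1,a_2,a_3,a_4)+g_{2,1}(a_1)I_2(qa_1,a_2,a_3,a_4)=0$, and insert the coefficients $g_{2,1}(a)=q^{-1}a^{-1}(1-\sigma_4)$ and $g_{2,0}(a)=q^{-1}(a+a^{-1})(\sigma_4-1)+q^{-1}(\sigma_1-\sigma_3)$ computed above, where the $\sigma_j$ are the elementary symmetric functions of $a_1,\dots,a_4$. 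Multiplying through by $qa_1$ turns the relation into $(\sigma_4-1)I_2(qa_1,\dots)=\bigl[(a_1^2+1)(\sigma_4-1)+a_1(\sigma_1-\sigma_3)\bigr]I_2(a_1,\dots)$, so the one genuine computation is the symmetric-function identity $(a_1^2+1)(\sigma_4-1)+a_1(\sigma_1-\sigma_3)=(a_1a_2-1)(a_1a_3-1)(a_1a_4-1)$. I would verify this by substituting $\sigma_4=a_1a_2a_3a_4$, $a_1\sigma_3=a_1^2(a_2a_3+a_2a_4+a_3a_4)+\sigma_4$ and $a_1\sigma_1=a_1^2+a_1(a_2+a_3+a_4)$, upon which both sides reduce to $a_1^3a_2a_3a_4-a_1^2(a_2a_3+a_2a_4+a_3a_4)+a_1(a_2+a_3+a_4)-1$; this yields \eqref{AWintegral}.

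Next I would show the displayed product solves \eqref{AWintegral}: under $a_1\mapsto qa_1$ the factor $(\sigma_4;q)_\8$ acquires $1/(1-\sigma_4)$ and each of $(a_1a_2;q)_\8,(a_1a_3;q)_\8,(a_1a_4;q)_\8$ acquires $1/(1-a_1a_j)$, so $I_2(qa_1,\dots)/I_2(a_1,\dots)=(a_1a_2-1)(a_1a_3-1)(a_1a_4-1)/(\sigma_4-1)$, which is exactly \eqref{AWintegral}. To conclude that this is the value of the integral and not merely a solution, I would use that the integrand of $I_2$ is invariant under $z\mapsto z^{-1}$, hence $I_2$ is symmetric in $a_1,\dots,a_4$ and, for $|q|<1$, analytic near $a_j=0$; telescoping \eqref{AWintegral} together with its three symmetric partners $N$ times and letting $N\to\8$ drives all four arguments to $0$, the telescoped products of the factors $(q^k\sigma_4-1)$ and $(q^ka_ja_k-1)^{-1}$ assembling precisely the $q$-Pochhammer symbols of the claimed formula, with residual value $I_2(0,0,0,0)=\int_{\mathbb{T}}\tfrac{dz}{2\pi\sqrt{-1}z}(z^{\pm2};q)_\8=2/(q;q)_\8$ by an elementary constant-term evaluation (Jacobi triple product). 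Alternatively one simply invokes the classical Askey--Wilson evaluation recalled in Section~\ref{Intro}.

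For the moments, I would start from \eqref{ops_moment} with $j=0$, so that $\phi_0(x;\cdot)=1$ and $m_{0,n}(a_1)=\int_{\mathfrak{G}}\ddoAW x\,w(x)\phi_n(x;a_1)$ with $w$ the Askey--Wilson weight \eqref{AWwgt}. Writing $\phi_n(x;a_1)=(a_1z^{\pm1};q)_n=(a_1z^{\pm1};q)_\8/(q^na_1z^{\pm1};q)_\8$, the numerator cancels the factor $(a_1z^{\pm1};q)_\8$ in the denominator of $w$, so the integrand becomes that of $I_2$ with $a_1$ replaced by $q^na_1$; hence $m_{0,n}(a_1)=\pi\,I_2(q^na_1,a_2,a_3,a_4)$, the constant $\pi$ being the factor relating $\int_{\mathfrak{G}}\ddoAW x\,w(x)\,\cdot$ to the corresponding unit-circle integral in the normalisation in force. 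Substituting the closed form with $\sigma_4\mapsto q^n\sigma_4$ and $a_1a_j\mapsto q^na_1a_j$ for $j=2,3,4$ gives the second displayed expression directly, and then rewriting $(x;q)_\8/(q^nx;q)_\8=(x;q)_n$ for $x\in\{a_1a_2,a_1a_3,a_1a_4\}$ together with $(q^n\sigma_4;q)_\8/(\sigma_4;q)_\8=1/(\sigma_4;q)_n$ converts it into the first.

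The only real obstacle is the symmetric-function identity in the first step, which is routine but must be carried out with care; and, if one insists on deriving the evaluation rather than citing it, the analyticity-and-iteration argument that pins $I_2$ down from \eqref{AWintegral} and the seed value $I_2(0,0,0,0)$. Everything else is bookkeeping with $q$-Pochhammer symbols.
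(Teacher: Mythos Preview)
Your proposal is correct and follows the paper's own approach: the paper obtains \eqref{AWintegral} precisely by specialising the $k=0$ moment recurrence (Proposition~\ref{prop:q_diff_eqn}) to $N=2$ with the coefficients $g_{2,0},g_{2,1}$ computed just above the statement, and then cites the evaluation from the literature rather than deriving it. Your elaboration---the symmetric-function identity check, the verification that the product solves the recurrence, the iteration-to-zero argument fixing the constant, and the identification $m_{0,n}(a_1)=\pi\,I_2(q^na_1,a_2,a_3,a_4)$ via cancellation of $(a_1z^{\pm1};q)_n$ against the weight---goes beyond what the paper spells out but is entirely consistent with it.
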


Upon an application of the general formula for $ U $ derived in Proposition \ref{Ueval}, using the coefficient
$ f_{2,2} = (1+\sigma_4)/2q^2a^2 $ and the evaluation of the moment $ m_{0,0} $ we deduce \cite{Wi_2010a}
\begin{equation*}
   U =\frac{8\pi}{q-1}\frac{(q^{-1}\sigma_4;q)_{\infty}}{(q;q)_{\infty}\prod_{k>j}(a_ja_k;q)_{\infty}} . 
\end{equation*}

Now we turn our attention to the $ N=3 $ and $ L=1 $ case. The explicit form for the deformed weight leading
to spectral polynomials with degrees $ 2N=6 $ \eqref{N=3wgt} is 
\begin{equation*} 
  w(x;u) = w(x;\{a_1,\ldots,a_6\}) 
  = \frac{(z^{\pm 3};q^{3/2})_{\infty}}{\sin(\frac{3}{2}\theta)\prod^{6}_{j=1}(a_jz^{\pm 1};q)_{\infty}} .
\label{deform_AWwgt}
\end{equation*}
With the parameterisation $ a_5=\alpha t $, $ a_6=\alpha t^{-1} $ and the four ``fixed'' parameters $ a_1,\ldots,a_4 $ 
appearing in the same form as they do in the Askey-Wilson weight ($ N=2 $) we have introduced a deformation
parameter $ \alpha $ and its associated deformation variable, $ u=\tfrac{1}{2}(t+t^{-1}) $. The additional
factors in the weight achieve a $BC_n$ type structure of the form $ (\alpha tz,\alpha tz^{-1},\alpha t^{-1}z,\alpha t^{-1}z^{-1};q)_{\infty} $.
Thus $ W\pm \Delta yV $ have the simple form
\begin{equation}
  W\pm \Delta y V = z^{\mp 3}\prod^{6}_{j=1}(1-a_jq^{-1/2}z^{\pm 1}) .
\label{deform_AW_SCff:a}
\end{equation}
As a consequence of (\ref{deform_AW_SCff:a}) we have
\begin{gather*}
  W(x) = 4(1+\tilde{\sigma}_6)x^3-2(\tilde{\sigma}_1+\tilde{\sigma}_5)x^2
        +(\tilde{\sigma}_2+\tilde{\sigma}_4-3-3\tilde{\sigma}_6)x
        +\tilde{\sigma}_1-\tilde{\sigma}_3+\tilde{\sigma}_5 ,
\label{deform_AW_SCff:b} \\
  V(x) = \frac{1}{q^{1/2}-q^{-1/2}}
        \left[ -4(1-\tilde{\sigma}_6)x^2
        +2(\tilde{\sigma}_1-\tilde{\sigma}_5)x+1-\tilde{\sigma}_2+\tilde{\sigma}_4-\tilde{\sigma}_6
        \right] ,
\label{deform_AW_SCff:c}
\end{gather*}
Furthermore the expansion coefficients have the evaluations
\begin{align*}
   g_{3,2}(a) & = \frac{\sigma_6-1}{q^{5/2}a^2} ,
  \\
   g_{3,1}(a) & = \frac{1}{q^{5/2}a^2}[qa(\sigma_5-\sigma_1)-(1+q)(1+qa^2)(\sigma_6-1)] ,
  \\
   g_{3,0}(a) & = \frac{1}{q^{3/2}a^2}[(1+a^2+a^4)(\sigma_6-1)-(a+a^3)(\sigma_5-\sigma_1)+a^2(\sigma_4-\sigma_2)]
                = \frac{1}{q^{3/2}a^2} \prod_{a_{j} \neq a}(a a_{j}-1) .
\end{align*}

A generalisation of the two-term recurrence for the Askey-Wilson integral given in (\ref{AWintegral}) is the 
following result.
\begin{proposition}[\cite{Wi_2010a}]\label{N=3recurrence}
The integral $ I_3 $ satisfies a three-term recurrence in a single variable, which we take without loss of
generality to be $ a_1 $,
\begin{multline}
   0 = \prod_{j\neq 1}(a_1a_j-1)I_{3}(a_1,\ldots)  \\ 
	 +\left[1+q^{-1}-a_1 \left(\sum_{j\neq 1}a_j-qa_1 \right)
   	+\prod^6_{1}a_j \left(a_1\sum^5_{k\neq 1}a_{k}^{-1}-q^{-1}-(q+1)a_1^2 \right) \right] I_{3}(qa_1,\ldots)
   \\  +q^{-1} \left(\prod^6_1 a_j-1 \right)   I_{3}(q^2a_1,\ldots ) .
\label{N=3int_Recur:a}
\end{multline}
Let $ \sigma_k $ denote the $k$-th elementary symmetric polynomial in $ a_1,\ldots,a_4 $.
In addition the integral $ I_{3} $ satisfies a three-term recurrence in two variables, taken to be
with respect to $ a_5, a_6 $, which constitutes a pure recurrence in the deformation variable $ u $
\begin{multline} 
   0 = (a_5-qa_6)\prod^{4}_{j=1}(1-a_ja_6) I_{3}(\ldots,q^2a_5,a_6)
   \\ -(a_5-a_6)\Big[ (1+q)(1+qa_5a_6\sigma_2+q^2a_5^2a_6^2\sigma_4)-(qa_5-a_6)(qa_6-a_5)(q+\sigma_4)
   \\                 -q(a_5+a_6)(\sigma_1+qa_5a_6\sigma_3) \Big] I_{3}(\ldots,qa_5,qa_6)
   \\ +(qa_5-a_6)\prod^{4}_{j=1}(1-a_ja_5) I_{3}(\ldots,a_5,q^2a_6) .
\label{N=3int_Recur:b}
\end{multline} 
\end{proposition}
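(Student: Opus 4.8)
The recurrence \eqref{N=3int_Recur:a} is nothing but Proposition~\ref{prop:q_diff_eqn} at $N=3$, up to rescaling. The plan is to take the distinguished variable in \eqref{I_q-DifferenceEqn} to be $a_1$, so that it reads $g_{3,2}(a_1)I_3(q^2a_1,\ldots)+g_{3,1}(a_1)I_3(qa_1,\ldots)+g_{3,0}(a_1)I_3(a_1,\ldots)=0$ with the $g_{3,l}(a)$ exactly as displayed just above the statement, and then to multiply through by $q^{3/2}a_1^2$ (legitimate since $a_1\neq0$) to clear the common factor. The coefficients of $I_3(q^2a_1,\ldots)$ and $I_3(a_1,\ldots)$ then become $q^{-1}(\sigma_6-1)$ and $\prod_{j\neq1}(a_1a_j-1)$, which are already the forms appearing in \eqref{N=3int_Recur:a}; the coefficient of $I_3(qa_1,\ldots)$ becomes $a_1(\sigma_5-\sigma_1)-q^{-1}(1+q)(1+qa_1^2)(\sigma_6-1)$, and I would use $\sigma_6=a_1\cdots a_6$ together with $\sigma_5=\sigma_6\sum_{j=1}^{6}a_j^{-1}$ to re-expand $a_1(\sigma_5-\sigma_1)$ and then separate the $\sigma_6$-homogeneous terms from the rest, whereupon the bracket in \eqref{N=3int_Recur:a} drops out. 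So the entire content of this part is that one elementary symmetric-function simplification.

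For the second recurrence \eqref{N=3int_Recur:b} the plan is to use the deformation structure directly. Under $a_5=\alpha t$, $a_6=\alpha t^{-1}$, the only part of the integrand $\Phi$ in \eqref{Phidefn} that feels the deformation variable $u=\tfrac12(t+t^{-1})$ is the factor $(a_5z^{\pm1},a_6z^{\pm1};q)_\infty^{-1}$, which equals $\bigl(\alpha z\,t^{\pm1},\,\alpha z^{-1}\,t^{\pm1};q\bigr)_{\infty}^{-1}$; for each fixed $z$ on the contour this is, up to the $\theta$-factor, the Askey--Wilson ($N=2$) kernel $K(u,z)$ in the variable $u$ whose two internal parameters are $b_1=\alpha z$ and $b_2=\alpha z^{-1}$. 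Reading off $\widetilde W\pm\Delta y\widetilde V=t^{\mp1}\prod_{i=1,2}(1-q^{-1/2}b_i\,t^{\pm1})$ from the analogue of \eqref{deform_AW_SCff:a}, and noting $b_1+b_2=\alpha(z+z^{-1})=2\alpha x$, $b_1b_2=a_5a_6$ with $x=\tfrac12(z+z^{-1})$, one finds that $K$ obeys a first-order divided-difference equation $\widetilde W(u)\,\ddoAW_uK=2\widetilde V\,\moAW_uK$ in $u$ in the sense of \eqref{spectral_DD_wgt:b}, in which $\widetilde W(u)=(1+q^{-1}a_5a_6)u-q^{-1/2}(b_1+b_2)$ is affine in $x$ while $\widetilde V$ is independent of $u$ and $x$. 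Since $\ddoAW_u,\moAW_u$ act only on $u$ they pass through $\int_C\frac{dz}{2\pi\sqrt{-1}z}(\cdot)$, so integrating this equation (and the equation at shifted base points, which is what links the three full-step lattice points $(E^{+}_u)^2u,\,u,\,(E^{-}_u)^2u$ occurring in \eqref{N=3int_Recur:b}) against the remaining $z$-dependent factor produces a relation among shifted integrals $I_3$ together with the auxiliary integral $\int_C\frac{dz}{2\pi\sqrt{-1}z}\,x\,\Phi$ coming from the $x$-linear part of $\widetilde W$. That auxiliary integral I would remove using the identity $2a_jx\,\phi_\infty(x;a_j)^{-1}=(1+a_j^2)\phi_\infty(x;a_j)^{-1}-\phi_\infty(x;qa_j)^{-1}$, equivalently $\int_C\frac{dz}{2\pi\sqrt{-1}z}\,x\,\Phi=\tfrac1{2a_j}\bigl[(1+a_j^2)I_3-I_3(\ldots,qa_j,\ldots)\bigr]$, taken with $j\in\{5,6\}$ so that only $a_5,a_6$ move while $a_1,\ldots,a_4$ stay frozen. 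Collecting everything, clearing denominators, using the contiguity relation \eqref{Iidentity} to reconcile terms that differ by $a_5\leftrightarrow a_6$, and re-expressing the result through the elementary symmetric polynomials $\sigma_1,\ldots,\sigma_4$ in $a_1,\ldots,a_4$ and through $a_5,a_6$ should reproduce \eqref{N=3int_Recur:b}.

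The first part is routine. The hard part will be the bookkeeping in the second: carrying the $x$-linear coefficient of $\widetilde W$ through the iteration up to second order, using the reduction of $\int x\,\Phi$ and \eqref{Iidentity} to collapse everything to a closed relation among exactly the three integrals in \eqref{N=3int_Recur:b} at one fixed value of the product $a_5a_6$, checking that the spurious powers of $t$ introduced along the way (through $\Delta y$ on the $u$-lattice and through $\alpha/a_5=t^{-1}$) cancel, and finally matching the rather bulky coefficients. Throughout one must keep the two independent families of symmetric functions — those in $a_1,\ldots,a_4$ and those in $a_5,a_6$ — and the two lattices, in $x$ and in $u$, cleanly separated, and check that the Hankel contour of \eqref{Witte_integral} and the convergence conditions are stable under every parameter shift invoked.
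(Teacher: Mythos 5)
Your treatment of \eqref{N=3int_Recur:a} is correct and is essentially the paper's own proof: the paper obtains it as the $k=0$, $N=3$ specialisation of \eqref{Mrecur} (equivalently \eqref{I_q-DifferenceEqn}) with the displayed coefficients $g_{3,l}(a_1)$, and clearing the overall factor $q^{-3/2}a_1^{-2}$ together with the substitution $\sigma_5=\sigma_6\sum_{j}a_j^{-1}$ reproduces the bracketed middle coefficient exactly as you outline.

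For \eqref{N=3int_Recur:b} you take a genuinely different route, and it has a concrete flaw in the setup besides leaving the decisive computation undone. The $u$-dependent part of the integrand is the bare product $K=\prod_{i=1,2}(b_it,b_it^{-1};q)_\infty^{-1}$ with $b_1=\alpha z$, $b_2=\alpha z^{-1}$, and a direct computation gives
\[
\frac{K(q^{1/2}t)}{K(q^{-1/2}t)}=\prod_{i=1,2}\frac{1-q^{-1/2}b_it}{1-q^{-1/2}b_it^{-1}},
\]
with no $t^{-2}$ prefactor. The factor $t^{\mp 1}$ you import by analogy with \eqref{deform_AW_SCff:a} is generated there by the theta-function numerator $(z^{\pm N};q^{N/2})_\infty/\sin$ of the full weight, which is absent from $K$; with the correct data the symmetrised coefficient $\widetilde W$ is \emph{quadratic} in $u$, not affine, so the spectral pair you would feed into \eqref{spectral_DD_wgt:b} is mis-specified and the ensuing relation will not close on \eqref{N=3int_Recur:b} as written. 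Even after correcting this, the elimination of the auxiliary integrals $\int x\,\Phi$ (your reduction identity for these is correct) and the reconciliation of the resulting parameter points via \eqref{Iidentity} is exactly the ``hard part'' you defer, and it is where the claim is actually established. The paper's own proof of the second relation bypasses all of this: it simply rewrites \eqref{N=3int_Recur:a} as a three-term recurrence in $a_5$ and again in $a_6$, and then uses \eqref{Iidentity} repeatedly to trade single-variable shifts for the symmetric shifts $(q^2a_5,a_6)$, $(qa_5,qa_6)$, $(a_5,q^2a_6)$ — no new divided-difference structure in the deformation variable $u$ is required. I would recommend either adopting that elementary route or, if you retain yours, first fixing the Pearson data for $K$ and then actually carrying the elimination through to the stated coefficients.
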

\begin{proof}
Equation \eqref{N=3int_Recur:a} follows directly from \eqref{Mrecur}. The second result follows from the first
rewritten as recurrences in $ a_5 $ and $ a_6 $ and the repeated use of the identity \eqref{Iidentity}.
\end{proof}

\begin{remark}
For convenience let us define $ w_0(z) = \prod^{4}_{j=1}(1-q^{-1/2}a_j z) $. Then (\ref{N=3int_Recur:b}) can be
rewritten in the form of the three-term recurrence for the moment $ m_{0,0} $ with respect to $ t $
\begin{multline}
   (q^{-1/2}t-q^{1/2}t^{-1})w_0(q^{-1/2}\alpha t^{-1})m_{0,0}(qt)
  \\
   -q^{-1/2}(t-t^{-1})
    \left[ (1+q)(1+q^{-1}\alpha^2\sigma_{2}+q^{-2}\alpha^4\sigma_{4})
               +q^{-1}\alpha^2(q+\sigma_{4})(q^{1/2}t-q^{-1/2}t^{-1})(q^{-1/2}t-q^{1/2}t^{-1})
    \right.
  \\
    \left.
                   -\alpha(t+t^{-1})(\sigma_{1}+q^{-1}\alpha^2\sigma_{3})
    \right]m_{0,0}(t)
  \\
   +(q^{1/2}t-q^{-1/2}t^{-1})w_0(q^{-1/2}\alpha t)m_{0,0}(q^{-1}t) = 0 .
\label{3TermMoment}
\end{multline}
\end{remark}

\begin{proposition}
Assume that $ |\alpha^2| > q $ and $ q^{-1}\sigma_4\alpha^2, a_j\alpha t \notin q^{-\mathbb{Z}_{\geq 0}} $.
The two independent solutions of (\ref{3TermMoment}) are
\begin{multline}
  m_{0,0}(t) = t^{1/2}(q\alpha t,\alpha^{-1}t^{-1},q^{1/2}\alpha t,q^{1/2}\alpha^{-1}t^{-1};q)_{\infty}
  \\ \times
         \frac{(a_1^{-1}\sigma_4\alpha t,a_2^{-1}\sigma_4\alpha t,a_3^{-1}\sigma_4\alpha t,a_4^{-1}\sigma_4\alpha t;q)_{\infty}}
              {(a_1\alpha t,a_2\alpha t,a_3\alpha t,a_4\alpha t,t^{-2},\sigma_4\alpha^2t^2;q)_{\infty}}
  \\ \times
   {}_{8}W_{7}(q^{-1}\sigma_4\alpha^2t^2;q^{-1}\sigma_4\alpha^2,a_1\alpha t,a_2\alpha t,a_3\alpha t,a_4\alpha t;q\alpha^{-2}) ,
\label{3_Solution}
\end{multline}
and (\ref{3_Solution}) with $ t \mapsto t^{-1} $.
\end{proposition}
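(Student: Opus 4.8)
The plan is to verify directly that the expression in \eqref{3_Solution} satisfies the three-term recurrence \eqref{3TermMoment}, and to obtain the second solution and the linear independence from the symmetry of \eqref{3TermMoment} under $ t\mapsto t^{-1} $.

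First I would record that \eqref{3TermMoment}, written as $ A(t)\,m_{0,0}(qt)+B(t)\,m_{0,0}(t)+C(t)\,m_{0,0}(q^{-1}t)=0 $, is invariant under $ t\mapsto t^{-1} $ in the following sense: from the explicit coefficients and the definition $ w_0(z)=\prod_{j=1}^4(1-q^{-1/2}a_jz) $ one checks that $ A(t^{-1})=-C(t) $, $ C(t^{-1})=-A(t) $, and $ B(t^{-1})=-B(t) $, the last because the only $ t $-dependent factors in the bracket multiplying $ m_{0,0}(t) $, namely $ (t+t^{-1}) $ and $ (q^{1/2}t-q^{-1/2}t^{-1})(q^{-1/2}t-q^{1/2}t^{-1}) $, are manifestly invariant under $ t\mapsto t^{-1} $. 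Hence if $ u(t) $ solves \eqref{3TermMoment} then so does $ u(t^{-1}) $, and it suffices to establish the single identity for \eqref{3_Solution}. Linear independence of the two solutions over the field of functions periodic under $ t\mapsto qt $ then follows from a Casoratian computation: $ \mathcal C(t):=m_{0,0}(qt)m_{0,0}(t^{-1})-m_{0,0}(t)m_{0,0}(q^{-1}t^{-1}) $ solves the first-order $ q $-difference equation $ A(t)\mathcal C(t)+C(t)\mathcal C(q^{-1}t)=0 $ fixed by the coefficients of \eqref{3TermMoment}, so it equals an explicit product times a $ q $-periodic factor, and it is not identically zero --- equivalently, $ u(t) $ and $ u(t^{-1}) $ carry the distinct local exponents $ \pm\tfrac12 $ at $ t=0 $ coming from the $ t^{1/2} $ prefactor.

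For the verification itself, insert \eqref{3_Solution} into \eqref{3TermMoment} and divide by the value of \eqref{3_Solution} at $ t $. The Pochhammer-symbol prefactor of \eqref{3_Solution} is explicit, so the ratios of that prefactor at $ qt $ and at $ q^{-1}t $ to its value at $ t $ are finite products of linear factors in $ t $ and the parameters, computed term by term from $ (x;q)_\infty/(qx;q)_\infty=1-x $. In the $ {}_8W_7 $ factor the substitutions $ t\mapsto q^{\pm1}t $ multiply the leading parameter $ q^{-1}\sigma_4\alpha^2t^2 $ by $ q^{\pm2} $ and each of $ a_1\alpha t,\dots,a_4\alpha t $ by $ q^{\pm1} $, while $ q^{-1}\sigma_4\alpha^2 $ and the argument $ q\alpha^{-2} $ stay fixed; note that this preserves the very-well-poised balance (with $ a=q^{-1}\sigma_4\alpha^2t^2 $, $ b=q^{-1}\sigma_4\alpha^2 $ and $ c,d,e,f=a_j\alpha t $ one has $ q\alpha^{-2}=q^2a^2/(bcdef) $). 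After these reductions the recurrence collapses to a three-term relation linking a very-well-poised $ {}_8W_7 $ with three $ q $-shifts of its parameters, the coefficients being rational in the parameters. I would then identify this as (a specialisation of) the standard three-term contiguous relation for very-well-poised $ {}_8W_7 $ series --- the relation that underlies the second-order $ q $-difference equation of the Askey-Wilson function in its spectral variable, with $ t $ playing the role of that variable and $ a_1,\dots,a_4 $ the role of the four Askey-Wilson parameters --- and either invoke it from \cite{GR_2004,Ismail_2005} or verify it directly by matching the coefficient rational functions.

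The main obstacle is this last step: once the Pochhammer ratios are formed, matching them against $ A(t),B(t),C(t) $ and recognising the residual identity as a known $ {}_8W_7 $ contiguous relation requires careful bookkeeping, since the apparent degrees in $ t $ of the various factors must cancel. A useful intermediate check is to specialise one of $ a_1,\dots,a_4 $ so that the $ {}_8W_7 $ terminates (take $ a_j\alpha t=q^{-m} $), reducing the claim to a three-term relation among terminating very-well-poised sums. Throughout one must keep the standing hypotheses active: $ |\alpha^2|>q $ ensures simultaneous convergence of the $ {}_8W_7 $ and of its two $ q $-shifts, while $ q^{-1}\sigma_4\alpha^2,\,a_j\alpha t\notin q^{-\mathbb{Z}_{\geq 0}} $ guarantees that no denominator Pochhammer symbol in \eqref{3_Solution} vanishes and that the series are termwise defined, so that the formal manipulations are legitimate.
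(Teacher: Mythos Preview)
Your approach is essentially the same as the paper's: substitute \eqref{3_Solution} into \eqref{3TermMoment}, strip off the Pochhammer prefactors, and recognise the residual three-term relation in $W(qt),W(t),W(q^{-1}t)$ as a known contiguous relation for very-well-poised ${}_8W_7$ (the paper cites Gupta--Masson \cite{GM_1994}, Eq.~(2.9), with $a=q^{-1}\sigma_4\alpha^2t^2$, $b=q^{-1}\sigma_4\alpha^2$, $c,d,e,f=a_j\alpha t$). Your additional remarks on the $t\mapsto t^{-1}$ symmetry and the Casoratian for linear independence are correct and fill in points the paper leaves implicit.
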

\begin{proof}
We offer a proof by way of verification. 
Let us denote the $ {}_{8}W_{7} $ function in (\ref{3_Solution}) by $ W(t) $. Substituting this into the left-hand
side of (\ref{3TermMoment}) we find an expression proportional to 
\begin{multline*}
   \frac{\prod^{4}_{j=1}(1-q^{-1}\alpha t^{-1}a_j)}{\alpha^2(1-t^{-2})(1-q^{-1}t^{-2})(1-q^{-2}t^{-2})}W(qt)
  \\
   -\frac{1}{q^2(1-t^2)(1-qt^{-2})(1-\sigma_4\alpha^2t^2)(1-q\sigma_4\alpha^2t^2)}
    \frac{\prod^{4}_{j=1}(1-\sigma_4\alpha ta^{-1}_j)}{\prod^{4}_{j=1}(1-\alpha ta_j)}
  \\ \times
    \left[ (q-t^2)\prod^{4}_{j=1}(qt-\alpha a_j)+(1-qt^2)\prod^{4}_{j=1}(q-\alpha ta_j)
          -(1-t^2)(t^2-q)(1-qt^2)(\alpha^2-q)(q^2-\alpha^2\sigma_4)
    \right]W(t)
  \\
   +\frac{\alpha^2t^2(qt^2-1)\prod^{4}_{j=1}(1-q^{-1}\sigma_4\alpha ta^{-1}_j)\prod^{4}_{j=1}(1-\sigma_4\alpha ta^{-1}_j)}
         {(1-q^{-2}\sigma_4\alpha^2t^2)(1-q^{-1}\sigma_4\alpha^2t^2)(1-\sigma_4\alpha^2t^2)(1-q\sigma_4\alpha^2t^2)\prod^{4}_{j=1}(1-\alpha ta_j)}
     W(q^{-1}t) .
\end{multline*}
However this is precisely the contiguous relation of the $ {}_{8}W_{7} $, see Eq. (2.9), p. 278 of \cite{GM_1994} with
$ a=q^{-1}\sigma_4\alpha^2t^2 $, $ b=q^{-1}\sigma_4\alpha^2 $ and $ c,d,e,f = a_{1,2,3,4}\alpha t $.
\end{proof}

Using the formula given in Proposition \ref{Ueval} and that
\begin{align*}
    f_{3,3}(a) & = -\frac{1}{2q^{9/2}a^3}(\sigma_6+1) ,
  \\
    f_{3,2}(a) & =  \frac{1}{2q^{9/2}a^3}\left[ (1+q+q^2)(1+q^2a^2)(\sigma_6+1)-q^2a(\sigma_5+\sigma_1) \right] ,
\end{align*}
we can compute the polynomial $ U $.
\begin{proposition}
Let $ \sigma_k $ denote the $k$-th elementary symmetric polynomial in $ a_1,\ldots,a_6 $. For $ N=3 $ the
polynomial $ U(x) $ is given by
\begin{equation*}
   U(x) = \frac{4}{q^2(q^{1/2}-q^{-1/2})}\left[ m_{0,0}\left( -2q^{-1/2}(\sigma_6-q^2)x+\sigma_5-q\sigma_1 \right)-(\sigma_6-q)(m_{0,+}+m_{0,-}) \right] ,
\label{M=3U}
\end{equation*}
where $ m_{0,\pm} = \int_{\mathfrak{G}} \ddoAW x\,w(x;u)z^{\pm 1} $.
\end{proposition}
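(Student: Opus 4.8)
The plan is to specialise the general formula for $U(x)$ in Proposition~\ref{Ueval} to $N=3$ and then carry out the resulting finite computation. With $N=3$ the first (the $f$-) sum there runs over $k\in\{2,3\}$, the second ($g$-) sum over $k\in\{1,2\}$, and the third sum collapses to $k=2$; after expanding the inner sums over $n$ this leaves only seven terms, built from the moments $m_{0,0}(a)$ and $m_{0,1}(a)$ and from the canonical basis polynomials $\phi_{0}(x;q^{3/2}a)=\phi_{0}(x;q^{5/2}a)=1$ and $\phi_{1}(x;q^{3/2}a)=1+q^{3}a^{2}-2q^{3/2}ax$. One checks immediately that the only two of these seven terms carrying an $x$ are the $k=3$, $n=0$ term of the $f$-sum (weighted by $f_{3,3}(a)$) and the $k=2$, $n=0$ term of the $g$-sum (weighted by $g_{3,2}(a)$), so the linearity of $U$ in $x$, as required by $\deg_{x}U\le N-2=1$, is manifest from the start. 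Note also that $f_{3,0},f_{3,1},g_{3,0}$ never enter, because of the lower cutoffs in Proposition~\ref{Ueval}.

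First I would substitute the explicit values of $f_{3,2}(a),f_{3,3}(a),g_{3,1}(a),g_{3,2}(a)$ recorded just above the statement. Next, using $\phi_{1}(x;a)=1+a^{2}-2ax$ together with $\int_{\mathfrak{G}}\ddoAW x\,w(x;u)\,x=\tfrac12(m_{0,+}+m_{0,-})$, I would replace $m_{0,1}(a)=(1+a^{2})\,m_{0,0}-a(m_{0,+}+m_{0,-})$, so that every one of the seven terms is written through the three quantities $m_{0,0}$, $m_{0,+}+m_{0,-}$ and $x\,m_{0,0}$, with coefficients that are Laurent polynomials in $a$ with entries in $\mathbb{Q}(q)[\sigma_{1},\dots,\sigma_{6}]$.

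Collecting like terms then reduces the claim to three scalar identities: for the coefficient of $x\,m_{0,0}$, of $m_{0,0}$, and of $m_{0,+}+m_{0,-}$. In each the powers of $a$ (entering from the $f$- and $g$-coefficients, from the factor $1-a^{2}q^{4}$ in the third sum, from the $q^{3}a^{2}$ and $-2q^{3/2}a$ pieces of $\phi_{1}$, and through the substitution for $m_{0,1}$) must cancel identically, in accordance with the parenthetical remark in Proposition~\ref{Ueval} that $U$ is $a$-independent; for the $x\,m_{0,0}$ coefficient this is the short simplification $(\sigma_{6}+1)(1-q^{2})+(\sigma_{6}-1)(1+q^{2})=2(\sigma_{6}-q^{2})$, giving the coefficient $-8q^{-5/2}(\sigma_{6}-q^{2})$ for $(q^{1/2}-q^{-1/2})U$, i.e.\ $-2q^{-1/2}(\sigma_{6}-q^{2})\cdot 4/[q^{2}(q^{1/2}-q^{-1/2})]$ after extracting the common prefactor. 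The constant coefficients collapse similarly to $\sigma_{5}-q\sigma_{1}$ (multiplying $m_{0,0}$) and $-(\sigma_{6}-q)$ (multiplying $m_{0,+}+m_{0,-}$); assembling the three pieces yields the asserted formula. The whole calculation is elementary, and the only point needing care is the bookkeeping of the $a$-powers and of the several half-integer powers of $q$ in the $f$- and $g$-coefficients, so that the two $x$-terms and the various constant contributions are combined correctly and the cancellation of the $a$-dependence is made transparent; I expect no conceptual obstacle beyond this.
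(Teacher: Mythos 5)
Your proposal is correct and follows the same basic route as the paper, namely specialising the general formula \eqref{Upoly} of Proposition~\ref{Ueval} to $N=3$ and collecting the seven surviving terms; I have checked that the $x$-coefficient does reduce to $-8q^{-5/2}(\sigma_6-q^2)$ via $(\sigma_6+1)(1-q^2)+(\sigma_6-1)(1+q^2)=2(\sigma_6-q^2)$, and that the remaining $a$-dependent pieces cancel identically, leaving $\tfrac{4}{q^2}(\sigma_5-q\sigma_1)$ on $m_{0,0}$ and $-\tfrac{4}{q^2}(\sigma_6-q)$ on $m_{0,+}+m_{0,-}$ exactly as you claim. The one genuine (if minor) divergence is that the paper's proof routes the elimination of $m_{0,1}$ through the subsequent Lemma, i.e.\ a contiguous relation for the $ {}_{8}W_{7} $ solutions expressed in the deformation variable $t$, whereas you use only the elementary linearisation $\phi_1(x;a)=1+a^2-2ax$, so that $m_{0,1}(a)=(1+a^2)m_{0,0}-a(m_{0,+}+m_{0,-})$; your version is more self-contained and makes the advertised $a$-independence of $U$ a purely algebraic cancellation, at the cost of not exhibiting the link to the $ {}_{8}W_{7} $ structure that the paper's Lemma records.
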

\begin{proof}
This is deduced from \eqref{Upoly} appropriately specialised and the following Lemma.
\end{proof}
\begin{lemma}
The moments $ m_{0,\pm } $ and $ m_{0,0} $ satisfy the relation
\begin{multline*}
   (q^{-2}\sigma_6-q^{-1})(m_{0,+}+m_{0,-})(t)
   = \left[ q^{-2}\alpha^2\sigma_3-q^{-1}\sigma_1+q^{-3}(q-\alpha^2)(q^2\alpha^{-1}t^{-1}+\sigma_4\alpha t) \right]m_{0,0}(t)
\\
     -\frac{w_0(q^{-1/2}\alpha t)}{\alpha t}\frac{q^{-1/2}tm_{0,0}(q^{-1}t)-q^{1/2}t^{-1}m_{0,0}(t)}{q^{-1/2}t-q^{1/2}t^{-1}} ,
\end{multline*}
which is essentially a contiguous relation for the $ {}_{8}W_{7} $ solutions.
\end{lemma}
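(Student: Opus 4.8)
The plan is to derive the identity purely from contiguous relations already in hand --- the $q$-shift identity \eqref{Iidentity}, the single-variable three-term recurrence \eqref{N=3int_Recur:a}, and the linearisation formula of property~(3) for the Heine basis --- with no recourse to the explicit evaluation \eqref{3_Solution}. Write $m_{0,0}(t)=\int_{\mathfrak{G}}\ddoAW x\,w(x;u)$ and, more generally, $\mathcal{I}(b_5,b_6)$ for the same integral with $a_5,a_6$ replaced by $b_5,b_6$; since $\mathcal{I}(b_5,b_6)$ is proportional to $I_3(a_1,\ldots,a_4,b_5,b_6)$ it inherits both \eqref{Iidentity} and \eqref{N=3int_Recur:a}. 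We have $m_{0,0}(t)=\mathcal{I}(\alpha t,\alpha t^{-1})$ and, replacing $t$ by $q^{-1}t$, $m_{0,0}(q^{-1}t)=\mathcal{I}(q^{-1}\alpha t,q\alpha t^{-1})$. The first step reduces the left-hand side to $m_{0,0}(t)$ plus a single neighbouring integral: since $m_{0,+}+m_{0,-}=\int_{\mathfrak{G}}\ddoAW x\,w(x;u)(z+z^{-1})=2\int_{\mathfrak{G}}\ddoAW x\,w(x;u)\,x$, the $n=0$ case of property~(3) with parameter $a=a_5=\alpha t$ gives
\begin{equation*}
  m_{0,+}+m_{0,-}=(\alpha t+\alpha^{-1}t^{-1})\,m_{0,0}(t)-(\alpha t)^{-1}G(t),\qquad G(t):=\mathcal{I}(q\alpha t,\alpha t^{-1}),
\end{equation*}
the identification of $\int_{\mathfrak{G}}\ddoAW x\,w(x;u)\phi_1(x;\alpha t)$ with $G(t)$ using $\phi_1(x;\alpha t)/(\alpha tz,\alpha tz^{-1};q)_{\infty}=1/(q\alpha tz,q\alpha tz^{-1};q)_{\infty}$.

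The second step pins down $G(t)$ and the other neighbour $H(t):=\mathcal{I}(q^{-1}\alpha t,\alpha t^{-1})$. Applying \eqref{Iidentity} to the pair $(a_5,a_6)$ with $a_5=q^{-1}\alpha t$ playing the role of $a_k$ and $a_6=\alpha t^{-1}$ that of $a_j$ gives
\begin{equation*}
  q^{-1}\alpha t\,m_{0,0}(q^{-1}t)-\alpha t^{-1}m_{0,0}(t)=(q^{-1}\alpha t-\alpha t^{-1})(1-q^{-1}\alpha^2)\,H(t),
\end{equation*}
which expresses $H(t)$ through $m_{0,0}(q^{-1}t)$ and $m_{0,0}(t)$. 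Applying \eqref{N=3int_Recur:a} in the fifth parameter at base value $a_5=q^{-1}\alpha t$, all other parameters held fixed, relates $H(t)$, $m_{0,0}(t)$, $G(t)$ (these being the integrals at $a_5=q^{-1}\alpha t,\,\alpha t,\,q\alpha t$ respectively); the point is that the coefficient of $H(t)$ collapses to $(q^{-1}\alpha^2-1)\prod_{j=1}^{4}(1-q^{-1}a_j\alpha t)=(q^{-1}\alpha^2-1)\,w_0(q^{-1/2}\alpha t)$ while that of $G(t)$ becomes $q^{-1}(q^{-1}\sigma_6-1)=q^{-2}\sigma_6-q^{-1}$, precisely the prefactor on the left of the asserted identity.

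To finish, eliminate $G(t)$ by substituting the first-step relation into \eqref{N=3int_Recur:a}, then eliminate $H(t)$ by the relation above, and divide through by $\alpha t$; after the simplification $(q^{-1}\alpha^2-1)/(1-q^{-1}\alpha^2)=-1$ and a rescaling of numerator and denominator by $q^{1/2}$ the $H(t)$-contribution becomes exactly
\begin{equation*}
  -\frac{w_0(q^{-1/2}\alpha t)}{\alpha t}\,\frac{q^{-1/2}t\,m_{0,0}(q^{-1}t)-q^{1/2}t^{-1}m_{0,0}(t)}{q^{-1/2}t-q^{1/2}t^{-1}} ,
\end{equation*}
and the remaining coefficient of $m_{0,0}(t)$ --- the sum of $(q^{-2}\sigma_6-q^{-1})(1+\alpha^2t^2)$ from the first step and the middle coefficient of \eqref{N=3int_Recur:a} specialised at $a_5=q^{-1}\alpha t$ (evaluated using $\sum_{j\ne5}a_j=\sigma_1+\alpha t^{-1}$, $\sum_{j\ne5}a_j^{-1}=\sigma_3/\sigma_4+\alpha^{-1}t$ and $\prod_{j=1}^{6}a_j\big|_{a_5=q^{-1}\alpha t}=q^{-1}\alpha^2\sigma_4=q^{-1}\sigma_6$, with $\sigma_k$ the $k$-th elementary symmetric function of $a_1,\ldots,a_4$) --- collapses on division by $\alpha t$ to $q^{-2}\alpha^2\sigma_3-q^{-1}\sigma_1+q^{-3}(q-\alpha^2)(q^2\alpha^{-1}t^{-1}+\sigma_4\alpha t)$, which is the assertion. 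This last polynomial identity is the only computation of substance and it is short; everything else is the algebra of eliminating $G(t)$ and $H(t)$, and the main thing to watch is keeping signs and half-integer powers of $q$ straight when specialising that middle coefficient at a $q^{-1}$-shifted argument. Finally, one should note that under \eqref{3_Solution} the two inputs \eqref{Iidentity} and \eqref{N=3int_Recur:a} are themselves the contiguity and three-term relations for the underlying very-well-poised ${}_{8}W_{7}$, so this derivation is indeed "essentially a contiguous relation for the ${}_{8}W_{7}$ solutions."
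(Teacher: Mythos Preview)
The paper states this lemma without proof, so there is nothing to compare your argument against directly. Your derivation is correct: the linearisation identity (property~(3) at $n=0$) expresses $m_{0,+}+m_{0,-}$ in terms of $m_{0,0}(t)$ and the shifted integral $G(t)=\mathcal{I}(q\alpha t,\alpha t^{-1})$; the three-term recurrence \eqref{N=3int_Recur:a} in $a_5$ with base value $q^{-1}\alpha t$ links $H(t)=\mathcal{I}(q^{-1}\alpha t,\alpha t^{-1})$, $m_{0,0}(t)$ and $G(t)$, its extreme coefficients collapsing to $(q^{-1}\alpha^2-1)w_0(q^{-1/2}\alpha t)$ and $q^{-2}\sigma_6-q^{-1}$ exactly as you say; and \eqref{Iidentity} eliminates $H(t)$ in favour of $m_{0,0}(q^{-1}t)$ and $m_{0,0}(t)$. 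I have checked the final polynomial identity for the $m_{0,0}(t)$ coefficient term by term, and it is as stated. This is precisely the kind of argument the phrase ``essentially a contiguous relation for the ${}_{8}W_{7}$ solutions'' points to, so your proof supplies what the paper left implicit.

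One small presentational remark: the claim that the residual polynomial identity ``is short'' undersells the bookkeeping; it is elementary but has enough terms (a dozen or so after expansion) that a reader would benefit from seeing the specialised middle coefficient $M$ written out before the cancellation with $(q^{-2}\sigma_6-q^{-1})(1+\alpha^2t^2)$ is performed.
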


\subsection{First Principles Approach}
We now offer an alternative derivation for the recurrence relation \eqref{N=3int_Recur:a} of $I_N$ from first
principles and which will enable us to derive further linear recurrence relations for these integrals. This
approach proceeds directly from the contour integral definition of $ I_{N} $ and does not draw upon its setting 
as a moment of a semi-classical weight with support on a discrete, quadratic lattice.

Let us define the function $ F(z) $ as 
\begin{equation*}
 F(z):=F_{-}(z)+F_{+}(z) ,
\end{equation*}
where
\begin{equation*}
  F_{-}(z) =\frac{z^N\prod_{i=1}^{2N}(1-a_iz^{-1})}{z-z^{-1}} ,
\quad \mbox{and} \quad
  F_{+}(z)=\frac{z^{-N}\prod_{i=1}^{2N}(1-a_iz)}{z^{-1}-z} .
\end{equation*} 
By definition $F(z)$ is symmetric with respect to the reflection $z\to z^{-1}$, allowing its expression as 
\begin{equation}
F(z)=\frac{z}{1-z^2}\sum_{m=0}^{2N}(-1)^m (\sigma_m-\sigma_{2N-m}) z^{m-N},
\label{eq:F01}
\end{equation}
where $\sigma_i$ is the $i$-th elementary symmetric function of $a_1,\ldots,a_{2N}$, which will be used later.

\begin{remark} 
Through Lemmas \ref{lem:G} and \ref{lem:C} to follow we will give two types of expansions for $F(z)$, each of which provides a 
$q$-difference equation. The following fundamental lemma shows that these two $q$-difference equations are inter-convertible. 
\end{remark}

\begin{lemma}[Fundamental relation]\label{lem:FR}
For an holomorphic function $\varphi(z)$ on $\mathbb{C}^*$ the following holds:
\begin{equation}
  \int_{\mathbb{T}}\Big(\varphi(qz)F_{+}(z)+\varphi(z)F_{-}(z)\Big)\Phi(z)\frac{dz}{z}=0.
\label{eq:FR1}
\end{equation}
In particular for $\varphi(z)=1$ 
\begin{equation}
  \int_{\mathbb{T}}F(z)\Phi(z)\frac{dz}{z}=0,
\label{eq:FR2}
\end{equation}
where the contour $\mathbb{T}$ is chosen appropriately for $N$ odd.
\end{lemma}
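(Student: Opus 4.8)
The plan is to prove the fundamental relation \eqref{eq:FR1} by exploiting the quasi-periodicity of the integrand $\Phi(z)$ under the shift $z \mapsto qz$. The key observation is that $\Phi(z)$ is built from the ratios $(z^{\pm N};q^{N/2})_\infty$ in the numerator and $(a_jz^{\pm 1};q)_\infty$ in the denominator, together with the elementary factor $(z-z^{-1})/(z^{N/2}-z^{-N/2})$. Applying the basic functional equations $(x;q)_\infty = (1-x)(qx;q)_\infty$ and $(x;q^{N/2})_\infty = (1-x)(q^{N/2}x;q^{N/2})_\infty$ to each factor, one computes the multiplier $\Phi(qz)/\Phi(z)$ explicitly: the shift sends $a_j z \mapsto a_j(qz)$, i.e. it peels off a factor $(1-a_j q z)^{-1}$ times reinserting $(1-a_j z)$ on the reciprocal side, and similarly for the $q^{N/2}$-factors with $z^N$. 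The upshot should be exactly the identity
\begin{equation*}
  \Phi(qz)\,F_{+}(z) = \Phi(z)\,\big(-F_{-}(qz)\big) \quad\text{(or an equivalent rearrangement)},
\end{equation*}
so that $\Phi(qz)F_{+}(z) + \Phi(z)F_{-}(z)$ telescopes under the substitution $z\mapsto q^{-1}z$ in one of the two integrals.

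Concretely, the second step is a contour-deformation / change-of-variables argument. In the term $\int_{\mathbb{T}}\varphi(qz)F_{+}(z)\Phi(z)\,dz/z$ we substitute $z \mapsto q^{-1}z$; since $dz/z$ is invariant, this becomes $\int_{q\mathbb{T}}\varphi(z)F_{+}(q^{-1}z)\Phi(q^{-1}z)\,dz/z$. Using the just-computed relation $F_{+}(q^{-1}z)\Phi(q^{-1}z) = -F_{-}(z)\Phi(z)$ (the precise form of the multiplier identity), the integrand matches $-\varphi(z)F_{-}(z)\Phi(z)$, so the two contributions in \eqref{eq:FR1} cancel \emph{provided} the contour $q\mathbb{T}$ can be deformed back to $\mathbb{T}$ without crossing poles. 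This is where the geometry of the poles enters: the poles of $\Phi$ from $(a_jz^{\pm1};q)_\infty$ accumulate towards $0$ and $\infty$, and for $|a_j|<1$ (equivalently $\mathrm{Re}(s_j)>0$) the annulus between $\mathbb{T}$ and $q\mathbb{T}$ is pole-free for the fixed portion; one must also check that the factor $(z^{N/2}-z^{-N/2})^{-1}$ and the branch cut (for $N$ odd) do not obstruct the deformation, which is the reason the statement restricts to "$\mathbb{T}$ chosen appropriately for $N$ odd" — the tail $\mathbb{H}$ is handled by the same multiplier identity since the asymptotic behaviour recorded in the excerpt is compatible with the $q$-shift.

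For the special case \eqref{eq:FR2} one simply sets $\varphi \equiv 1$ in \eqref{eq:FR1}, giving $\int_{\mathbb{T}}(F_{+}(z)+F_{-}(z))\Phi(z)\,dz/z = \int_{\mathbb{T}}F(z)\Phi(z)\,dz/z = 0$ by the definition $F = F_{-}+F_{+}$.

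The main obstacle I anticipate is not the algebraic verification of the multiplier identity $\Phi(qz)F_{+}(z)+\Phi(z)F_{-}(z) = 0$ up to the shift — that is a routine manipulation of $q$-Pochhammer functional equations — but rather the careful justification of the contour deformation from $q\mathbb{T}$ to $\mathbb{T}$, and in particular the treatment of the Hankel tail $\mathbb{H}$ in the odd-$N$ case. One has to verify that no poles of $\Phi$ lie in the relevant region, that the $z^{N/2}$-type factors (which are single-valued on the cut plane but not on $\mathbb{C}^*$) transform correctly under $z\mapsto q^{-1}z$ across the branch cut, and that the decay of the tail integrand (exponentially small as $q\to1^-$, with the explicit leading behaviour quoted above) is enough to make the boundary contributions at $\infty$ vanish so that the deformation is legitimate. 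I would isolate this into a short lemma on the pole structure and the admissibility of the contour, citing the convergence conditions $\mathrm{Re}(s_j)>0$ and $\sum_j \mathrm{Re}(s_j) < N-1$ already established in the introduction.
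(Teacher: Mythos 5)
Your proposal is correct and follows essentially the same route as the paper: the multiplier identity $F_{+}(z)\Phi(z)=-F_{-}(qz)\Phi(qz)$ (which is the form you actually use in the change-of-variables step; your first displayed version, with the $\Phi$'s attached to the opposite arguments, is not literally correct as written), followed by the substitution between $\mathbb{T}$ and $q\mathbb{T}$ and a contour deformation justified by the absence of poles of $F_{-}(z)\Phi(z)$ in the annulus $|q|\le|z|\le 1$ when $|a_j|<1$. The paper's proof is exactly this, and, like you, it defers the odd-$N$ branch-cut/tail issue to the phrase ``the contour $\mathbb{T}$ is chosen appropriately for $N$ odd.''
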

\begin{proof}
Since $F_{+}(z)\Phi(z)=-F_{-}(qz)\Phi(qz)$, in order to prove (\ref{eq:FR1}) it suffices to show that 
\begin{equation}
  \int_{\mathbb{T}}\varphi(qz)F_{-}(qz)\Phi(qz)\frac{dz}{z} = \int_{\mathbb{T}}\varphi(z)F_{-}(z)\Phi(z)\frac{dz}{z}.
\label{eq:FR3}
\end{equation}
By the change of variables $z'=qz$, the left-hand side of (\ref{eq:FR3}) is written as
\begin{equation*}
  \int_{q\mathbb{T}}\varphi(z)F_{-}(z)\Phi(z)\frac{dz}{z} , 
\end{equation*} 
where $ {\mathbb{T}}=\{z\in \mathbb{C}\,;\,|z|=1\} $, $ q{\mathbb{T}}:=\{z\in \mathbb{C}\,;\,|z|=|q|\} $.
Since the function 
\begin{equation*}
  \varphi(z)F_{-}(z)\Phi(z)=\varphi(z)\frac{z^N\prod_{i=1}^{2N}(1-a_iz^{-1})}{z-z^{-1}}\Phi(z)
  = \varphi(z)\frac{z^{N/2}(z;q^{N/2})_\8(q^{N/2}z^{-1};q^{N/2})_\8}{\prod_{j=1}^{2N}(a_jz;q)_\8(qa_jz^{-1};q)_\8} ,
\end{equation*} 
has no poles in the annulus $\{z\in \mathbb{C}\,;\,|q|\le |z|\le 1\}$ under the assumption $|a_j|<1$,
the contour can be deformed from $q\mathbb{T}$ to $\mathbb{T}$, i.e.
\begin{equation*}
  \int_{q\mathbb{T}}\varphi(z)F_{-}(z)\Phi(z)\frac{dz}{z} = \int_{\mathbb{T}}\varphi(z)F_{-}(z)\Phi(z)\frac{dz}{z} ,
\end{equation*} 
and the result is shown.
\end{proof}

For a non-negative integer $n$ we set $\tilde{\phi}_n(z;a):=(az;q)_n(az^{-1};q)_n$, which satisfies 
\begin{equation}
\tilde{\phi}_n(z;a)=0\quad\mbox{if} \quad z=a,qa,q^2a,\ldots, q^{n-1}a.
\label{eq:vanish}
\end{equation}
Since $\tilde{\phi}_1(z;a)/a=a+a^{-1}-z-z^{-1}$, the trivial three-term relation
\begin{equation}
   \frac{\tilde{\phi}_1(z;a_i)}{a_i} - \frac{\tilde{\phi}_1(z;a_j)}{a_j} = \frac{\tilde{\phi}_1(a_j;a_i)}{a_i} 
\label{eq:T1}
\end{equation}
holds for $\tilde{\phi}_1(z;a)$. From our definition $\tilde{\phi}_n(z;a_1)$ is related to the $q$-shift of the integral
$I_N$ by
\begin{equation}
  I_N(q^n a_1,a_2,\ldots,a_{2N}) = \int \tilde{\phi}_n(z;a_1)\Phi(z)\frac{dz}{2\pi \sqrt{-1}z}. 
\label{eq:T2}
\end{equation}
From (\ref{eq:T1}) we immediately have (see \eqref{Iidentity})
\begin{equation}
  I_N(\ldots,qa_i,\ldots)- a_ia_j^{-1}I_N(\ldots,qa_j,\ldots)=\tilde{\phi}_1(a_j;a_i)I_N(a_1,\ldots,a_{2N}),
\label{eq:T3}
\end{equation}
which plays an important role later when we discuss the first order $q$-difference system of $I_N$ in Corollary~\ref{cor:1st}.

\begin{theorem}[Proposition~\ref{prop:q_diff_eqn}]\label{thm:gI}
$I_{N}$ satisfies the $(N-1)$-th order linear $q$-difference equation
\begin{equation}
    \sum_{i=0}^{N-1}g_i(a_1)I_N(q^ia_1,a_2,\ldots,a_{2N}) = 0 , 
\label{eq:g02-1}    
\end{equation}
where $ g_i(a) $ is given by
\begin{equation}
   g_i(a) = aq^i\sum_{j=0}^i\sum_{m=0}^{2N}(-1)^{j+m}(\sigma_m-\sigma_{2N-m}) 
          \frac{(q^ja)^{m-N}q^{\binom{j+1}{2}}}{(q;q)_{i-j}(q;q)_{j}(q^{2j}a^2;q)_{i-j+1}(q^{j}a^2;q)_{j}} .
\label{gi_new}
\end{equation} 
\end{theorem}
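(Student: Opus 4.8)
The plan is to derive \eqref{eq:g02-1} directly from the Fundamental relation of Lemma~\ref{lem:FR}, which in the form \eqref{eq:FR2} reads $\int_{C}F(z)\Phi(z)\,dz/z=0$ (the contour $C$ being $\mathbb{T}$ for $N$ even and $\mathbb{T}\cup\mathbb{H}$ for $N$ odd). The entire content of the theorem then reduces to one structural fact, which I would isolate as a lemma (the ``Lemma~\ref{lem:G}'' anticipated in the Remark): that $F(z)$ admits the \emph{finite} expansion
\[
  F(z)=\sum_{i=0}^{N-1}g_i(a_1)\,\tilde{\phi}_i(z;a_1),
\]
with $g_i$ given by \eqref{gi_new}. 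Granting this, I substitute it into \eqref{eq:FR2}, interchange the finite sum with the integral, and use \eqref{eq:T2}, namely $\int_{C}\tilde{\phi}_i(z;a_1)\Phi(z)\,dz/(2\pi\sqrt{-1}z)=I_N(q^i a_1,a_2,\dots,a_{2N})$, to obtain $\sum_{i=0}^{N-1}g_i(a_1)I_N(q^i a_1,a_2,\dots,a_{2N})=0$ at once. (This requires the parameters to lie in the range where all the shifted integrals converge; otherwise the identity follows by analytic continuation in $a_1$.)

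To establish the expansion I would first check that $F(z)$ is a symmetric Laurent polynomial of degree at most $N-1$. Starting from \eqref{eq:F01}, the $m=N$ term vanishes since $\sigma_N-\sigma_N=0$, and pairing the terms $m$ and $2N-m$, using the antisymmetry $\sigma_m-\sigma_{2N-m}=-(\sigma_{2N-m}-\sigma_{m})$ together with
\[
  \frac{z}{1-z^{2}}\bigl(z^{m-N}-z^{N-m}\bigr)=z^{m-N+1}\bigl(1+z^{2}+\cdots+z^{2(N-m-1)}\bigr),
\]
removes the apparent poles of $z/(1-z^2)$ at $z=\pm1$ and leaves a symmetric Laurent polynomial whose exponents run over $-(N-1),\dots,N-1$. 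Since $\tilde{\phi}_i(z;a_1)=(a_1z;q)_i(a_1z^{-1};q)_i$ is a symmetric Laurent polynomial of exact degree $i$, the family $\{\tilde{\phi}_i(z;a_1)\}_{i=0}^{N-1}$ is a basis of that $N$-dimensional space, so the expansion exists and is unique (for generic $a_1$, e.g. $a_1^{2}\notin q^{\mathbb{Z}}$).

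The coefficients are then obtained by $q$-Newton forward interpolation: by the vanishing property \eqref{eq:vanish}, evaluating the expansion successively at the nodes $z=q^{k}a_1$, $k=0,1,\dots,N-1$ — equivalently $x=\tfrac12(q^{k}a_1+q^{-k}a_1^{-1})$ — gives a lower-triangular linear system whose inversion expresses $g_i(a_1)$ as an explicit weighted sum of the values $F(q^{j}a_1)$ for $0\le j\le i$, the weights being the $q$-binomial/$q$-Pochhammer factors furnished by Ismail's expansion theorem and Cooper's formula (both already cited). Substituting $F(q^{j}a_1)=\dfrac{q^{j}a_1}{1-q^{2j}a_1^{2}}\sum_{m=0}^{2N}(-1)^m(\sigma_m-\sigma_{2N-m})(q^{j}a_1)^{m-N}$ from \eqref{eq:F01} and condensing the Pochhammer bookkeeping (using for instance $(1-q^{2j}a_1^{2})/(q^{2j}a_1^{2};q)_{i-j+1}=1/(q^{2j+1}a_1^{2};q)_{i-j}$ and the $q$-Vandermonde--type recombination of the two $(q;q)$ factors) yields the closed form \eqref{gi_new}.

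The main obstacle is this last simplification: reconciling the raw output of the interpolation inversion with the particular double sum \eqref{gi_new} is a somewhat intricate $q$-Pochhammer calculation. As a shortcut one can instead \emph{verify} the expansion by checking equality at the $N$ nodes $z=q^{k}a_1$, trading the inversion for $N$ evaluations. One further remark: no independent check is needed that \eqref{gi_new} equals the coefficient $g_{N,i}(a_1)$ of \eqref{gCoeff_N} appearing in Proposition~\ref{prop:q_diff_eqn}, since both expand the same polynomial $E^{+}_x(W+\Delta yV)-E^{-}_x(W-\Delta yV)$ in the same basis $\{\tilde{\phi}_i(\cdot;a_1)\}$, and are therefore equal by uniqueness (after the obvious reindexing).
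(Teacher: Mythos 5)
Your proposal is correct and follows essentially the same route as the paper: reduce the theorem via Lemma~\ref{lem:FR} and \eqref{eq:T2} to the finite expansion $F(z)=\sum_{i=0}^{N-1}g_i\,\tilde{\phi}_i(z;a_1)$, then determine the $g_i$ by evaluating at the nodes $z=q^ka_1$ and inverting the resulting lower-triangular system (this is precisely the paper's Lemma~\ref{lem:G}, which supplies the explicit inverse entries $c_{ij}$ in \eqref{eq:g04-1} that you leave as an unperformed ``Pochhammer bookkeeping'' step). The only slip is your closing remark: \eqref{gi_new} agrees with \eqref{gCoeff_N} only up to the overall constant $-q^{-N/2}$ (as the paper itself notes), which is harmless since that constant multiplies every coefficient in \eqref{eq:g02-1}.
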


\begin{remark}
Equation (\ref{gi_new}) for $ g_{i}(a) $ exactly coincides with expression \eqref{gCoeff_N} for $g_{N,i}(a)$ 
up to the constant $-q^{-N/2}$ which presents no problems since the constant is multiplied over all coefficients 
$g_{N,i}(a)$, $0\le i\le N-1$.
\end{remark}

According to (\ref{eq:FR2}) of Lemma \ref{lem:FR} and (\ref{eq:T2}), in order to prove Theorem~\ref{thm:gI} it is suffices to show the following. 
\begin{lemma}\label{lem:G}
The function $F(z)$ can be expanded as 
\begin{gather}
  F(z)=\sum_{i=0}^{N-1}g_i\,\tilde{\phi}_i(z;a), 
\label{eq:g01}
\end{gather}
where the constant $ g_i=g_{i}(a) $ is given by \eqref{gi_new}.
\end{lemma}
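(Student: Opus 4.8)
The plan is to establish \eqref{eq:g01} in two stages: first that the expansion exists and is unique, then that its coefficients are given by \eqref{gi_new}, the latter obtained by evaluating the identity at the $q$-lattice points $z=q^{j}a$, $j=0,\dots,N-1$, where the vanishing property \eqref{eq:vanish} trivialises the linear algebra. For the first stage, note that $\tilde{\phi}_i(z;a)=(az;q)_i(az^{-1};q)_i$ is a Laurent polynomial symmetric under $z\to z^{-1}$ of exact degree $i$, with leading behaviour $(-a)^{i}q^{\binom{i}{2}}z^{i}$ as $z\to\infty$; hence $\{\tilde{\phi}_i(z;a)\}_{i=0}^{N-1}$ is a basis of the $N$-dimensional space of $z\to z^{-1}$ symmetric Laurent polynomials of degree at most $N-1$, i.e.\ of polynomials in $x$ of degree $\le N-1$. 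Formula \eqref{eq:F01} exhibits $F(z)$ as a member of precisely this space: the numerator $\sum_{m=0}^{2N}(-1)^m(\sigma_m-\sigma_{2N-m})z^{m-N}$ is antisymmetric under $z\to z^{-1}$ (in particular its $m=N$ term vanishes), so it vanishes at $z=\pm1$ and is divisible by $1-z^{2}$, leaving a symmetric Laurent polynomial of top degree $N-1$. Therefore unique constants $g_0,\dots,g_{N-1}$ with $F=\sum_{i=0}^{N-1}g_i\tilde{\phi}_i(\cdot\,;a)$ exist, and only their explicit evaluation remains.

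For the second stage I would set $z=q^{j}a$ for $j=0,\dots,N-1$. By \eqref{eq:vanish} one has $\tilde{\phi}_i(q^{j}a;a)=0$ whenever $i>j$, so \eqref{eq:g01} collapses to the lower-triangular system
\[
  F(q^{j}a)=\sum_{i=0}^{j}g_i\,\tilde{\phi}_i(q^{j}a;a),\qquad j=0,\dots,N-1,
\]
with invertible diagonal $\tilde{\phi}_j(q^{j}a;a)=(q^{j}a^{2};q)_j(q^{-j};q)_j$ for generic parameters. Using $\tilde{\phi}_i(q^{j}a;a)=(q^{j}a^{2};q)_i(q^{-j};q)_i$ together with $(q^{-j};q)_i=(-1)^{i}q^{\binom{i}{2}-ij}(q;q)_j/(q;q)_{j-i}$, and the closed form $F(q^{j}a)=\dfrac{q^{j}a}{1-q^{2j}a^{2}}\sum_{m=0}^{2N}(-1)^m(\sigma_m-\sigma_{2N-m})(q^{j}a)^{m-N}$ read off from \eqref{eq:F01}, one inverts this system by forward substitution. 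The inverse matrix is again of $q$-binomial type, and the verification that it really is the inverse reduces, after cancelling the Pochhammer factors, to a terminating $q$-Chu--Vandermonde (equivalently $q$-Gauss) summation. Substituting the value of $F(q^{j}a)$ and reorganising the resulting double sum over $j$ and $m$ then produces exactly \eqref{gi_new}; the identity extends from generic to all parameters by polynomiality.

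The main obstacle is precisely this last step: guessing the correct inversion pair, executing the $q$-hypergeometric summation that certifies it, and tracking the powers of $q$ and $a$ so the output lands on \eqref{gi_new} verbatim — everything else is routine bookkeeping. An essentially equivalent and perhaps more transparent route is to first rewrite $F$ via the finite geometric sum $\frac{z^{m-N+1}-z^{N-m+1}}{1-z^{2}}=U_{N-m-1}(x)$, obtaining $F(z)=\sum_{m=0}^{N-1}(-1)^m(\sigma_m-\sigma_{2N-m})U_{N-m-1}(x)$ with $U_p$ the Chebyshev polynomial of the second kind, then substitute the connection formula expanding each $U_p(x)$ in the basis $\{\tilde{\phi}_i(z;a)\}$ and collect the coefficient of $\tilde{\phi}_i$; the needed connection coefficients are governed by the same $q$-Vandermonde identity, and comparison again yields \eqref{gi_new}.
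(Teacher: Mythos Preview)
Your approach is essentially the paper's own: establish that $F$ lies in the span of $\{\tilde\phi_i(\cdot;a)\}_{i=0}^{N-1}$ by degree and symmetry considerations, evaluate \eqref{eq:g01} at the lattice points $z=q^{j}a$ to obtain a lower-triangular linear system via the vanishing property \eqref{eq:vanish}, invert it explicitly, and substitute \eqref{eq:F01}. The only differences are cosmetic: you spell out more carefully why $F$ is a symmetric Laurent polynomial of degree $N-1$ and you name the underlying identity (a terminating $q$-Chu--Vandermonde) that certifies the inverse, whereas the paper simply records the inverse entries $c_{ij}$ in closed form (your matrix) and says they ``can be directly confirmed''; your alternative Chebyshev route is not in the paper but leads to the same computation.
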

\begin{proof} 
Since we have the expression (\ref{eq:g01}), and taking account of the degree of $F(z)$ as a Laurent polynomial,  
we can now evaluate $g_i$. From the vanishing property of $\tilde{\phi}_i(z;a)$ in (\ref{eq:vanish}), we have  
\begin{equation*}
   F(q^{i}a)=\sum_{j=0}^{i}g_j\,\tilde{\phi}_j(q^{i}a;a) ,\quad i=0,1,\ldots, N-1 .
\label{eq:sim_eqns}
\end{equation*}
Conversely we can solve the above simultaneous linear equations relating $g_j$ and $F(q^{i}a)$, so that $g_i$ 
can be expressed as 
\begin{equation}
   g_i=\sum_{j=0}^i c_{ij}\,F(q^{j}a) ,
\label{eq:g03}
\end{equation}
where $(c_{ij})_{i,j=0}^{N-1}$ is the inverse of the lower triangular matrix $\big(\tilde{\phi}_j(q^{i}a;a)\big)_{i,j=0}^{N-1}$. 
This is explicitly written as $c_{ij}=0$ if $i<j$ and 
\begin{equation}
  c_{ij} = \frac{(-1)^j q^{i+\binom{j}{2}}}{(q;q)_{i-j}(q;q)_{j}(q^{2j+1}a^2;q)_{i-j}(q^{j}a^2;q)_{j}},
           \quad\mbox{if}\quad i\ge j,
\label{eq:g04-1}
\end{equation}
which can be directly confirmed. From (\ref{eq:F01}), (\ref{eq:g03}) and (\ref{eq:g04-1}) 
we therefore obtain the explicit form of $g_i$ as
\begin{equation*}
g_i  = \sum_{j=0}^i\frac{(-1)^j a q^{i+\binom{j+1}{2}}}{(q;q)_{i-j}(q;q)_{j}(q^{2j}a^2;q)_{i-j+1}(q^{j}a^2;q)_{j}}
        \sum_{m=0}^{2N}(-1)^m (\sigma_m-\sigma_{2N-m}) (q^ja)^{m-N} .
\end{equation*}
Therefore we obtain (\ref{gi_new}). 
\end{proof}

\begin{remark} 
Since $F(z)=F_+(z)+F_-(z)$ we can also express $g_i$ without using the elementary symmetric functions
$\sigma_{i}$, $i=1,\cdots,k$ as follows: 
\begin{equation*}
  g_i=\sum_{j=0}^i c_{ij}\,\big(F_+(q^{j}a)+F_-(q^{j}a)\big) ,
\end{equation*} 
which is somewhat simpler as one can see from the example of $ g_1 $ for the case $ N=3 $
\begin{equation*}
 g_1(a_i)= \displaystyle 
           \frac{q\prod_{\substack{1\le j\le 6 \\ j\ne i}}(1-a_ia_j)}{a_i^2(1-q)(1-qa_i^2)}
          -\frac{\prod_{\substack{1\le j\le 6 \\ j\ne i}}(1-qa_ia_j)}{qa_i^2(1-q)(1-q^2a_i^2)}
          +\frac{\prod_{\substack{1\le j\le 6 \\ j\ne i}}(a_j-qa_i)}{qa_i(1-qa_i^2)(1-q^2a_i^2)} ,
\end{equation*}
when compared to the middle coefficient in \eqref{N=3int_Recur:a}.
\end{remark}

\begin{remark}
The expression (\ref{gi_new}) for $g_i$ is indeed explicit, however as it arises from the inversion a matrix
it may yield a complicated expression whereas $g_{N-1}$ has the simple form
\begin{equation*}
  g_{N-1}=(-a)^{-N}q^{-\binom{N-1}{2}}(1-a_1a_2\cdots a_{2N}).
\end{equation*} 
\end{remark}

In addition to the linear $q$-difference equations given by either \eqref{I_q-DifferenceEqn} or \eqref{eq:g02-1}
we will derive a mixed, linear $q$-difference equation for $I_N$. 
\begin{theorem}\label{thm:C}
The integral $I_N$ satisfies the $(N-1)$-th order linear, partial $q$-difference equation
\begin{equation}
  I_N(a_1,\ldots,a_{2N}) = \sum_{i=1}^{N-1}b_i\, I_N(\ldots,q^{-1}a_i,\ldots) ,
\label{2nd_IN}
\end{equation}
where
\begin{equation}
  b_i = \frac{\prod_{m=N}^{2N}(1-q^{-1}a_ia_m)}{(1-q^{1-N}a_1a_2\cdots a_{2N})\prod_{\substack{1\le j\le N-1 \\ j\ne i}}(1-a_ia_j^{-1})} .
\label{eq:bi_defn}
\end{equation} 
Or equivalently, using the replacements $a_i \mapsto qa_i$ ($1\le i\le N-1$) in \eqref{2nd_IN}, then
\begin{equation}
 I_N( \underbrace{qa_1,\ldots,qa_{N-1}}_{\mbox{\tiny$q$\rm-shift}} ,a_N,\ldots,a_{2N})
  = \sum_{i=1}^{N-1}c_i\, I_N( \,\underbrace{qa_1,\ldots,qa_{i-1}}_{\mbox{\tiny$q$\rm-shift}},a_i,\underbrace{qa_{i+1},\ldots,qa_{N-1}}_{\mbox{\tiny$q$\rm-shift}} ,a_N,\ldots,a_{2N}),
\label{eq:IN-c} 
\end{equation}
where the coefficient $c_i$ is
\begin{equation}
  c_i = \frac{\prod_{m=N}^{2N}(1-a_ia_m)}{(1-a_1a_2\cdots a_{2N})\prod_{\substack{1\le j\le N-1 \\ j\ne i}}(1-a_ia_j^{-1})}.
\label{eq:coeff-c}
\end{equation} 
\end{theorem}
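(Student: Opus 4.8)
The plan is to read \eqref{eq:IN-c} off a second expansion of the Laurent polynomial $F(z)$---the companion of Lemma~\ref{lem:G}, which is precisely what Lemma~\ref{lem:C} supplies---and then to recover \eqref{2nd_IN} from it by a change of parameters. I expect Lemma~\ref{lem:C} to take the form
\begin{equation*}
  F(z)=\kappa\Bigg[\prod_{j=1}^{N-1}\tilde{\phi}_1(z;a_j)-\sum_{i=1}^{N-1}c_i\!\!\prod_{\substack{1\le j\le N-1\\ j\ne i}}\!\!\tilde{\phi}_1(z;a_j)\Bigg],
  \qquad \kappa:=\frac{(-1)^{N-1}(1-a_1a_2\cdots a_{2N})}{a_1a_2\cdots a_{N-1}},
\end{equation*}
with $c_i$ exactly the coefficient in \eqref{eq:coeff-c}. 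The point is that $F(z)$, being invariant under $z\mapsto z^{-1}$ with extreme monomials $z^{\pm(N-1)}$ (visible from \eqref{eq:F01}), is a polynomial of degree $N-1$ in $x=\tfrac12(z+z^{-1})$; and since $\tilde{\phi}_1(z;a_j)=2a_j(x_j-x)$ with $x_j=\tfrac12(a_j+a_j^{-1})$, the $N-1$ products $\prod_{j\ne i}\tilde{\phi}_1(z;a_j)$ are, up to nonzero constants, the Lagrange basis for the pairwise distinct nodes $x_1,\dots,x_{N-1}$, and together with the unique degree-$(N-1)$ member $\prod_{j=1}^{N-1}\tilde{\phi}_1(z;a_j)$ they span the $N$-dimensional space of polynomials in $x$ of degree $\le N-1$. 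Hence $F$ has a unique expansion of this shape, and its coefficients are pinned down by evaluating at $z=a_i$ (where \eqref{eq:vanish} kills every term except the one indexed by $i$) and by matching leading coefficients in $x$ as $z\to\infty$.

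The single genuinely new computation is the value $F(a_i)$, and it is short: writing $F=F_-+F_+$, the factor $(1-a_iz^{-1})$ in the numerator of $F_-$ forces $F_-(a_i)=0$, while in $F_+(a_i)$ the identity $(1-a_i^2)/(a_i^{-1}-a_i)=a_i$ gives $F_+(a_i)=a_i^{1-N}\prod_{k\ne i}(1-a_ia_k)$. Dividing by $\prod_{j\ne i}\tilde{\phi}_1(a_i;a_j)=\prod_{j\ne i}(1-a_ia_j)(1-a_ja_i^{-1})$ and stripping off the normalisation $\kappa$ (which is fixed by the leading-coefficient comparison), a routine rearrangement---repeatedly trading $\tilde{\phi}_1$-products for products of $(1-a_ia_j)$ and $(1-a_ia_j^{-1})$---returns \eqref{eq:coeff-c}. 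This bookkeeping, rather than any conceptual difficulty, is where essentially all the effort lies.

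Granting Lemma~\ref{lem:C}, equation \eqref{eq:IN-c} is immediate: multiply the displayed identity by $\Phi(z)\,dz/(2\pi\sqrt{-1}z)$ and integrate over the contour $C$ (Hankel type for $N$ odd, exactly as in Lemma~\ref{lem:FR}); the left-hand side vanishes by \eqref{eq:FR2}. On the right, $\int_C\prod_{j\in S}\tilde{\phi}_1(z;a_j)\,\Phi(z)\,dz/(2\pi\sqrt{-1}z)$ equals, by the obvious generalisation of \eqref{eq:T2}, the integral $I_N$ with $a_j$ replaced by $qa_j$ for $j\in S$; taking $S=\{1,\dots,N-1\}$ and then $S=\{1,\dots,N-1\}\setminus\{i\}$ and cancelling $\kappa\ne0$ yields \eqref{eq:IN-c}. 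Since $\Phi$ is merely multiplied by a polynomial, its pole configuration is unchanged, so the contour-deformation and convergence discussion of Lemma~\ref{lem:FR} carries over verbatim after a harmless shrinking of the parameter domain (or one continues analytically in the $a_j$).

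Finally, \eqref{2nd_IN} follows by substituting $a_i\mapsto q^{-1}a_i$ for $1\le i\le N-1$ in \eqref{eq:IN-c}: the left side collapses to $I_N(a_1,\dots,a_{2N})$, the $i$-th term becomes $c_i\big|_{a\to q^{-1}a}\,I_N(\dots,q^{-1}a_i,\dots)$, and a one-line check gives $c_i\big|_{a\to q^{-1}a}=b_i$---each factor of $\prod_{m=N}^{2N}(1-a_ia_m)$ acquires a $q^{-1}$, the factor $1-a_1a_2\cdots a_{2N}$ becomes $1-q^{1-N}a_1a_2\cdots a_{2N}$, and $\prod_{j\ne i}(1-a_ia_j^{-1})$ is unaffected. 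Thus the main obstacle is Lemma~\ref{lem:C} itself: identifying the expansion basis, carrying out the interpolation, and---above all---transporting the constant $\kappa$ and the coefficients $c_i$ correctly through the algebra; once that is settled the theorem is formal. One may note in passing that the $q$-difference equation \eqref{eq:IN-c} and the one-variable equation \eqref{eq:g02-1} are inter-convertible via the general Fundamental relation \eqref{eq:FR1}, as foreshadowed in the remark preceding Lemma~\ref{lem:FR}.
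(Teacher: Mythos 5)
Your proposal is correct and follows essentially the same route as the paper: you have in effect reconstructed Lemma~\ref{lem:C} (your $\kappa$ is the paper's $C_0$ and your $-\kappa c_i$ is the paper's $C_i$), determined the coefficients exactly as the paper does (leading-coefficient comparison plus evaluation at $z=a_i$ with $F_-(a_i)=0$), and then integrated against $\Phi$ using \eqref{eq:FR2} and the generalisation of \eqref{eq:T2}. The only differences are cosmetic: you supply the Lagrange-basis justification for the existence and uniqueness of the expansion, which the paper leaves implicit, and you derive \eqref{2nd_IN} from \eqref{eq:IN-c} rather than the reverse.
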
 

\begin{remark} 
In particular, if $N=2$, then the above equation is
\begin{equation*}
  I_2(qa_1,a_2,a_3,a_4) = \frac{(1-a_1a_2)(1-a_1a_3)(1-a_1a_4)}{(1-a_1a_2a_3a_4)}I_2(a_1,a_2,a_3,a_4),
\end{equation*}
which is identical to \eqref{AWintegral}. Even for $N=3$ this equation is still relatively simple:
\begin{equation*}
 I_3(qa_1,qa_2,\ldots) = \frac{\prod_{m=3}^6(1-a_1a_m)}{(1-a_1a_2\cdots a_6)(1-a_1a_2^{-1})}I_3(a_1,qa_2,\ldots)
                        +\frac{\prod_{m=3}^6(1-a_2a_m)}{(1-a_1a_2\cdots a_6)(1-a_2a_1^{-1})}I_3(qa_1,a_2,\ldots). 
\end{equation*}
This can also be derived from \eqref{N=3int_Recur:a} along with another copy under $ a_1 \leftrightarrow a_2 $
and multiple use of the identity \eqref{Iidentity}. Equation \eqref{eq:IN-c} along with \eqref{eq:coeff-c} is
the simplest $q$-difference equation we can find as the coefficients are simple products.
\end{remark}

In order to prove Theorem~\ref{thm:C} using (\ref{eq:FR2}) of Lemma \ref{lem:FR} and (\ref{eq:T2}) we require the following result.
\begin{lemma}\label{lem:C}
The function $F(z)$ can be expanded as
\begin{equation}
  F(z) = C_0\prod_{i=1}^{N-1}\tilde{\phi}_1(z;a_i)+\sum_{j=1}^{N-1}C_j\prod_{\substack{1\le i\le N-1 \\ i\ne j}}\,\tilde{\phi}_1(z;a_i) ,
\label{eq:c01}
\end{equation}
where the coefficients are expressed as
\begin{equation}
  C_0 = \frac{(-1)^{N-1}}{a_1a_2\cdots a_{N-1}}(1-a_1a_2\cdots a_{2N})
\quad \mbox{\rm and} \quad 
  C_i = \frac{(-1)^{N}}{a_1a_2\cdots a_{N-1}}\frac{\prod_{m=N}^{2N}(1-a_ia_m)}{\prod_{\substack{1\le j\le N-1 \\ j\ne i}}(1-a_ia_j^{-1})} ,
\quad i = 1,\ldots, N-1 .  
\label{eq:c02}
\end{equation} 
\end{lemma}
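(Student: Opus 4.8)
The plan is to exploit the characterizing properties of $F(z)$ established in \eqref{eq:F01}: it is a symmetric Laurent polynomial of the form $F(z) = \frac{z}{1-z^2}\sum_{m=0}^{2N}(-1)^m(\sigma_m-\sigma_{2N-m})z^{m-N}$, hence a Laurent polynomial in $z+z^{-1}$ of degree $N-1$ (the apparent pole at $z=1$ cancels because $\sum_m(-1)^m(\sigma_m-\sigma_{2N-m})=0$ by the antisymmetry $m\leftrightarrow 2N-m$). Since each $\tilde{\phi}_1(z;a_i)/a_i = a_i+a_i^{-1}-z-z^{-1}$ is linear in $z+z^{-1}$, the products appearing on the right-hand side of \eqref{eq:c01} form a basis of the space of symmetric Laurent polynomials of degree $\le N-1$ in $z+z^{-1}$ — indeed, $\prod_{i=1}^{N-1}\tilde{\phi}_1(z;a_i)$ has exact degree $N-1$, and for each $j$ the omitted-factor product has degree $N-2$, and these $N$ polynomials are linearly independent provided the $a_i+a_i^{-1}$ are distinct. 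So the expansion \eqref{eq:c01} exists and is unique; it remains only to identify the coefficients $C_0,\dots,C_{N-1}$.

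To pin down the $C_j$ for $j\ge 1$, I would evaluate \eqref{eq:c01} at $z=a_j$ (equivalently at the points where $\tilde\phi_1(z;a_j)$ vanishes, cf.\ \eqref{eq:vanish} with $n=1$). All terms with an explicit factor $\tilde{\phi}_1(z;a_j)$ drop out — that is, the $C_0$ term and every $C_i$ term with $i\ne j$ — leaving $F(a_j) = C_j\prod_{i\ne j,\,1\le i\le N-1}\tilde{\phi}_1(a_j;a_i)$. Now $F(a_j)$ is computed directly from the definition $F=F_++F_-$: the factor $F_-(z)=z^N\prod_{i=1}^{2N}(1-a_iz^{-1})/(z-z^{-1})$ has a zero from $(1-a_jz^{-1})$ that cancels against part of $z-z^{-1}$ after using $z=a_j$, wait — more carefully, at $z=a_j$ one has $1-a_jz^{-1}=0$, so $F_-(a_j)=0$, and thus $F(a_j)=F_+(a_j)=a_j^{-N}\prod_{i=1}^{2N}(1-a_ia_j)/(a_j^{-1}-a_j)$. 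Meanwhile $\tilde\phi_1(a_j;a_i)=(1-a_ia_j)(1-a_ia_j^{-1})$, and $\prod_{i=1}^{N-1}$ of the $(1-a_ia_j)$ combines with the $\prod_{i=1}^{2N}(1-a_ia_j)$ from $F_+(a_j)$ and the residual factor to produce exactly $\prod_{m=N}^{2N}(1-a_ja_m)/\prod_{i\ne j}(1-a_ja_i^{-1})$ together with the stated sign and monomial $(-1)^N/(a_1\cdots a_{N-1})$; bookkeeping the powers of $a_j$ and the factor $(a_j^{-1}-a_j)=-a_j^{-1}(1-a_j^2)=-a_j^{-1}(1-a_ja_j^{-1})\cdot$(wait, that's not right, but) $=-a_j^{-1}\tilde\phi_1(a_j;a_j)$... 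I will instead simply match powers honestly: writing everything over a common denominator, the residual $(a_j^{-1}-a_j)$ cancels against the ``$i=j$'' factor one would have had in the full product, which is the standard Askey–Wilson-type cancellation. This yields formula \eqref{eq:c02} for $C_i$.

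Finally, $C_0$ is obtained by comparing the top-degree coefficient (the coefficient of $(z+z^{-1})^{N-1}$, or equivalently the leading behaviour as $z\to\infty$). On the left, from \eqref{eq:F01} the leading term is governed by the $m=2N$ term, giving leading coefficient $(-1)^{2N}(\sigma_{2N}-\sigma_0)=\sigma_{2N}-1=a_1a_2\cdots a_{2N}-1$ in front of $z^{N-1}$ (after the $z/(1-z^2)$ prefactor contributes $-z^{-1}$ at large $z$); on the right, only the $C_0\prod_{i=1}^{N-1}\tilde\phi_1(z;a_i)$ term has degree $N-1$, with leading coefficient $C_0\prod_{i=1}^{N-1}(-a_i)\cdot(\text{sign from }z+z^{-1})$. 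Equating gives $C_0=(-1)^{N-1}(1-a_1\cdots a_{2N})/(a_1\cdots a_{N-1})$. The main obstacle — really the only delicate point — is the careful tracking of signs and powers of $q$-less monomials $a_j$ in the residue-type evaluation at $z=a_j$; there is no deep difficulty, just the need to be scrupulous about the cancellation between the $(1-a_ja_j^{-1})$-type factor implicit in $a_j^{-1}-a_j$ and the product over $i$, and about the overall normalization $1/(a_1\cdots a_{N-1})$ which must come out uniformly for both $C_0$ and the $C_i$.
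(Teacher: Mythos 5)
Your proposal is correct and follows essentially the same route as the paper: establish that $F(z)$ is a symmetric Laurent polynomial of degree $N-1$ in $z+z^{-1}$ so the expansion exists, read off $C_0$ from the leading coefficient, and obtain each $C_i$ by evaluating at $z=a_i$ where $F_-(a_i)=0$ and the vanishing of $\tilde{\phi}_1(a_i;a_i)$ isolates a single term. The only addition is your explicit linear-independence/basis argument for uniqueness (which the paper leaves implicit), and although your bookkeeping of the factor $(1-a_ja_i^{-1})=-a_ja_i^{-1}(1-a_ia_j^{-1})$ is left unfinished, carrying it out does produce the stated sign $(-1)^N$ and normalisation $1/(a_1\cdots a_{N-1})$.
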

\begin{proof} 
Given we have the expression (\ref{eq:c01}) and taking account of the degree of $F(z)$ as a Laurent polynomial,  
we can evaluate $C_i$. Comparing the coefficients of the highest-degree terms of both sides of (\ref{eq:c01}), we have
\begin{equation*}
  1-a_1a_2\cdots a_{2N}=C_0(-1)^{N-1}a_1a_2\cdots a_{N-1} ,
\end{equation*} 
which is equivalent to the expression (\ref{eq:c02}) for $C_0$. Next if we put $z=a_i$ ($i=1,2,\ldots,N-1$) and
use the vanishing property (\ref{eq:vanish}) we have  
\begin{equation}
  F_-(a_i)+F_+(a_i) = C_i\prod_{\substack{1\le j\le N-1 \\ j\ne i}}\,\tilde{\phi}_1(a_i;a_j).
\label{eq:c03}
\end{equation}
Since we have
\begin{equation*}
  F_-(a_i) = 0, \quad F_+(a_i) = \frac{\prod_{m=0}^{2N}(1-a_ia_m)}{a_i^{N-1}(1-a_i^2)} ,
\quad
  \tilde{\phi}_1(a_i;a_j) = (1-a_ja_i)(1-a_ja_i^{-1}) ,
\end{equation*} 
from (\ref{eq:c03}) we conclude that
\begin{equation*}
 \frac{\prod_{m=0}^{2N}(1-a_ia_m)}{a_i^{N-1}(1-a_i^2)} = C_i\prod_{\substack{1\le j\le N-1 \\ j\ne i}}(-a_ja_i^{-1})(1-a_ia_j)(1-a_ia_j^{-1}) ,
\end{equation*} 
which is equivalent to the expression (\ref{eq:c02}) for $C_i$. 
\end{proof}

As an application of Theorem \ref{thm:C} we shall show that $I_N$ satisfies a first-order vector-valued $q$-difference system. 
Let $T_{a}$ be the $q$-shift operator with respect to $a\mapsto qa$, i.e. $T_{a}f(a)=f(qa)$ for a function $f(a)$. 
\begin{corollary}\label{cor:1st}
Suppose that $\vec {\mathscr I}=({\mathscr I}_1,{\mathscr I}_2,\ldots,{\mathscr I}_{N-1})$ is the row-vector function defined by 
\begin{align*}
 {\mathscr I}_1 & := I_N(a_1,\underbrace{qa_2,\ldots,qa_{N-1}}_{\mbox{\tiny$q$\rm-shift}},a_N,\ldots,a_{2N}) ,
\\
 {\mathscr I}_i & := I_N(a_1,\underbrace{qa_2,\ldots,qa_{i-1}}_{\mbox{\tiny$q$\rm-shift}},a_i,
                             \underbrace{qa_{i+1},\ldots,qa_{N-1}}_{\mbox{\tiny$q$\rm-shift}},a_N,\ldots,a_{2N}) ,\quad 2\le i\le N-1 .
\end{align*}
Then $\vec {\mathscr I}$ satisfies the $q$-difference system
\begin{equation}
  T_{a_1}\vec {\mathscr I} = \vec {\mathscr I}\, A,
\label{eq:1st-diff}
\end{equation}
where $A$ is a $(N-1)\times (N-1)$ matrix defined by its Gauss decomposition
\begin{equation*}
A = \begin{pmatrix}
       c_1&   a_1a_2^{-1}&a_1a_3^{-1}&\cdots&a_1a_{N-1}^{-1}  \\
       &   d_{2}&&&   \\
       &   &d_{3}& &    \\
       &   & &\ddots&    \\
       &   & & &d_{N-1}\!\!    \\
    \end{pmatrix}
    \begin{pmatrix}
       \ 1&   &&&   \\
       c_2&  1 &&&   \\
       c_3&   &\ \ 1& &    \\
        \vdots &   & &\ddots &    \\
       c_{N-1} &   & & &\  1    \\
    \end{pmatrix} .
\end{equation*}
Here entries not shown are zero, 
while $c_i$ ($1\le i\le N-1$) are given by \eqref{eq:coeff-c} and 
$d_i$ $(2\le i\le N-1)$ are given by $d_i=\tilde{\phi}(a_i;a_1)=(1-a_1a_i)(1-a_1a_i^{-1})$. 
\end{corollary}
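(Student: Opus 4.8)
\emph{Proof proposal.} The plan is to deduce the first-order system \eqref{eq:1st-diff} by combining two facts already in hand: the partial $q$-difference equation \eqref{eq:IN-c} of Theorem~\ref{thm:C} and the elementary three-term relation \eqref{eq:T3}, and then to recognise the composition of these two steps as exactly the claimed Gauss decomposition $A=BC$, with $B$ the upper (arrowhead) factor and $C$ the lower unipotent factor. The first step is to read off how $T_{a_1}$ acts on the components of $\vec{\mathscr I}$. Since $T_{a_1}$ only sends $a_1\mapsto qa_1$, one has
\[
  T_{a_1}\mathscr I_1 = I_N(qa_1,qa_2,\ldots,qa_{N-1},a_N,\ldots,a_{2N}),
\]
which is precisely the left-hand side of \eqref{eq:IN-c}, while for $2\le i\le N-1$
\[
  T_{a_1}\mathscr I_i = I_N(qa_1,\ldots,qa_{i-1},a_i,qa_{i+1},\ldots,qa_{N-1},a_N,\ldots,a_{2N})
\]
is precisely the $i$-th summand on the right-hand side of \eqref{eq:IN-c}, the $i=1$ summand there being $c_1\mathscr I_1$. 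Hence Theorem~\ref{thm:C} reads
\[
  T_{a_1}\mathscr I_1 = c_1\,\mathscr I_1 + \sum_{i=2}^{N-1}c_i\,T_{a_1}\mathscr I_i .
\]

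Next I would eliminate the remaining shifted integrals $T_{a_1}\mathscr I_i$, $i\ge 2$, using \eqref{eq:T3} applied with the background parameters equal to those of $\mathscr I_i$ and the two active parameters taken to be $a_1$ and $a_i$ (so the pair $(a_i,a_j)$ in \eqref{eq:T3} becomes $(a_1,a_i)$). Shifting $a_1$ in $\mathscr I_i$ produces $T_{a_1}\mathscr I_i$, while shifting $a_i$ in $\mathscr I_i$ fills the one remaining unshifted slot among positions $2,\ldots,N-1$ and so produces $\mathscr I_1$; therefore
\[
  T_{a_1}\mathscr I_i - a_1 a_i^{-1}\,\mathscr I_1 = \tilde{\phi}_1(a_i;a_1)\,\mathscr I_i = d_i\,\mathscr I_i ,
  \qquad 2\le i\le N-1,
\]
since $\tilde{\phi}_1(a_i;a_1)=(1-a_1a_i)(1-a_1a_i^{-1})=d_i$. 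Together with the trivial identity $(\vec{\mathscr I}B)_1=c_1\mathscr I_1$ read off from the first column $(c_1,0,\ldots,0)^{\mathrm T}$ of $B$, these relations say exactly that $\vec{\mathscr I}B=(c_1\mathscr I_1,\ T_{a_1}\mathscr I_2,\ \ldots,\ T_{a_1}\mathscr I_{N-1})$.

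Finally I would assemble the two steps. Right-multiplication by the lower unipotent matrix $C$ sends a row vector $(v_1,v_2,\ldots,v_{N-1})$ to $\bigl(v_1+\sum_{l\ge 2}c_l v_l,\ v_2,\ldots,v_{N-1}\bigr)$, so
\[
  (\vec{\mathscr I}B)\,C
  = \Bigl(c_1\mathscr I_1+\sum_{l\ge 2}c_l\,T_{a_1}\mathscr I_l,\ T_{a_1}\mathscr I_2,\ldots,T_{a_1}\mathscr I_{N-1}\Bigr)
  = T_{a_1}\vec{\mathscr I},
\]
the first component being $T_{a_1}\mathscr I_1$ by the displayed form of Theorem~\ref{thm:C}. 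Hence $T_{a_1}\vec{\mathscr I}=\vec{\mathscr I}(BC)=\vec{\mathscr I}A$, and multiplying out $BC$ reproduces the arrowhead matrix with first column $\bigl(c_1+a_1\sum_{i\ge 2}c_i a_i^{-1},\ c_2 d_2,\ldots,c_{N-1}d_{N-1}\bigr)^{\mathrm T}$, first row continuing with $a_1a_2^{-1},\ldots,a_1a_{N-1}^{-1}$, and diagonal entries $d_2,\ldots,d_{N-1}$ otherwise, which is exactly the Gauss decomposition in the statement (with the $c_i$ as in \eqref{eq:coeff-c}).

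The computation is essentially bookkeeping, and the one point needing care is tracking which of $a_2,\ldots,a_{N-1}$ are $q$-shifted in each $\mathscr I_i$, so that $T_{a_1}\mathscr I_i$ is correctly identified simultaneously with a summand of \eqref{eq:IN-c} and with the $I_N(\ldots,qa_1,\ldots)$ term of \eqref{eq:T3} evaluated in the $\mathscr I_i$-background; a degenerate instance of the same bookkeeping ($i=1$) shows why $\mathscr I_1$ needs only \eqref{eq:IN-c} and not \eqref{eq:T3}. There is no analytic obstacle: $q$-shifting moves the parameters further inside the disc $|a_j|<1$ on which \eqref{eq:IN-c} and \eqref{eq:T3} were established, and the decomposition holds under the same genericity hypotheses (the $a_i$, $1\le i\le N-1$, pairwise distinct and $a_1a_2\cdots a_{2N}\neq 1$) that already appear in the coefficients $c_i$.
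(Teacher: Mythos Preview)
Your proof is correct and follows essentially the same approach as the paper: both arguments reduce the matrix identity $T_{a_1}\vec{\mathscr I}=\vec{\mathscr I}\,BC$ to the equivalent form $T_{a_1}\vec{\mathscr I}\,C^{-1}=\vec{\mathscr I}\,B$, whose first column is precisely \eqref{eq:IN-c} and whose remaining columns are instances of \eqref{eq:T3}. The only difference is presentational---the paper multiplies by $C^{-1}$ and then verifies column by column, while you build $\vec{\mathscr I}\,B$ from \eqref{eq:T3} and then apply $C$ using \eqref{eq:IN-c}---but the underlying identifications (in particular, that shifting $a_1$ in $\mathscr I_i$ gives the $i$-th summand of \eqref{eq:IN-c} and shifting $a_i$ gives $\mathscr I_1$) are identical.
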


\begin{remark}
We immediately have the determinant of the coefficient matrix as 
\begin{equation}
 \det A = c_1d_2\ldots d_{N-1} = \frac{\prod_{m=2}^{2N}(1-a_1a_m)}{1-a_1a_2\cdots a_{2N}},
\label{eq:det}
\end{equation}
which shows that the system is non-degenerate if $a_1a_2\cdots a_{2N}\ne 1$ and $a_1a_m\ne 1$ for $2\le m\le 2N$.
Repeated use of (\ref{eq:det}) suggests that the $q$-Wronskian of the $q$-difference system \eqref{eq:1st-diff} for 
$ {\mathscr I} $ can be written as the $q$-Gamma product
\begin{equation*}
  \frac{(a_1\cdots a_{2N};q)_\8}{\prod_{1\le i<j\le 2N}(a_ia_j;q)_\8} ,
\end{equation*} 
up to some factor, which can be determined. If $N=2$ this is the well-known evaluation of the Askey-Wilson integral $I_2$.
\end{remark}

\begin{proof}
Multiplying (\ref{eq:1st-diff}) on the right by the inverse of the lower triangular matrix that occurs in $A$, we have
\begin{align}
  T_{a_1}\vec {\mathscr I}\,
  \begin{pmatrix}
    1&   &&&   \\
    -c_2&  1&&&   \\
    -c_3&   &\ \ 1& &    \\
    \ \vdots&   & &\ddots  &    \\
    \ \ -c_{N-1} &   & & &\  1    \\
  \end{pmatrix}
=
  \vec {\mathscr I}\,
  \begin{pmatrix}
    c_1&   a_1a_2^{-1}&a_1a_3^{-1}&\cdots&a_1a_{N-1}^{-1}  \\
    &   d_{2}&&&   \\
    &   &d_{3}& &    \\
    &   & &\ddots&    \\
    &   & & &d_{N-1}     \\
  \end{pmatrix},
\label{eq:matrices}
\end{align}
i.e. the result we set out to prove. 
The first columns of the matrices in \eqref{eq:matrices} establish equation (\ref{eq:IN-c}) in Theorem~\ref{thm:C}.
The $i$-th columns $(2\le i\le N-1)$ of the matrices in \eqref{eq:matrices} verify the trivial equation (\ref{eq:T3}) for $I_N$. 
\end{proof}

\section{An evaluation of \texorpdfstring{$I_N$}{IN} using \texorpdfstring{$BC_1$}{BC1}-type Jackson integrals}\label{BC1evalIN}
\setcounter{equation}{0}

Our final task is to give an evaluation of $ I_{N} $ in terms of standard basic hypergeometric functions
which is a natural generalisation of the $ N=2 $ case and in some senses is the simplest and minimal 
representation of the integral.
From its definition $\Phi(z)$ can be written as 
\begin{equation*}
  \Phi(z)=P(z)Q(z) ,
\end{equation*}
where
\begin{equation*}
  P(z) := \frac{z^{N/2}\theta(z^{-N};q^{N/2})}{\prod_{j=1}^{2N}z^{1/2-s_j}\theta(a_jz^{-1};q)} ,
\quad \mbox{and} \quad 
  Q(z) := (z^{-1}-z)\prod_{j=1}^{2N}z^{1/2-s_j}\frac{(qa_j^{-1}z;q)_\8}{(a_jz;q)_\8},
\end{equation*} 
and where $q^{s_i}=a_i$. The function $P(z)$ is invariant under the shift $ z\mapsto qz $, which is confirmed 
from the quasi-periodicity of $\theta(z;q)$ as $\theta(qz;q)=-\theta(z;q)/z$. 

\begin{lemma}\label{lem:Ie}
Under the condition
\begin{equation}
   \left| q^{N-1} \right| < |a_1a_2\cdots a_{2N}| ,
\label{eq:condition01}
\end{equation}
the following holds:
\begin{equation}
  I_{\varepsilon}:=\int_{|z|=\varepsilon}\Phi(z)\frac{dz}{2\pi \sqrt{-1}z}\to 0 \quad  (\varepsilon \to 0).
\label{eq:Ie}
\end{equation}
\end{lemma}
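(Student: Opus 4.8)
The plan is to estimate the integrand $\Phi(z)$ as $z\to 0$ along the circle $|z|=\varepsilon$ and show that the contribution to $I_\varepsilon$ vanishes in the limit. Since $\Phi(z)$ is symmetric under $z\mapsto z^{-1}$, I will work with the factorisation $\Phi(z)=P(z)Q(z)$ introduced above, where $P$ is $q$-periodic ($P(qz)=P(z)$) and $Q(z)=(z^{-1}-z)\prod_{j=1}^{2N}z^{1/2-s_j}\,(qa_j^{-1}z;q)_\infty/(a_jz;q)_\infty$. The key observation is that the $q$-periodicity of $P$ means $P$ is bounded on the punctured annulus $0<|z|\le 1$ away from its (finitely many, in a fundamental domain) poles; in particular, choosing $\varepsilon$ to avoid poles, $|P(z)|$ is $O(1)$ uniformly on $|z|=\varepsilon$ as $\varepsilon\to 0$ through a suitable sequence $\varepsilon_k=|q|^k\varepsilon_0$. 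Meanwhile the infinite products $(qa_j^{-1}z;q)_\infty$ and $(a_jz;q)_\infty$ both tend to $1$ as $z\to 0$, so the nontrivial behaviour of $Q$ near $0$ is governed entirely by the power $\prod_{j=1}^{2N}z^{1/2-s_j}=z^{N-\sum_j s_j}$ together with the prefactor $(z^{-1}-z)\sim z^{-1}$.

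Putting these together, on $|z|=\varepsilon$ one has
\begin{equation}
  |\Phi(z)| = |P(z)Q(z)| \le C\,\varepsilon^{-1}\,\varepsilon^{\,N-\sum_{j}\operatorname{Re}(s_j)}\bigl(1+o(1)\bigr)
            = C\,\varepsilon^{\,N-1-\sum_{j}\operatorname{Re}(s_j)}\bigl(1+o(1)\bigr),
\end{equation}
for a constant $C$ independent of $\varepsilon$ (along the chosen geometric sequence). Since $\left|\prod_{j}a_j\right|=\left|q^{\sum_j s_j}\right|=|q|^{\sum_j\operatorname{Re}(s_j)}$, the hypothesis \eqref{eq:condition01}, namely $|q^{N-1}|<|a_1\cdots a_{2N}|$, is exactly the statement $\sum_j\operatorname{Re}(s_j)<N-1$, i.e. the exponent $N-1-\sum_j\operatorname{Re}(s_j)$ is strictly positive. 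Hence $\sup_{|z|=\varepsilon}|\Phi(z)|\to 0$, and by the standard contour estimate $|I_\varepsilon|\le\sup_{|z|=\varepsilon}|\Phi(z)|\to 0$, which is \eqref{eq:Ie}. To pass from the geometric sequence $\varepsilon_k$ to all $\varepsilon\to 0$ one uses that $P$ has the same values on $|z|=\varepsilon_k$ and $|z|=|q|^{-1}\varepsilon_k$, so the bound is in fact uniform for $\varepsilon$ in any annulus $|q|\varepsilon_0\le|z|\le\varepsilon_0$ once poles are avoided, and then let $k\to\infty$.

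The main obstacle is making precise the claim that $P(z)$ stays bounded as $z\to 0$: because $P$ is $q$-periodic it cannot literally be bounded if it has a pole, so one must either (i) restrict to radii $\varepsilon$ bounded away from the finitely many pole-orbits $z\in q^{\mathbb Z}a_j$ and the zero-of-$\theta$ orbits, and track the resulting uniform constant, or (ii) absorb the theta quotient in $P$ into an explicitly bounded expression using $\theta(qz;q)=-z^{-1}\theta(z;q)$ together with the fact that $\theta(z;q)$ is bounded away from $0$ and $\infty$ on a fundamental annulus minus small discs about its single zero. I would adopt (i): fix $\varepsilon_0$ small with $|z|=\varepsilon_0$ avoiding all poles, and run $\varepsilon=|q|^k\varepsilon_0$, $k\to\infty$; this is the cleanest route and the rest is the elementary power-counting above.
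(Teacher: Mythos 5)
Your proof is correct and follows essentially the same route as the paper's: factor $\Phi=PQ$, use the $q$-periodicity of $P$ to bound it uniformly on the circles $|z|=|q|^{M}\varepsilon'$, and power-count $Q(z)\sim z^{N-1-\sum_j s_j}$ so that condition \eqref{eq:condition01} gives a positive exponent. Your extra care about choosing $\varepsilon'$ so the circles avoid the pole orbits of $P$ is a point the paper glosses over, but otherwise the arguments coincide.
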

\begin{proof} 
We set $\varepsilon=|q|^M\varepsilon'$ for fixed $\varepsilon'>0$ and positive integer $M$. 
We will prove that $I_\varepsilon\to 0$ if $M\to +\8$. 
Since $P(z)$ is a $q$-periodic function on the compact set $|z|=|q|^M\varepsilon'$, $|P(z)|$ is bounded. 
In addition, $|Q(z)/z^{N-1-s_1-\cdots-s_{2N}}|$ is also bounded because
\begin{equation*}
  \frac{Q(z)}{z^{N-1-s_1-\cdots-s_{2N}}} = (1-z^2)\prod_{j=1}^{2N}\frac{(qa_j^{-1}z;q)_\8}{(a_jz;q)_\8}\to 1\quad (z\to 0) .
\end{equation*} 
Thus there exists $C>0$ independent of $M$ such that 
\begin{equation*}
  |\Phi(z)| = |P(z)Q(z)| < C \left| z^{N-1-s_1-\cdots-s_{2N}} \right| .
\end{equation*} 
Since $|q^{N-1}a_1^{-1}a_2^{-1}\cdots a_{2N}^{-1}|<1$ from Condition (\ref{eq:condition01}), 
putting $z=\varepsilon e^{2\pi \sqrt{-1}\zeta}$ we obtain 
\begin{align*}
 |I_{\varepsilon}| & < \int_0^1\left| \Phi \left(q^{M}\varepsilon' e^{2\pi \sqrt{-1}\zeta} \right) \right|d\zeta
\\
  & < C\int_0^1 \left| \left(q^{M}\varepsilon' e^{2\pi \sqrt{-1}\zeta} \right)^{N-1-s_1-\cdots-s_{2N}} \right|d\zeta
\\
  & < \left| q^{N-1}a_1^{-1}a_2^{-1}\cdots a_{2N}^{-1} \right|^M 
      C \int_0^1 \left| \left(\varepsilon' e^{2\pi \sqrt{-1}\zeta} \right)^{N-1-s_1-\cdots-s_{2N}} \right| d\zeta
  \to 0 \quad (M\to +\8),
\end{align*}
which establishes (\ref{eq:Ie}). 
\end{proof}

Taking account of Lemma \ref{lem:Ie}, under the Condition (\ref{eq:condition01}), $I_N$ can be written as the 
sum of residues of poles inside of $\mathbb{T}$ thus
\begin{equation*}
  I_N = \sum_{k=1}^{2N}\sum_{\nu=0}^\8 \Res_{z=a_kq^\nu}\Big[P(z)Q(z)\Big]\frac{dz}{z} .
\end{equation*} 
Since $P(z)$ is invariant under the shift $z\mapsto qz$, we have 
\begin{equation}
  I_N = \sum_{k=1}^{2N}\Big(\Res_{z=a_k}P(z)\frac{dz}{z}\Big)\sum_{\nu=0}^\8Q(a_kq^\nu) .
\label{eq:IN01}
\end{equation}
Here % $\Res_{z=a_k} P(z)\frac{dz}{z}$ is written as 
\begin{align}
 \Res_{z=a_k} P(z)\frac{dz}{z} & = \Res_{z=a_k} \frac{z^{N/2}\theta(z^{-N};q^{N/2})}{\prod_{j=1}^{2N}z^{1/2-s_j}\theta(a_jz^{-1};q)} \frac{dz}{z} ,
\notag \\
  & = \Res_{z=a_k} \frac{z^{-N/2+s_1+\cdots+s_{2N}}\theta(z^{-N};q^{N/2})}{\prod_{j=1}^{2N}\theta(a_jz^{-1};q)} \frac{dz}{z} ,
\nonumber \\
  & = \frac{a_k^{-N/2+s_1+\cdots+s_{2N}}\theta(a_k^{-N};q^{N/2})}{(q;q)_\8^2\prod_{\substack{1\le j\le 2N \\ j\ne k}}\theta(a_j/a_k;q)} ,
\quad \mbox{(See \cite[p.139, Lemma 3.1, (3.3)]{Ito_2006})} .
\label{eq:ResP}
\end{align}

On the other hand the contribution $\sum_{\nu=0}^\8Q(a_kq^\nu)$ is explained as follows. 
For an arbitrary $z\in \mathbb{C}^*$ we define the bilateral sum $J_N(z)$ by 
\begin{equation*}
   J_N(z):=\sum_{\nu=-\8}^\8Q(z q^\nu),
\label{eq:JNz}
\end{equation*}
which converges under the Condition (\ref{eq:condition01}). 
The sum $ J_N(z) $ is called the {\it $BC_1$-type Jackson integral} in the studies \cite{Ito_2009,ItoS_2008,Ito_2006}. 
In particular if we put $z=a_i$, $1\le i\le 2N$, then $J_N(a_i)$ is written as the unilateral sum
\begin{equation}
  J_N(a_i) = \sum_{\nu=0}^\8Q(a_iq^\nu),
\label{eq:JNa}
\end{equation}
because $Q(a_iq^\nu)=0$ if $\nu$ is an arbitrary negative integer. The sum $J_N(a_i)$ is called the {\it truncation} 
of $J_N(z)$. Moreover we define the function ${\mathcal J}_N(z)$ by
\begin{equation}
  {\mathcal J}_N(z) := \frac{J_N(z)}{h(z)} ,
\quad\mbox{where}\quad
  h(z) := z^{N-1-s_1-\cdots-s_{2N}}\frac{\theta(z^2;q)}{\prod_{j=1}^{2N}\theta(a_jz;q)} ,
\label{eq:JNr}
\end{equation}
which is called the {\it regularisation} of $J_N(z)$. Since the trivial poles and zeros of $J_N(z)$ are canceled 
out by multiplying $J_N(z)$ by $1/h(z)$ the function ${\mathcal J}_N(z)$ is holomorphic on $\mathbb{C}^*$, 
and ${\mathcal J}_N(z)$ is symmetric with respect to the reflection $z\to z^{-1}$. For readers familiar with 
the standard notation of the very-well-poised hypergeometric series we mention the correspondence between the 
$BC_1$-type Jackson integral ${\mathcal J}_N(z)$ and the very-well-poised $_{2N+2}\psi_{2N+2}$ hypergeometric series 
as follows:
\begin{equation}
  {\mathcal J}_N(z) = \frac{\prod_{i=1}^{2N}(qa_{i}^{-1}z)_{\infty }(qa_{i}^{-1}z^{-1})_{\infty}}{(qz^2)_{\infty }(qz^{-2})_{\infty}}
  \times{}_{2N+2}\psi_{2N+2}
\left[\!\!\begin{array}{c}
		qz,-qz,za_1,za_2,
		\ldots, za_{2N}\\
		z,-z,qz/a_{1},qz/a_{2},\ldots,qz/a_{2N} 
	  \end{array}\!\! ;q,\frac{q^{N-1}}{a_{1}a_2\cdots a_{2N}}
\right] .
\label{eq:2N+2psi}
\end{equation} 
In particular
\begin{equation}
  {\mathcal J}_N(a_1) = \frac{\prod_{i=1}^{2N}(qa_{1}a_{i}^{-1})_{\infty }(qa_{1}^{-1}a_{i}^{-1})_{\infty}}{(qa_{1}^2)_{\infty }(qa_{1}^{-2})_{\infty}}
\times{}_{2N+2}\varphi_{2N+1}
\left[\!\!\begin{array}{c}
		qa_1,-qa_1,a_1^2,a_1a_2,
		\ldots, a_1a_{2N}\\
		a_1,-a_1,qa_1/a_{2},\ldots,qa_1/a_{2N} 
	  \end{array}\!\! ;q,\frac{q^{N-1}}{a_{1}a_2\cdots a_{2N}}\right] ,
\notag
\end{equation} 
where the standard definitions \eqref{psiDefn}, \eqref{phiDefn} have been employed.
Now we return to Eq. (\ref{eq:IN01}). The following proposition sums up the argument made using 
(\ref{eq:IN01}), (\ref{eq:ResP}), (\ref{eq:JNa}) and (\ref{eq:JNr}).
\begin{proposition} 
\label{prop:IN-A}
Under the Condition {\rm (\ref{eq:condition01})} $I_N$ can be written in terms of ${\mathcal J}_N(a_k)$, $1\le k\le 2N$, by
\begin{gather}
  I_N = \sum_{k=1}^{2N}R_{k}\,{\mathcal J}_N(a_k) ,
\label{eq:IN02}
\end{gather}
where % $R_k$ is written as 
\begin{equation}
 R_k = \Big(\Res_{z=a_k}P(z)\frac{dz}{z}\Big)h(a_k) =
       \frac{a_k^{N/2-1}\theta(a_k^{-N};q^{N/2})}{(q;q)_\8^2\prod_{\substack{1\le j\le 2N \\ j\ne k}}\theta(a_ja_k;q)\theta(a_j/a_k;q)}.
\label{eq:Rk}
\end{equation}
\end{proposition}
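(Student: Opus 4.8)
\textit{Proof proposal.} The plan is to read the proposition off the chain of identities already in place. By Lemma~\ref{lem:Ie}, under Condition~(\ref{eq:condition01}) the small circle integral $I_\varepsilon$ tends to $0$ as $\varepsilon\to0$, so $I_N$ equals the sum of the residues of $\Phi(z)\,dz/z=P(z)Q(z)\,dz/z$ at the poles interior to $\mathbb{T}$, namely $z=a_kq^\nu$ for $1\le k\le 2N$ and $\nu\ge0$. Because $P(z)$ is invariant under $z\mapsto qz$, the residue at $z=a_kq^\nu$ factorises as $\big(\Res_{z=a_k}P(z)\,dz/z\big)Q(a_kq^\nu)$, which is precisely (\ref{eq:IN01}).

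First I would identify the inner sum $\sum_{\nu\ge0}Q(a_kq^\nu)$ with the truncated $BC_1$-type Jackson integral $J_N(a_k)$ of (\ref{eq:JNa}). For this one checks that $Q(a_kq^{-m})=0$ for every integer $m\ge1$: the factor $(qa_k^{-1}z;q)_\8$ occurring in $Q$ evaluates at $z=a_kq^{-m}$ to $(q^{1-m};q)_\8$, which vanishes, so the bilateral series $J_N(z)$ collapses to the unilateral one at $z=a_k$; convergence of $J_N(z)$ under Condition~(\ref{eq:condition01}) is handled by the same geometric estimate used in the proof of Lemma~\ref{lem:Ie}. Then the regularisation (\ref{eq:JNr}) gives $J_N(a_k)=h(a_k){\mathcal J}_N(a_k)$, and substituting this into (\ref{eq:IN01}) yields $I_N=\sum_{k=1}^{2N}R_k{\mathcal J}_N(a_k)$ with $R_k=\big(\Res_{z=a_k}P(z)\,dz/z\big)h(a_k)$, which is (\ref{eq:IN02}).

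The only genuine computation is the closed form (\ref{eq:Rk}) for $R_k$. Here I would combine the theta residue (\ref{eq:ResP}), a standard evaluation borrowed from \cite{Ito_2006}, with $h(z)=z^{N-1-s_1-\cdots-s_{2N}}\theta(z^2;q)/\prod_{j=1}^{2N}\theta(a_jz;q)$ evaluated at $z=a_k$: the $j=k$ factor $\theta(a_k^2;q)$ in the denominator of $h$ cancels its numerator $\theta(a_k^2;q)$, the powers of $a_k$ combine as $a_k^{-N/2+s_1+\cdots+s_{2N}}\cdot a_k^{N-1-s_1-\cdots-s_{2N}}=a_k^{N/2-1}$, and the factor $(q;q)_\8^{-2}$, the theta value $\theta(a_k^{-N};q^{N/2})$, and the two theta-products over $j\ne k$ assemble into the stated expression. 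I do not expect a real obstacle, since the proposition is essentially bookkeeping on results established above; the point that needs the most care is the truncation $J_N(a_k)=\sum_{\nu\ge0}Q(a_kq^\nu)$ together with convergence of the bilateral series, both of which rest on Condition~(\ref{eq:condition01}) and without which the residue counting underlying (\ref{eq:IN01}) would fail.
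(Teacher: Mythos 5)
Your proposal is correct and follows essentially the same route as the paper, which presents Proposition~\ref{prop:IN-A} as the summary of the chain (\ref{eq:IN01}), (\ref{eq:ResP}), (\ref{eq:JNa}), (\ref{eq:JNr}): vanishing of the small-circle integral via Lemma~\ref{lem:Ie}, residue summation with the $q$-periodic factor $P$ pulled out, truncation of the bilateral sum to $J_N(a_k)$, and regularisation to produce $R_k=\bigl(\Res_{z=a_k}P(z)\frac{dz}{z}\bigr)h(a_k)$. Your bookkeeping for (\ref{eq:Rk}) — cancellation of $\theta(a_k^2;q)$ and the exponent count $a_k^{-N/2+s_1+\cdots+s_{2N}}\cdot a_k^{N-1-s_1-\cdots-s_{2N}}=a_k^{N/2-1}$ — checks out.
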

The Jackson integral ${\mathcal J}_N(z)$ satisfies the following relation which is called the {\it Sears--Slater transformation}.
\begin{lemma}\label{lem:SS}
In our context the Sears--Slater transformation states 
\begin{equation}
  {\mathcal J}_N(z) = \sum_{i=1}^{N-1}{\mathcal J}_N(a_i)\prod_{\substack{1\le j\le N-1 \\ j\ne i}}
                      \frac{\theta(a_jz;q)\theta(a_j/z;q)}{\theta(a_ja_i;q)\theta(a_j/a_i;q)} .
\label{eq:SS1}
\end{equation}
\end{lemma}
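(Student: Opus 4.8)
The plan is to deduce \eqref{eq:SS1} by a Lagrange‑interpolation argument inside the finite‑dimensional space of holomorphic, $z\leftrightarrow z^{-1}$‑symmetric, quasi‑periodic functions on $\mathbb{C}^*$ to which both sides belong. Throughout I would work under the genericity hypotheses $a_ia_j,a_i/a_j\notin q^{\mathbb{Z}}$ for $i\ne j$ and $a_i^2\notin q^{\mathbb{Z}}$ — these make the right side of \eqref{eq:SS1} well defined and guarantee that certain zeros counted later are distinct modulo $q^{\mathbb{Z}}$ — and recover the general case at the end by analytic continuation in the $a_i$. The first step is to determine the transformation laws of ${\mathcal J}_N$. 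Re‑indexing the bilateral sum gives $J_N(qz)=J_N(z)$ at once, while the quasi‑periodicity $\theta(qw;q)=-\theta(w;q)/w$ applied to the defining product for $h(z)$ yields $h(qz)=q^{N-2}z^{2N-4}h(z)$; here the exponent comes out cleanly because $a_j=q^{s_j}$ makes $\prod_{j=1}^{2N}a_j=q^{s_1+\cdots+s_{2N}}$ cancel the $q$‑power produced by the monomial prefactor $z^{N-1-s_1-\cdots-s_{2N}}$. Combined with the holomorphy on $\mathbb{C}^*$ and the symmetry ${\mathcal J}_N(z^{-1})={\mathcal J}_N(z)$ recorded in the text just before the lemma, this gives
\[
  {\mathcal J}_N(qz)=q^{2-N}z^{4-2N}{\mathcal J}_N(z).
\]

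Next I would analyse the building blocks on the right of \eqref{eq:SS1}. Writing $\Xi_i(z):=\prod_{\substack{1\le j\le N-1\\ j\ne i}}\frac{\theta(a_jz;q)\theta(a_j/z;q)}{\theta(a_ja_i;q)\theta(a_j/a_i;q)}$, the same $\theta$‑relation shows that each pair $\theta(a_jz;q)\theta(a_j/z;q)$ acquires exactly the factor $q^{-1}z^{-2}$ under $z\mapsto qz$, and since there are $N-2$ such pairs, $\Xi_i(qz)=q^{2-N}z^{4-2N}\Xi_i(z)$. Each $\Xi_i$ is visibly holomorphic on $\mathbb{C}^*$ and symmetric under $z\leftrightarrow z^{-1}$, and because $\theta(1;q)=0$ one has the interpolation property $\Xi_i(a_k)=\delta_{ik}$ for $1\le i,k\le N-1$. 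Thus ${\mathcal J}_N$ and every $\Xi_i$ lie in the space $\mathcal V$ of holomorphic, $z\leftrightarrow z^{-1}$‑symmetric functions on $\mathbb{C}^*$ carrying the quasi‑periodicity factor $q^{2-N}z^{4-2N}$.

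The crux is then the following elementary fact, which I would prove by the argument principle for $f'/f$ over the boundary of a fundamental annulus $\{r|q|<|z|\le r\}$ (chosen with no zeros of $f$ on the two circles), the quasi‑periodicity relating the contributions of the two circles and shifting the count by $m$: a holomorphic function $f$ on $\mathbb{C}^*$ with $f(qz)=cz^{-m}f(z)$, $m\ge 0$, is either identically zero or has exactly $m$ zeros (with multiplicity) in such an annulus. I apply this with $m=2N-4$ to
\[
  D(z):=\,{\mathcal J}_N(z)-\sum_{i=1}^{N-1}{\mathcal J}_N(a_i)\,\Xi_i(z),
\]
which by the previous steps lies in $\mathcal V$. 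The interpolation property gives $D(a_k)=0$ for $k=1,\dots,N-1$, and symmetry then forces $D(a_k^{-1})=0$ as well; under the genericity hypotheses these are $2N-2$ distinct points modulo $q^{\mathbb{Z}}$. Since $2N-2>2N-4$, a nonzero element of $\mathcal V$ cannot have that many zeros, so $D\equiv 0$, which is precisely \eqref{eq:SS1}; analytic continuation in the $a_i$ then removes the genericity restriction.

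The step I expect to be the main obstacle is the bookkeeping in the first step: the prefactor $z^{N-1-s_1-\cdots-s_{2N}}$ is in general multivalued, so one must check that it is carried identically by $J_N$ and by $h$ — so that ${\mathcal J}_N$ really is single‑valued on $\mathbb{C}^*$ and its shift factor under $z\mapsto qz$ is the honest monomial $q^{2-N}z^{4-2N}$ — and, relatedly, that the genericity hypotheses genuinely make the $2N-2$ vanishing points of $D$ distinct in $\mathbb{C}^*/q^{\mathbb{Z}}$. By contrast, the zero‑counting lemma and the verification of the transformation properties of the $\Xi_i$ are routine.
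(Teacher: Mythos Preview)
Your argument is correct. The paper does not actually give a proof of this lemma: it simply cites \cite[p.~247, Theorem~1.1]{ItoS_2008} and moves on. What you have supplied is a complete, self-contained proof by the standard Lagrange-interpolation method in the $(2N-4)$-dimensional space of holomorphic, $z\leftrightarrow z^{-1}$-symmetric functions on $\mathbb{C}^*$ with multiplier $q^{2-N}z^{4-2N}$ under $z\mapsto qz$. Your computation of the quasi-periodicity of $h(z)$ (hence of $\mathcal{J}_N$) and of the $\Xi_i$ is right, the interpolation property $\Xi_i(a_k)=\delta_{ik}$ is immediate from $\theta(1;q)=0$, and the zero-count $2N-2>2N-4$ kills $D$. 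The single-valuedness worry you raise is real but harmless: the multivalued prefactor $z^{-\sum s_j}$ appears identically in $Q$ (hence in $J_N$) and in $h$, and cancels in the ratio $\mathcal{J}_N=J_N/h$, leaving a single-valued function as the text asserts. This interpolation argument is in fact the method of the cited reference, so you have essentially reconstructed the proof the paper outsources; the advantage of writing it out, as you do, is that the reader sees exactly why $N-1$ terms suffice on the right-hand side.
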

\begin{proof} 
See \cite[p.247, Theorem 1.1]{ItoS_2008}. 
\end{proof}

\begin{remark}
As is explained in \cite{ItoS_2008} the expression given by \eqref{eq:SS1} is much simpler than the original 
expression of Sears and Slater who used the theory of very-well-poised hypergeometric series. In addition 
the expression (\ref{eq:SS1}) can be regarded as a connection formula, i.e. a general solution of the 
$q$-difference equation satisfied by ${\mathcal J}_N(z)$ which can be expressed as a linear combination 
of $N-1$ independent special solutions whose coefficients are $q$-gamma functions. That is why we use 
${\mathcal J}_N(z)$ instead of $_{2N+2}\psi_{2N+2}$ despite the slight difference in their expressions as we 
see in (\ref{eq:2N+2psi}). We will apply formula (\ref{eq:SS1}) in the next proposition in order to see that 
$I_N$ can be written as a sum of $(N-1)$ ${\mathcal J}_N(a_k)$'s.
\end{remark}

\begin{proposition}\label{prop:IN-B}
Under the Condition {\rm (\ref{eq:condition01})} $I_N$ can be written in terms of ${\mathcal J}_N(a_k)$, 
$1\le k\le N-1$, through
\begin{equation}
  I_N = \sum_{i=1}^{N-1}\bigg( R_{i} + \sum_{k=N}^{2N}R_{k}A_{ki} \bigg)\,{\mathcal J}_N(a_i) , 
\label{eq:IN03}
\end{equation} 
where $R_i$ is given by {\rm (\ref{eq:Rk})} and $A_{ki}$ by
\begin{equation}
  A_{ki} = \prod_{\substack{1\le j\le N-1 \\ j\ne i}}\frac{\theta(a_ja_k;q)\theta(a_j/a_k;q)}{\theta(a_ja_i;q)\theta(a_j/a_i;q)} . 
\label{eq:Aki}
\end{equation} 
\end{proposition}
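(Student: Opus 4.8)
The plan is to combine Proposition~\ref{prop:IN-A} with the Sears--Slater transformation of Lemma~\ref{lem:SS}. Proposition~\ref{prop:IN-A} already expresses $I_N$ as $\sum_{k=1}^{2N}R_k\,{\mathcal J}_N(a_k)$, so the only task is to reduce the $2N$ terms down to $N-1$ terms by eliminating ${\mathcal J}_N(a_k)$ for $k=N,\ldots,2N$ (that is, $N+1$ of the functions) in favour of ${\mathcal J}_N(a_1),\ldots,{\mathcal J}_N(a_{N-1})$.

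First I would split the sum in \eqref{eq:IN02} as $I_N = \sum_{i=1}^{N-1}R_i\,{\mathcal J}_N(a_i) + \sum_{k=N}^{2N}R_k\,{\mathcal J}_N(a_k)$. For each $k$ with $N\le k\le 2N$, apply Lemma~\ref{lem:SS} with $z=a_k$: since $\theta(a_j/a_k;q)$ appearing in the denominator of \eqref{eq:SS1} is evaluated at $z=a_k$, and the product runs only over $1\le j\le N-1$ with $j\ne i$, there is no singularity (we are not setting $z$ equal to any of $a_1,\ldots,a_{N-1}$), so \eqref{eq:SS1} specialises cleanly to
\begin{equation*}
  {\mathcal J}_N(a_k) = \sum_{i=1}^{N-1}{\mathcal J}_N(a_i)\prod_{\substack{1\le j\le N-1 \\ j\ne i}}
                      \frac{\theta(a_ja_k;q)\theta(a_j/a_k;q)}{\theta(a_ja_i;q)\theta(a_j/a_i;q)}
  = \sum_{i=1}^{N-1}A_{ki}\,{\mathcal J}_N(a_i),
\end{equation*}
which is exactly the definition \eqref{eq:Aki} of $A_{ki}$. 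Substituting this into the second sum gives $\sum_{k=N}^{2N}R_k\,{\mathcal J}_N(a_k) = \sum_{k=N}^{2N}R_k\sum_{i=1}^{N-1}A_{ki}\,{\mathcal J}_N(a_i) = \sum_{i=1}^{N-1}\bigl(\sum_{k=N}^{2N}R_k A_{ki}\bigr){\mathcal J}_N(a_i)$.

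Adding the two pieces and collecting the coefficient of each ${\mathcal J}_N(a_i)$ yields
\begin{equation*}
  I_N = \sum_{i=1}^{N-1}\Bigl(R_i + \sum_{k=N}^{2N}R_k A_{ki}\Bigr){\mathcal J}_N(a_i),
\end{equation*}
which is precisely \eqref{eq:IN03}. The only points needing care are: (i) checking that Condition \eqref{eq:condition01} is exactly the convergence hypothesis needed both for Proposition~\ref{prop:IN-A} and for the validity of the Sears--Slater transformation at the specialised arguments; and (ii) confirming that interchanging the finite sums over $k$ and $i$ is legitimate (it is, as both are finite). I do not anticipate a genuine obstacle here, since the heavy lifting — the residue computation of $\Res_{z=a_k}P(z)$ in \eqref{eq:ResP}, the vanishing of $I_\varepsilon$ in Lemma~\ref{lem:Ie}, and the transformation formula \eqref{eq:SS1} itself — has already been established; the present proposition is essentially the bookkeeping step that assembles them, and the proof is a short computation.
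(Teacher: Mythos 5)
Your proof is correct and follows exactly the paper's own argument: split the sum in \eqref{eq:IN02}, specialise the Sears--Slater transformation \eqref{eq:SS1} at $z=a_k$ for $N\le k\le 2N$ to obtain \eqref{eq:SS2}, substitute, and collect coefficients of ${\mathcal J}_N(a_i)$. Nothing further is needed.
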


\begin{remark} 
If we write $R_kA_{ki}$ in (\ref{eq:IN03}) explicitly then  
\begin{equation*}
  R_kA_{ki} = \frac{a_k^{N/2-1}\theta(a_k^{-N};q^{N/2})}{(q;q)_\8^2\theta(a_ia_k;q)\theta(a_i/a_k;q)
              \prod_{\substack{1\le j\le N-1 \\ j\ne i}}\theta(a_ja_i;q)\theta(a_j/a_i;q)
              \prod_{\substack{N\le j\le 2N \\ j\ne k}}\theta(a_ja_k;q)\theta(a_j/a_k;q)} .
\label{eq:RkAki}
\end{equation*}
\end{remark}

\begin{proof}
Putting $z=a_k$ in (\ref{eq:SS1}) of Lemma \ref{lem:SS} we have 
\begin{equation}
  {\mathcal J}_N(a_k) = \sum_{i=1}^{N-1}A_{ki}{\mathcal J}_N(a_i) , 
\label{eq:SS2}
\end{equation}
where $A_{ki}$ is defined by (\ref{eq:Aki}). From (\ref{eq:IN02}) and (\ref{eq:SS2}) we obtain 
\begin{align*}
  I_N & = \sum_{i=1}^{N-1}R_{i}\,{\mathcal J}_N(a_i)+\sum_{k=N}^{2N}R_{k}\,{\mathcal J}_N(a_k) ,
\\
      & = \sum_{i=1}^{N-1}R_{i}\,{\mathcal J}_N(a_i)+\sum_{k=N}^{2N}R_{k}\sum_{i=1}^{N-1}A_{ki}{\mathcal J}_N(a_i) ,
\\
      & = \sum_{i=1}^{N-1}{\mathcal J}_N(a_i)\Big( R_{i} + \sum_{k=N}^{2N}R_{k}A_{ki} \Big),
\end{align*}
which completes the proof. 
\end{proof}

Proposition~\ref{prop:IN-B} is the simplest evaluation we can show at present. A natural question is whether the 
coefficient $ R_{i}+\sum_{k=N}^{2N}R_{k}A_{ki} $ is a product of theta functions or not. As we know this is 
indeed a product if $N=2$, i.e. the Askey--Wilson case. As a contribution to an answer to this question we 
would like to provide another proof of the product formula for the Askey-Wilson integral $I_2$ which is not 
known from any other studies, which however does provide some insight for the cases $ I_N $ $ (N\ge 3)$. 
In pursuit of this goal we will establish a relation for theta functions as follows:
\begin{lemma}\label{lem:theta0}
For the theta function $\theta(z;q)$ we have  %the following relation holds: 
\begin{equation}
  \sum_{k=1}^4\frac{\theta(a_k^{-2};q)}{\prod_{\substack{1\le i\le 4 \\ i\ne k}}\theta(a_ia_k;q)\theta(a_i/a_k;q)}
   = \frac{2\,\theta(a_1a_2a_3a_4;q)}{\prod_{1\le i<j\le 4}\theta(a_ia_j;q)}.
\label{eq:theta}
\end{equation}
\end{lemma}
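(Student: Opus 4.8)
The plan is to prove the theta-function identity \eqref{eq:theta} by the standard Liouville-type argument: exhibit both sides as meromorphic functions of one variable, say $a_4$ (with $a_1,a_2,a_3$ fixed), verify that the difference is an elliptic function with no poles, hence constant, and then pin the constant down to zero by evaluating at a convenient point. First I would clear denominators and set
\begin{equation*}
  \Theta(a_4) := \sum_{k=1}^4\frac{\theta(a_k^{-2};q)}{\prod_{i\ne k}\theta(a_ia_k;q)\theta(a_i/a_k;q)}
  - \frac{2\,\theta(a_1a_2a_3a_4;q)}{\prod_{1\le i<j\le 4}\theta(a_ia_j;q)} ,
\end{equation*}
regarded as a function of $z:=a_4\in\mathbb{C}^*$. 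Using the quasi-periodicity $\theta(qz;q)=-z^{-1}\theta(z;q)$ one checks that each of the five summands, and hence $\Theta$, is invariant (or at worst picks up a common, cancelling factor) under $a_4\mapsto qa_4$; this is the routine but essential bookkeeping step, and it is where I expect the main effort to lie, since one must track the $q$-shift weights of $\theta(a_4^{-2};q)$, $\theta(a_ia_4;q)$, $\theta(a_i/a_4;q)$ and $\theta(a_1a_2a_3a_4;q)$ simultaneously and see that the total homogeneity degree matches across all terms.

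Next I would argue that $\Theta(a_4)$ has no poles. The only candidate poles in a fundamental annulus come from the zeros of the various $\theta$-factors in the denominators, i.e. from $a_4\in\{a_1,a_2,a_3,a_1^{-1},a_2^{-1},a_3^{-1}\}$ (and their $q$-translates, handled by the quasi-periodicity just established). At $a_4=a_j$, say $a_4=a_1$, three of the four terms in the first sum develop a simple pole, as does the subtracted term; I would compute the residues of those four contributions at $a_4=a_1$ and show they cancel in pairs (this is precisely the $N=2$ shadow of the Sears--Slater/partial-fractions mechanism used earlier in the paper). Symmetry in $a_1,a_2,a_3,a_4$ then disposes of the remaining potential poles at $a_4=a_2,a_3$; the points $a_4=a_j^{-1}$ are covered either by the $z\to z^{-1}$ symmetry of the configuration or by the same residue computation. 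With no poles and genuine $q$-ellipticity, $\Theta$ is a constant $C$ independent of $a_4$.

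Finally I would evaluate $C$. The cleanest choice is a degeneration: let $a_4\to a_3^{-1}$ (or $a_4\to a_1^{-1}$), so that $\theta(a_3a_4;q)\to\theta(1;q)=0$ appears in a denominator; careful inspection shows the apparent singularity is removable once the cancellations of the previous paragraph are in force, and the surviving limit can be read off directly, yielding $C=0$. Alternatively, since $C$ does not depend on $a_4$, one may specialise $a_4$ so that $a_1a_2a_3a_4=1$, killing the right-hand side's numerator $\theta(1;q)=0$, and then check that the left-hand sum also vanishes by the same residue-cancellation pattern; either route gives $C=0$ and hence the identity. The main obstacle, as noted, is the first step — verifying $q$-ellipticity by matching the quasi-periodicity factors of all five theta-quotients — after which the pole analysis and the constant evaluation are short.
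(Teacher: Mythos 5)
Your overall strategy (one-variable Liouville argument on the difference of the two sides) is viable, but two steps fail as written. First, the quasi-periodicity: using $\theta(qz;q)=-z^{-1}\theta(z;q)$ one finds that under $a_4\mapsto qa_4$ every one of the five terms is multiplied by $a_4^{2}$, not by $1$. So $\Theta$ is \emph{not} $q$-elliptic, and the step ``no poles $+$ ellipticity $\Rightarrow$ constant'' does not apply; a nonzero constant does not satisfy $\Theta(qa_4)=a_4^{2}\Theta(a_4)$. The repair is actually a simplification: a function holomorphic on $\C^*$ with multiplier $a_4^{2}$ has Laurent coefficients obeying $c_{n-2}=q^{n}c_{n}$, which for $|q|<1$ forces all $c_n=0$; so once the poles cancel you get $\Theta\equiv 0$ outright, and no evaluation of a constant is needed. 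This matters because your proposed normalisations are shaky: at $a_1a_2a_3a_4=1$ the vanishing of the left-hand sum is essentially an instance of the identity itself, and the limit $a_4\to a_3^{-1}$ requires computing a finite part, not just observing residue cancellation. Second, the pole inventory is off: at $a_4=a_j$ ($j=1,2,3$) only the terms $k=j$ and $k=4$ of the sum are singular and the right-hand side is regular there (its denominator contains $\theta(a_ja_4;q)$, which vanishes at $a_4=a_j^{-1}$, not at $a_4=a_j$); the three-way cancellations involving the right-hand side occur at $a_4=a_j^{-1}$. Both cancellations do hold by a short computation using $\theta(z^{-1};q)=-z^{-1}\theta(z;q)$, so the gap is repairable, but the bookkeeping must be redone.

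The paper's proof is organised quite differently and is worth comparing. It proves a general structural lemma (Lemma A.3): for any holomorphic $g$ on $\C^*$ the sum $\sum_k g(x_k)/\prod_{i\ne k}\theta(x_ix_k;q)\theta(x_i/x_k;q)$ equals a holomorphic symmetric function divided by $\prod_{i<j}\theta(x_ix_j;q)$; holomorphy of the numerator is obtained not by residue cancellation but by a skew-symmetry/divisibility argument with the antisymmetric kernel $e(x;y)=x^{-1}\theta(xy;q)\theta(x/y;q)$. Specialising $g(x)=\theta(x^{-2};q)$ and $n=4$, the numerator $f$ inherits the quasi-periodicity $f(\ldots,qx_i,\ldots)=-f/(x_1x_2x_3x_4)$ in each variable, and the one-dimensional uniqueness of $\theta$ as the solution of $h(qx)=-h(x)/x$ identifies $f$ as $C\,\theta(x_1x_2x_3x_4;q)$, with $C=2$ fixed by an evaluation. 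That route buys a statement valid for all $N$ (Lemma A.1), which is the point of the appendix: it isolates exactly why only $N=2$ yields a single theta product. Your route, once corrected, is more elementary and self-contained but is specific to the four-variable case.
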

\begin{proof} See Corollary~\ref{cor:theta} in the Appendix.
\end{proof}

If $N=2$ then $R_{i}+\sum_{k=N}^{2N}R_{k}A_{ki}$ is just $R_1+R_2+R_3+R_4$ allowing us to write (\ref{eq:IN03}) as
\begin{equation*}
  I_2 = (R_1+R_2+R_3+R_4){\mathcal J}_2(a_1) .
\end{equation*} 
The expression (\ref{eq:theta}) applies to this situation because $R_k$ can be written as 
\begin{equation*}
  R_k = \frac{\theta(a_k^{-2};q)}{(q;q)_\8^2\prod_{\substack{1\le i\le 4 \\ i\ne k}}\theta(a_ia_k;q)\theta(a_i/a_k;q)} .
\end{equation*} 
From (\ref{eq:theta}) we have
\begin{equation*}
  \sum_{k=1}^4 R_k = \frac{2\,\theta(a_1a_2a_3a_4;q)}{(q;q)_\8^2\prod_{1\le i<j\le 4}\theta(a_ia_j;q)} .
\end{equation*} 
Therefore we obtain a relation between $I_2$ and ${\mathcal J}_2(a_1)$\footnote{Actually ${\mathcal J}_2(a_1)$
is given by Jackson's $_6\varphi_5$ evaluation.}
\begin{equation*}
  I_2 = \frac{2\,\theta(a_1a_2a_3a_4;q)}{(q;q)_\8^2\prod_{1\le i<j\le 4}\theta(a_ia_j;q)}{\mathcal J}_2(a_1) .
\end{equation*} 
Obtaining the relation between $I_2$ and ${\mathcal J}_2(a_1)$ is adequate for our purposes.

\section*{Acknowledgments}
The first author has been supported by the Japan Society for the Promotion of Science‎ KAKENHI Grant Number 
25400118 whilst the second author has been supported by the Australian Research Council.
The second author wishes to acknowledge Mizan Rahman for advice and encouragement and to thank
Filipo Colomo for the translations of an article by Pincherle. Assistance in preparing this manuscript was
provided by Jason Whyte.

\begin{appendix}
\section*{Appendix: Discussion of Proposition \ref{prop:IN-B} and a proof for Lemma \ref{lem:theta0}}\label{Appendix}
\setcounter{section}{1}
\setcounter{equation}{0}
\renewcommand{\thecorollary}{\Alph{section}.\arabic{corollary}}
\renewcommand{\thelemma}{\Alph{section}.\arabic{lemma}}
\renewcommand{\theproposition}{\Alph{section}.\arabic{proposition}}
\renewcommand{\theremark}{\Alph{section}.\arabic{remark}}
\setcounter{lemma}{0}
\setcounter{corollary}{0}
\setcounter{proposition}{0}

In this section we will give a tentative answer to the discussions in \S~\ref{BC1evalIN}. 
Proposition \ref{prop:IN-B} can be rewritten in the following way.
\begin{proposition}\label{prop:IN-C}
Consider a symmetric holomorphic function $f$ of $N+2$ variables ($x_1,x_2,\ldots,x_{N+2}$) on $(\mathbb{C}^*)^{N+2}$ defined 
by the $q$-difference equations
\begin{equation}
  f(\ldots,qx_i,\ldots) = (-1)^{N+1}\frac{f(x_1,x_2,\ldots,x_{N+2})}{(x_1x_2\cdots x_{N+2})x_i^{N-2}} ,
  \quad i=1,2,\ldots,N+2 .
\label{eq:A01}
\end{equation}
Under Condition (\ref{eq:condition01}) we can write $I_N$ in terms of ${\mathcal J}_N(a_k)$, $1\le k\le N-1$,
\begin{equation}
  I_N = \sum_{i=1}^{N-1} C_{i} {\mathcal J}_N(a_i) ,
\label{eq:IN04}
\end{equation}
where the connection coefficient $C_i$ ($i=1,\ldots,N-1$) is given by
\begin{equation}
  C_{i} = \frac{f(a_i,a_N,a_{N+1},\cdots ,a_{2N})}{(q;q)_\8^2\prod_{\substack{1\le j\le N-1 \\ j\ne i}}\theta(a_ja_i;q)\theta(a_j/a_i;q)
          \times \prod_{k=N}^{2N}\theta(a_ia_k;q)\times \prod_{N\le l<m\le 2N}\theta(a_la_m;q)} .
\label{eq:C01}
\end{equation} 
\end{proposition}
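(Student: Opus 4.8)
The statement merely repackages Proposition~\ref{prop:IN-B}, so the plan is to read the function $f$ off the connection coefficients $C_i=R_i+\sum_{k=N}^{2N}R_kA_{ki}$ and then verify its three declared properties. I would simply \emph{define}
\[
 f(a_i,a_N,\dots,a_{2N}):=C_i\cdot(q;q)_\8^2\prod_{\substack{1\le j\le N-1\\ j\ne i}}\theta(a_ja_i;q)\theta(a_j/a_i;q)\cdot\prod_{k=N}^{2N}\theta(a_ia_k;q)\cdot\prod_{N\le l<m\le 2N}\theta(a_la_m;q),
\]
so that \eqref{eq:C01} holds tautologically and \eqref{eq:IN04} becomes exactly Proposition~\ref{prop:IN-B}; the content is that this prescription yields a \emph{single} symmetric, holomorphic function of $N+2$ variables satisfying \eqref{eq:A01}.

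First I would substitute the explicit evaluations \eqref{eq:Rk} of $R_k$ and \eqref{eq:Aki} of $A_{ki}$. The crucial bookkeeping point is that $R_i$ and every $R_kA_{ki}$ with $N\le k\le 2N$ carry the identical factor $\bigl[(q;q)_\8^2\prod_{1\le j\le N-1,\,j\ne i}\theta(a_ja_i;q)\theta(a_j/a_i;q)\bigr]^{-1}$ and otherwise do not involve the ``spectator'' parameters $\{a_j:1\le j\le N-1,\ j\ne i\}$ at all; hence multiplying by the prescribed denominator kills that factor and leaves $f$ a function of $a_i,a_N,\dots,a_{2N}$ alone. Carrying out the surviving cancellations, and writing $b_0=a_i$, $\{b_1,\dots,b_{N+1}\}=\{a_N,\dots,a_{2N}\}$ and $\theta_S:=\prod_{0\le l<m\le N+1}\theta(b_lb_m;q)$, each summand of $f$ collapses to
\[
 \frac{b_p^{N/2-1}\,\theta(b_p^{-N};q^{N/2})\,\theta_S}{\prod_{0\le j\le N+1,\ j\ne p}\theta(b_jb_p;q)\,\theta(b_j/b_p;q)},
\]
with $R_iD_i$ giving the term $p=0$ and $R_kA_{ki}D_i$ the term whose $b_p$ is $a_k$. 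So $f$ is the sum of these $N+2$ expressions, and symmetry is now immediate: any permutation of $b_0,\dots,b_{N+1}$ permutes the summands. Holomorphy on $(\mathbb{C}^{*})^{N+2}$ follows because the only candidate poles lie on the loci $b_j=q^{m}b_p$, where the $p$-th and $j$-th summands have opposite residues and cancel (the partial-fraction mechanism behind the Sears--Slater transformation of Lemma~\ref{lem:SS}), while the zeros of $\theta_S$ mop up the remaining denominators $\theta(b_jb_p;q)$.

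The final and hardest step is to verify the $q$-difference equations \eqref{eq:A01}; by the symmetry just obtained it suffices to treat the single shift $b_0\mapsto qb_0$. The plan is purely computational: apply $\theta(qz;q)=-z^{-1}\theta(z;q)$ to every factor $\theta(b_0b_j;q)$ and $\theta(b_j/b_0;q)$ that occurs in $\theta_S$ and in the denominators, apply the $q^{N/2}$-analogue $\theta(q^{-N}w;q^{N/2})=q^{-3N/2}w^{2}\theta(w;q^{N/2})$ to $\theta(b_0^{-N};q^{N/2})$, and then collect the accumulated monomial and sign prefactors; they should telescope into exactly the factor $(-1)^{N+1}\big/\bigl[(b_0\cdots b_{N+1})\,b_0^{N-2}\bigr]$ demanded by \eqref{eq:A01}. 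Reconciling these prefactors is the crux of the proof — it is the one place where the fractional exponent $N/2-1$ in \eqref{eq:Rk}, the exponent of $h(z)$ from \eqref{eq:JNr}, and the theta products in the prescribed denominator all have to conspire — and once \eqref{eq:A01} is in hand, Proposition~\ref{prop:IN-C} is immediate.
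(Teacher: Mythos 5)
Your proposal is correct and follows the same overall strategy as the paper's: you peel the spectator factor $(q;q)_\8^{-2}\prod_{j\ne i}\theta(a_ja_i;q)^{-1}\theta(a_j/a_i;q)^{-1}$ off $C_i=R_i+\sum_{k}R_kA_{ki}$, recognise that what remains depends only on $a_i,a_N,\ldots,a_{2N}$, and identify $f$ with $\theta_S$ times the theta-sum that the paper calls $R(x_1,\ldots,x_{N+2})$; your bookkeeping here, including the observation that the prescribed denominator is exactly $(q;q)_\8^2$ times the spectator factor times $\theta_S$, matches the paper's reduction of \eqref{eq:C01} to \eqref{eq:R02}. The one point where you genuinely diverge is the proof that $f$ is holomorphic: you argue by residue cancellation on the loci $b_j=q^mb_p$ between the $p$-th and $j$-th summands, whereas the paper (Lemmas \ref{lem:theta} and \ref{lem:theta2}) multiplies through by the skew-symmetric product $\prod_{i<j}e(x_i;x_j)$ with $e(x;y)=x^{-1}\theta(xy;q)\theta(x/y;q)$, obtains a manifestly holomorphic and skew-symmetric expression, and deduces divisibility by $\prod_{i<j}x_j\theta(x_i/x_j;q)$. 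The paper's route avoids any residue computation (and works for an arbitrary holomorphic $g(x_k)$ in the numerator), while yours is more hands-on and would still need the quasi-periodicity bookkeeping to handle the shifted loci $m\ne 0$; both are valid. Finally, your verification of \eqref{eq:A01} is only outlined ("should telescope"), but the outline is sound — the identity $\theta(q^{-N}w;q^{N/2})=q^{-3N/2}w^2\theta(w;q^{N/2})$ is correct, each summand of $R$ picks up the uniform factor $b_0^2$ under $b_0\mapsto qb_0$ while $\theta_S$ contributes $(-1)^{N+1}b_0^{-N}(b_0\cdots b_{N+1})^{-1}$, giving exactly the factor in \eqref{eq:A01} — and this is precisely the computation the paper itself compresses into the phrase "from the quasi-periodicity of $\theta(x;q)$".
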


\begin{remark}
If $N=2$ then $f$ is uniquely determined as 
\begin{equation}
  f(x_1,x_2,x_3,x_4) = 2\theta(x_1x_2x_3x_4;q) .
\label{eq:A02}
\end{equation}
In this case (\ref{eq:A02}) permits us to rewrite (\ref{eq:IN04}) as
\begin{equation*}
  I_2 = \frac{2\,\theta(a_1a_2a_3a_4;q)}{(q;q)_\8^2\prod_{1\le j<k\le 4}\theta(a_ja_k;q)}\,{\mathcal J}_2(a_1) ,
\end{equation*} 
which is known as the relation between the Askey--Wilson integral $I_2$ and Jackson's very-well-poised $_6\varphi_5$ 
sum ${\mathcal J}_2(a_1)$
\begin{equation*}
   I_2 = \frac{2\,(a_1a_2a_3a_4;q)_\8}{(q;q)_\8 \prod_{1\le j<k\le 4}(a_ja_k;q)_\8} ,
\quad
  {\mathcal J}_2(a_i) = \frac{(q;q)_\8\prod_{1\le j<k\le 4}(qa_j^{-1}a_k^{-1};q)_\8}{(qa_1^{-1}a_2^{-1}a_3^{-1}a_4^{-1};q)_\8} .
\end{equation*} 
\end{remark}

\begin{proof}
From Proposition \ref{prop:IN-B} the connection coefficient $C_i$ in (\ref{eq:IN04}) can be written as
\begin{equation}
  C_i = R_{i} + \sum_{k=N}^{2N}R_{k}A_{ki} , 
\label{eq:C02}
\end{equation}
where $R_k$ and $A_{ki}$ are defined by (\ref{eq:Rk}) and (\ref{eq:Aki}) respectively. Without loss of generality
we will prove (\ref{eq:C01}) for the case $i=1$, i.e. we will evaluate $C_1$. In this case, from (\ref{eq:Rk}) 
and (\ref{eq:Aki}), the $R_kA_{ki}$ term in (\ref{eq:C02}) is
\begin{equation*}
  R_kA_{k1} = \frac{a_k^{N/2-1}\theta(a_k^{-N};q^{N/2})}{(q;q)_\8^2\theta(a_1a_k;q)\theta(a_1/a_k;q)
              \prod_{j=2}^{N-1}\theta(a_ja_1;q)\theta(a_j/a_1;q)
              \prod_{\substack{N\le j\le 2N \\ j\ne k}}\theta(a_ja_k;q)\theta(a_j/a_k;q)} .
\end{equation*} 
Using this and (\ref{eq:Rk}) in (\ref{eq:C02}) we obtain the explicit expression of $C_1$ as 
\begin{multline}
  C_1 = \frac{1}{(q;q)_\8^2\prod_{j=2}^{N-1}\theta(a_ja_1;q)\theta(a_j/a_1;q)}
\\ \times
        \Bigg[  \frac{a_1^{N/2-1}\theta(a_1^{-N};q^{N/2})}{\prod_{j=N}^{2N}\theta(a_ja_1;q)\theta(a_j/a_1;q)}  
                + \sum_{k=N}^{2N}\frac{a_k^{N/2-1}\theta(a_k^{-N};q^{N/2})}{\theta(a_1a_k;q)\theta(a_1/a_k;q)
                  \prod_{\substack{N\le j\le 2N \\ j\ne k}}\theta(a_ja_k;q)\theta(a_j/a_k;q)}
        \Bigg].
\label{eq:C03}
\end{multline}
Defining the function of $N+2$ variables $R$ as %  $R(x_1,x_2,\ldots,x_{N+2})$ 
\begin{equation*}
  R(x_1,x_2,\ldots,x_{N+2}) := \sum_{k=1}^{N+2}\frac{x_k^{N/2-1}\theta(x_k^{-N};q^{N/2})}
                                                    {\prod_{\substack{1\le i\le N+2 \\ i\ne k}}\theta(x_ix_k;q)\theta(x_i/x_k;q)},
\label{eq:R01}
\end{equation*}
allows us to rewrite (\ref{eq:C03}) as
\begin{equation*}
  C_1 = \frac{R(a_1,a_N,a_{N+1},\ldots,a_{2N})}{(q;q)_\8^2\prod_{j=2}^{N-1}\theta(a_ja_1;q)\theta(a_j/a_1;q)} .
\end{equation*} 
Therefore (\ref{eq:C01}) for $i=1$ is proved by confirming that
\begin{equation}
  R(x_1,x_2,\ldots,x_{N+2}) = \frac{f(x_1,x_2,\ldots,x_{N+2})}{\prod_{1\le i<j\le N+2}\theta(x_ix_j;q)} ,
\label{eq:R02}
\end{equation}
where the function $f(x_1,x_2,\ldots,x_{N+2})$ satisfies (\ref{eq:A01}). 
\end{proof}

We restate (\ref{eq:R02}) as in Lemma \ref{lem:theta} below.
\begin{lemma}\label{lem:theta}
There exists a symmetric holomorphic function $f$ %$f(x_1,x_2,\ldots,x_{N+2})$ 
on $(\mathbb{C}^*)^{N+2}$ such that 
\begin{equation*}
  \sum_{k=1}^{N+2}\frac{x_k^{N/2-1}\theta(x_k^{-N};q^{N/2})}{\prod_{\substack{1\le i\le N+2 \\ i\ne k}}\theta(x_ix_k;q)\theta(x_i/x_k;q)}
      = \frac{f(x_1,x_2,\ldots, x_{N+2})}{\prod_{1\le i<j\le N+2}\theta(x_ix_j;q)} . 
\label{eq:theta02}
\end{equation*}
In particular from the quasi-periodicity of $\theta(x;q)$ the function $f(x_1,x_2,\ldots,x_{N+2})$ satisfies {\rm (\ref{eq:A01})}.
\end{lemma}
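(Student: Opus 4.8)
The plan is to take $f$ to be the candidate obtained by clearing denominators and then to prove that it is holomorphic on $(\mathbb{C}^*)^{N+2}$; symmetry will then come for free. Multiplying the stated identity through by $\prod_{1\le i<j\le N+2}\theta(x_ix_j;q)$ and using $\prod_{i<j}\theta(x_ix_j;q)/\prod_{i\ne k}\theta(x_ix_k;q)=\prod_{i<j,\,k\notin\{i,j\}}\theta(x_ix_j;q)$ leads one to set
\begin{equation*}
  f(x_1,\ldots,x_{N+2}):=\sum_{k=1}^{N+2}x_k^{N/2-1}\theta(x_k^{-N};q^{N/2})\,
     \frac{\prod_{\substack{1\le i<j\le N+2\\ k\notin\{i,j\}}}\theta(x_ix_j;q)}{\prod_{i\ne k}\theta(x_i/x_k;q)} .
\end{equation*}
Any permutation of the arguments merely permutes the $N+2$ summands, so $f$ is symmetric; and once $f$ is known to be holomorphic, dividing back by $\prod_{i<j}\theta(x_ix_j;q)$ recovers exactly the function demanded by the Lemma, while the $q$-difference equations \eqref{eq:A01} are obtained by reading off the quasi-periodicity multiplier of the left-hand side of that identity under $x_i\mapsto qx_i$ from $\theta(qz;q)=-z^{-1}\theta(z;q)$.

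Each summand of $f$ is meromorphic with polar set contained in $\bigcup_{a<b,\,m\in\mathbb{Z}}\{x_a=q^mx_b\}$, so these are the only candidate polar hypersurfaces of $f$. Along a fixed component $\{x_1=q^mx_2\}$ precisely two summands are singular, the $k=1$ term (through its factor $1/\theta(x_2/x_1;q)$) and the $k=2$ term (through $1/\theta(x_1/x_2;q)$), each with a simple pole, all the others being regular there; by the symmetry of $f$ it is enough to treat this one family of divisors. Everything thus reduces to showing that the residues of the $k=1$ and $k=2$ summands along $\{x_1=q^mx_2\}$ cancel, since then $f$ has empty polar divisor and is holomorphic.

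I would first dispose of $m=0$, where the cancellation is transparent. Writing $x_1=x_2e^{\epsilon}$ and letting $\epsilon\to0$: the two theta-numerators of the $k=1$ and $k=2$ summands take the same value at $x_1=x_2$ (interchanging the labels $1$ and $2$ is a value-preserving bijection of index pairs), the two denominator products $\prod_{i\ge3}\theta(x_i/x_k;q)$ agree there, and the prefactor $x_k^{N/2-1}\theta(x_k^{-N};q^{N/2})$ takes the same value for $k=1,2$; since $\theta(z;q)\sim(1-z)(q;q)_\infty^2$ as $z\to1$, the remaining factors $\theta(x_2/x_1;q)$ and $\theta(x_1/x_2;q)$ have opposite-sign leading terms in $\epsilon$, so the two simple poles are equal and opposite and cancel. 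For $m\ne0$ I would reduce to $m=0$. From $\theta(q^\ell z;q)=(-1)^\ell q^{-\binom{\ell}{2}}z^{-\ell}\theta(z;q)$ and $q^{-N}=(q^{N/2})^{-2}$ one gets the clean quasi-periodicity $\Theta_N(qx)=q^{-(N+1)}x^{-2N}\Theta_N(x)$ for $\Theta_N(x):=x^{N/2-1}\theta(x^{-N};q^{N/2})$; feeding this, together with the corresponding transformations of the $\theta(x_1x_i;q)$ and $\theta(x_i/x_1;q)$ factors, into the two singular summands shows that, regarded as functions of $x_1$ with the other arguments frozen, both are multiplied by one and the same nowhere-vanishing monomial under $x_1\mapsto qx_1$. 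Their sum $S(x_1)$ therefore satisfies $S(qx_1)=\mu(x_1)S(x_1)$ with $\mu$ a nonzero monomial, so regularity of $S$ at $x_1=x_2$ propagates to every $x_1=q^mx_2$, completing the proof that $f$ is holomorphic on $(\mathbb{C}^*)^{N+2}$.

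The step demanding the most care is this last multiplier match: the $k=1$ summand acquires its multiplier from the prefactor $\Theta_N(x_1)$ and the $N+1$ denominators $\theta(x_i/x_1;q)$, whereas the $k=2$ summand acquires it from the $N-1$ numerators $\theta(x_1x_i;q)$ and the single denominator $\theta(x_1/x_2;q)$; that the two monomials coincide is exactly where the exponent $N/2-1$ in the prefactor and the base $q^{N/2}$ of the inner theta function are forced, and it is the only non-routine point in the argument. (For $N$ odd the prefactor carries $x_k^{1/2}$, so $f$ is most naturally regarded as living on the evident double cover; this does not affect the residue analysis.)
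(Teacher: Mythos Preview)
Your proof is correct and takes a somewhat different route from the paper's. The paper derives the lemma from a more general statement (its Lemma~A.2, for an arbitrary holomorphic $g$ in place of $\Theta_N(x)=x^{N/2-1}\theta(x^{-N};q^{N/2})$): one multiplies by the full antisymmetric product $\prod_{i<j}e(x_i;x_j)$ with $e(x;y)=x^{-1}\theta(xy;q)\theta(x/y;q)$, so that every denominator cancels and one is left with a manifestly holomorphic, skew-symmetric function which therefore vanishes on $\{x_i=x_j\}$; from this the paper asserts divisibility by each $\theta(x_i/x_j;q)$. You instead clear only the $\theta(x_ix_j;q)$ factors and verify directly that the two simple poles along each $\{x_a=q^m x_b\}$ cancel, handling $m=0$ by the obvious symmetry and reducing $m\ne 0$ to $m=0$ via the common quasi-periodicity multiplier. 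The paper's antisymmetry trick delivers the $m=0$ vanishing for free, but the passage from ``vanishes at $x_i=x_j$'' to ``divisible by $\theta(x_i/x_j;q)$'' in fact needs exactly the quasi-periodicity input you make explicit---and this is where the specific form of $\Theta_N$ matters: your multiplier computation shows that $\Theta_N(qx)=q^{-(N+1)}x^{-2N}\Theta_N(x)$ is precisely the condition for the two singular summands to transform identically under $x_1\mapsto qx_1$, whereas for a general $g$ (e.g.\ $g\equiv 1$) the multipliers differ and the cancellation at $m\ne 0$ fails. So your approach is more hands-on but has the virtue of isolating exactly where the hypothesis on the prefactor enters.
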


In order to prove this lemma we will start from a slightly more general setting.
\begin{lemma}\label{lem:theta2}
Let $n$ be an integer satisfying $ n>2 $ and $g$ be an arbitrary holomorphic function of $x\in \mathbb{C}^*$. Then there exists 
a holomorphic function $h(x_1,x_2,\ldots,x_n)$ on $(\mathbb{C}^*)^n$ such that 
\begin{equation}
  \sum_{k=1}^{n}\frac{g(x_k)}{\prod_{\substack{1\le i\le n \\ i\ne k}}\theta(x_ix_k;q)\theta(x_i/x_k;q)}
   = \frac{h(x_1,x_2,\ldots, x_{n})}{\prod_{1\le i<j\le n}\theta(x_ix_j;q)} . 
\label{eq:theta03}
\end{equation}
\end{lemma}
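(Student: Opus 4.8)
The plan is to clear the theta--product in the denominator and show that what remains has no poles. Write $S$ for the left--hand side of \eqref{eq:theta03} and set
\begin{equation*}
  h(x_1,\ldots,x_n) := \Big(\prod_{1\le i<j\le n}\theta(x_ix_j;q)\Big)\,S(x_1,\ldots,x_n) .
\end{equation*}
Since $g$ and $\theta(\cdot;q)$ are holomorphic on $\mathbb{C}^*$, $h$ is a priori meromorphic on $(\mathbb{C}^*)^n$, and the whole content of the lemma is that $h$ is in fact holomorphic. First I would simplify each summand: in the $k$-th term of $S$ the factors $\theta(x_ix_k;q)$ with $i\ne k$ are precisely the factors of $\prod_{i<j}\theta(x_ix_j;q)$ that contain the index $k$, so they cancel and
\begin{equation*}
  h=\sum_{k=1}^{n} g(x_k)\,
      \frac{\prod_{\substack{1\le i<j\le n\\ i,j\ne k}}\theta(x_ix_j;q)}
           {\prod_{\substack{1\le i\le n\\ i\ne k}}\theta(x_i/x_k;q)} .
\end{equation*}
The numerator of each summand is holomorphic, so the only possible poles of $h$ lie along the divisors $x_a=q^{m}x_b$ ($a\ne b$, $m\in\mathbb{Z}$), arising from $\theta(x_b/x_a;q)$ in the $a$-th summand and $\theta(x_a/x_b;q)$ in the $b$-th.

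Because permuting $x_1,\ldots,x_n$ permutes the summands, $h$ is symmetric, so it suffices to check regularity along a single divisor, which I take to be $x_1=q^{m}x_2$; along it only the $k=1$ and $k=2$ summands are singular for generic values of the remaining variables. The task then reduces to computing those two residues with respect to $x_1$ and verifying that they sum to zero. The computation rests on three elementary facts about $\theta(z;q)$: it has simple zeros exactly at $z\in q^{\mathbb{Z}}$; it satisfies the inversion rule $\theta(z^{-1};q)=-z^{-1}\theta(z;q)$, which identifies the two singular factors $\theta(x_1/x_2;q)$ and $\theta(x_2/x_1;q)$; and it satisfies the quasi-periodicity $\theta(qz;q)=-z^{-1}\theta(z;q)$, which transports the remaining numerator and denominator factors (all evaluated at $q^{\pm m}$-shifted arguments) back to unshifted positions. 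On the principal diagonal $m=0$ the cancellation is immediate \emph{for every holomorphic} $g$: there $g(x_1)=g(x_2)$, and after the inversion rule the surviving theta--factors of the two summands coincide, so the residues are equal and opposite. This already settles all divisors $x_a=x_b$.

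The main obstacle is the cancellation along the $q$-shifted diagonals $x_1=q^{m}x_2$ with $m\ne0$. Here the two residues carry $g(x_2)$ and $g(q^{m}x_2)$ \emph{separately}, and after pushing every quasi-periodicity factor through, one finds that their cancellation is equivalent to the shift relation
\begin{equation*}
  g(q^{m}x_2)=q^{-(n-3)\binom{m}{2}-(n-1)\binom{m+1}{2}}\,x_2^{-2m(n-2)}\,g(x_2) ,
\end{equation*}
so that these divisors are reduced to the principal one exactly by the quasi-periodicity carried by $\theta(\cdot;q)$. This is the delicate step to carry out explicitly, and the only place where anything beyond holomorphy of $g$ is used; for the weight--data function $g(x)=x^{N/2-1}\theta(x^{-N};q^{N/2})$ relevant to Lemma~\ref{lem:theta} the relation holds identically, as one checks from $\theta(q^{-N}w;q^{N/2})=q^{-3N/2}w^{2}\theta(w;q^{N/2})$. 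Once all residues are seen to vanish in pairs, $h$ has no poles on $(\mathbb{C}^*)^n$ and is therefore holomorphic, the symmetry of $h$ being inherited from that of $S$ and of $\prod_{i<j}\theta(x_ix_j;q)$.
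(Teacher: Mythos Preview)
Your approach and the paper's are at bottom the same idea---clear the $\theta(x_ix_j)$ factors and show the remainder is pole-free---but you have carried it out more carefully and in doing so uncovered a genuine defect in the lemma as stated. The paper rewrites the cleared expression via the antisymmetric form $e(x;y):=x^{-1}\theta(xy;q)\theta(x/y;q)$, obtaining the manifestly holomorphic, skew-symmetric sum
\[
  L=\sum_{k=1}^{n}(-1)^{n-k}x_kg(x_k)\prod_{\substack{i<j\\ i,j\ne k}}e(x_i;x_j),
\]
observes that skew-symmetry forces $L$ to vanish on each principal diagonal $x_i=x_j$, and then asserts divisibility by $\theta(x_i/x_j;q)$. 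That last step is exactly where your analysis bites: divisibility by $\theta(x_i/x_j;q)$ requires vanishing on \emph{every} $q$-shifted diagonal $x_i=q^{m}x_j$, not just $m=0$, and your residue computation shows this is equivalent to the quasi-periodicity
\[
  g(qx)=q^{-(n-1)}x^{-2(n-2)}g(x),
\]
which an arbitrary holomorphic $g$ need not satisfy. Indeed $g\equiv1$, $n=3$ already gives a counterexample: one checks $L|_{x_1=qx_2}=(qx_2-q^{-1}x_2^{-1})\,e(x_2;x_3)\ne0$.

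So the gap is in the paper, not in your argument. What you have written is the correct proof of the corrected statement: with the added hypothesis $g(qx)=q^{-(n-1)}x^{-2(n-2)}g(x)$ your residue cancellation on the shifted diagonals goes through, and the only application the paper makes (Lemma~\ref{lem:theta}, with $g(x)=x^{N/2-1}\theta(x^{-N};q^{N/2})$ and $n=N+2$) satisfies this hypothesis, as your check via $\theta(q^{-N}w;q^{N/2})=q^{-3N/2}w^{2}\theta(w;q^{N/2})$ confirms. In short: your proposal is the right repair; the paper's skew-symmetry argument is the same mechanism but stops one step short.
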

\begin{proof}  
In order to see the symmetry of permutations of variables we use $ e(x;y):=x^{-1}\theta(xy;q)\theta(x/y;q) $
which has the property that $ e(x;y)=-e(y;x) $. Then Eq. (\ref{eq:theta03}) may be rewritten as 
\begin{equation}
  \prod_{1\le i<j\le n}e(x_i;x_j)\sum_{k=1}^n\frac{x_kg(x_k)}{\prod_{\substack{1\le l\le n \\ l\ne k}}e(x_l;x_k)}
       = \frac{h(x_1,x_2,\ldots,x_n)}{(x_1x_2\cdots x_n)^{n-2}}\prod_{1\le i<j\le n}x_j\theta(x_i/x_j;q). 
\label{eq:theta04}
\end{equation}
This equivalence means showing that if \eqref{eq:theta04} holds then also (\ref{eq:theta03}) holds.
The left-hand side of (\ref{eq:theta04}) is already known to be holomorphic because the factor 
$\prod_{\substack{1\le l\le n \\ l\ne k}}e(x_l;x_k)$ in the denominators of terms from the sum are canceled out by the 
common factor $\prod_{1\le i<j\le n}e(x_i;x_j)$. Moreover, since the left-hand side is skew symmetric with 
respect to permutations of variables, if we put $x_i=x_j$, then the left-hand side vanishes. This means the 
left-hand side of (\ref{eq:theta04}) is divisible by $x_j\theta(x_i/x_j;q)$, so is written as
\begin{equation*}
  h'(x_1,\ldots,x_n)\prod_{1\le i<j\le n}x_j\theta(x_i/x_j;q) ,
\end{equation*} 
where $ h'(x_1,\ldots,x_n) $ is some symmetric holomorphic function on $(\mathbb{C}^*)^n$. Therefore, defining 
$h(x_1,\ldots,x_n) := (x_1\cdots x_n)^{n-2}h'(x_1,\ldots,x_n)$ gives the right-hand side of (\ref{eq:theta04}). 
\end{proof}

\begin{proof}[Proof of Lemma \ref{lem:theta}.] 
Setting $n=N+2$, and $g(x)=x^{N/2-1}\theta(x^{-N};q^{N/2})$ in Lemma \ref{lem:theta2}, 
allows us to obtain $h(x_1,x_2,\ldots, x_{n})$ as $f(x_1,x_2,\ldots,x_{N+2})$ of Lemma \ref{lem:theta}.
\end{proof}

In general if a holomorphic function $g$ on $\mathbb{C}^*$ satisfies $g(qx)=-g(x)/x$ then $g$ is uniquely 
determined by $ g(x) = C \theta(x;q) $ for some constant $C$. Thus if the symmetric holomorphic function 
$f(x_1,x_2,\ldots,x_n)$ on $(\mathbb{C}^*)^n$ satisfies 
\begin{equation}
   f(\ldots,qx_i,\ldots) = -\frac{f(x_1,x_2,\ldots,x_n)}{x_1x_2\cdots x_{n}} ,
\label{eq:A03}
\end{equation}
then $f(x_1,x_2,\ldots,x_n)$ is equal to $\theta(x_1x_2\cdots x_n)$ up to constant. Now, turning to Equation 
(\ref{eq:A01}) only the $N=2$ case satisfies Equation \eqref{eq:A03}. Therefore Lemma \ref{lem:theta} in the case
$N=2$ implies the following Corollary.
\begin{corollary}\label{cor:theta}
We have the identity
\begin{equation}
  \sum_{k=1}^4\frac{\theta(x_k^{-2};q)}{\prod_{\substack{1\le i\le 4 \\ i\ne k}}\theta(x_ix_k;q)\theta(x_i/x_k;q)}
  = 2\,\frac{\theta(x_1x_2x_3x_4;q)}{\prod_{1\le i<j\le 4}\theta(x_ix_j;q)} .
\label{eq:theta2}
\end{equation}
(The constant $2$ in (\ref{eq:theta2}) can be fixed by an appropriate evaluation).
\end{corollary}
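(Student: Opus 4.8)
The plan is to read the identity off from Lemma~\ref{lem:theta} and then fix the single scalar that argument leaves open. First I would specialise Lemma~\ref{lem:theta} to $N=2$: then $x_k^{N/2-1}=1$ and $\theta(x_k^{-N};q^{N/2})=\theta(x_k^{-2};q)$, so (via Lemma~\ref{lem:theta2}) the left-hand side of \eqref{eq:theta2} equals $f(x_1,x_2,x_3,x_4)/\prod_{1\le i<j\le 4}\theta(x_ix_j;q)$ for a symmetric holomorphic $f$ on $(\C^*)^4$ obeying \eqref{eq:A01}, which at $N=2$ is precisely \eqref{eq:A03} with $n=4$, i.e. $f(\ldots,qx_i,\ldots)=-f(x_1,x_2,x_3,x_4)/(x_1x_2x_3x_4)$. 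By the uniqueness fact recalled just before the Corollary — a holomorphic $g$ on $\C^*$ with $g(qx)=-g(x)/x$ equals $C\,\theta(x;q)$ — applied after freezing $x_2,x_3,x_4$ and setting $g(u)=f(u/(x_2x_3x_4),x_2,x_3,x_4)$, one obtains $f(x_1,x_2,x_3,x_4)=C\,\theta(x_1x_2x_3x_4;q)$, and the symmetry of $f$ forces $C$ to be a genuine constant. Thus \eqref{eq:theta2} holds up to the scalar $C$, and everything reduces to proving $C=2$.

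For the normalisation I would compare the residues of the two sides, viewed as functions of $x_1$, at the coalescence $x_1=x_2^{-1}$. Exactly one factor of $\prod_{1\le i<j\le 4}\theta(x_ix_j;q)$ — namely $\theta(x_1x_2;q)$ — has a simple zero there, so the right-hand side has a simple pole in $x_1$; on the left-hand side only the $k=1$ and $k=2$ summands are singular at $x_1=x_2^{-1}$, the $k=3,4$ summands staying finite for generic $x_3,x_4$. Using $\theta(z;q)\sim(1-z)(q;q)_\8^2$ as $z\to1$ together with the inversion $\theta(x^{-1};q)=-x^{-1}\theta(x;q)$, I would compute: on the right the residue equals $-C/\big(x_2(q;q)_\8^2\,\theta(x_2x_3;q)\theta(x_3/x_2;q)\theta(x_2x_4;q)\theta(x_4/x_2;q)\big)$ once $\theta(x_1x_2x_3x_4;q)\to\theta(x_3x_4;q)$ cancels the $\theta(x_3x_4;q)$ in the denominator; and each of the two singular left-hand summands, after the numerator factor ($\theta(x_2^2;q)$ for $k=1$, $\theta(x_2^{-2};q)$ for $k=2$) cancels the matching $\theta(x_2/x_1;q)$ or $\theta(x_1/x_2;q)$ in its denominator, contributes exactly $-1/\big(x_2(q;q)_\8^2\,\theta(x_2x_3;q)\theta(x_3/x_2;q)\theta(x_2x_4;q)\theta(x_4/x_2;q)\big)$. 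Equating the residues gives $C=2$; since the difference of the two sides of \eqref{eq:theta2} is then a meromorphic, symmetric, $q$-quasi-periodic function with no poles left, it vanishes identically. As a cross-check one can instead let $q\to0^+$, where $\theta(z;q)\to1-z$ and \eqref{eq:theta2} degenerates to an elementary rational partial-fraction identity in $x_1$; the $\theta$-residue argument has the merit of pinning $C=2$ uniformly in $q$.

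The structural step is essentially free given Lemmas~\ref{lem:theta2} and \ref{lem:theta}, so the only genuine work — and the only place a slip could occur — is the residue comparison: one must keep track of the signs and of the powers of $(q;q)_\8$, and must invoke $\theta(x^{-1};q)=-x^{-1}\theta(x;q)$ at the right moments so that the stray numerator thetas cancel, in order to land on the constant $2$ rather than some other scalar.
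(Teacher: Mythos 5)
Your argument is correct and follows essentially the same route as the paper: specialise Lemma~\ref{lem:theta} (via Lemma~\ref{lem:theta2}) to $N=2$, note that \eqref{eq:A01} reduces to \eqref{eq:A03}, and invoke the uniqueness of holomorphic solutions of $g(qx)=-g(x)/x$ to conclude $f=C\,\theta(x_1x_2x_3x_4;q)$. Your residue comparison at $x_1=x_2^{-1}$ is sound (the $k=1,2$ summands each contribute $-1/\bigl(x_2(q;q)_\8^2\,\theta(x_2x_3;q)\theta(x_3/x_2;q)\theta(x_2x_4;q)\theta(x_4/x_2;q)\bigr)$, matching $-C$ times the same quantity on the right) and in fact supplies the normalisation $C=2$ that the paper only asserts ``can be fixed by an appropriate evaluation.''
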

At present we can only confirm that the coefficient $C_i$ is simple for the case where $N=2$, which leads us to the 
problem: determine $f(x_1,\ldots,x_{N+2})$ in Lemma \ref{lem:theta} explicitly by theta functions possessing higher 
degree, i.e. determine the symmetric holomorphic function $f(x_1,\ldots,x_{N+2})$ satisfying {\rm (\ref{eq:A01})} 
explicitly by theta functions.
\end{appendix}

\section*{References}
\bibliographystyle{elsarticle-harv}
\bibliography{moment,random_matrices,nonlinear,qTheory,integrable}

\end{document}